\DeclareSymbolFont{cyrletters}{OT2}{wncyr}{m}{n}
\DeclareMathSymbol{\Sha}{\mathalpha}{cyrletters}{"58}
\DeclareFontFamily{OT1}{rsfs}{}
\DeclareFontShape{OT1}{rsfs}{n}{it}{<-> rsfs10}{}
\DeclareMathAlphabet{\mathscr}{OT1}{rsfs}{n}{it}
\theoremstyle{theorem}
\numberwithin{equation}{subsection}
\newtheorem{thmx}{\text{Theorem}}
\newtheorem{theorem}[subsubsection]{Theorem}
\newtheorem{lem}[subsubsection]{Lemma}
\newtheorem{lemma}[subsubsection]{Lemma}
\newtheorem{cor}[subsubsection]{Corollary}
\newtheorem{prop}[subsubsection]{Proposition}
\numberwithin{equation}{subsection}
\theoremstyle{definition}
\newtheorem{remark}[subsubsection]{Remark}
\theoremstyle{remark}
\newcommand{\ZZ}{\mathbb Z}
\newcommand{\mZ}{\mathbb{Z}}
\newcommand{\mQ}{\mathbb{Q}}
\newcommand{\mF}{\mathbb{F}}
\newcommand{\mP}{\mathbf{P}}
\renewcommand{\tilde}{\widetilde}
\newcommand{\sE}{{\mathscr E}}
\newcommand{\sF}{{\mathscr F}}
\newcommand{\sH}{{\mathscr H}}
\newcommand{\sL}{{\mathscr L}}
\newcommand{\sN}{{\mathscr N}}
\newcommand{\sO}{{\mathscr O}}
\newcommand{\id}{\operatorname{id}}
\newcommand{\cdef}[1]{\textsf{\textit{#1}}}
\newcommand{\Sym}{\operatorname {Sym}}
\newcommand{\Hom}{\operatorname{Hom}}
\def\Spec{\operatorname{Spec}}
\def\cchar{\operatorname{char}}
\def\int{\operatorname{int}}
\def\div{\operatorname{div}}
\newcommand{\PP}{\mathbf P}
\def\cX{\mathcal{X}}
\def\cY{\mathcal{Y}}
\newcommand{\Arg}{\rule{1ex}{1pt}}
\newcommand{\ux}{\underline{x}}
\newcommand{\Spf}{\operatorname{Spf}}
\DeclareMathOperator{\alg}{alg}
\DeclareMathOperator{\AffProl}{\mathsf{AffProl}}
\DeclareMathOperator{\Prol}{\mathsf{Prol}}
\def\ux{\underline x}
\DeclareMathOperator{\tors}{tors}
\DeclareMathOperator{\CH}{CH}
\DeclareMathOperator{\FrSch}{FrSch}
\DeclareMathOperator{\RSpec}{\mathbf{Spec}}
\def\Prol{\mathsf{Prol}}
\DeclareMathOperator{\FT}{FT}
\DeclareSymbolFontAlphabet{\mathbb}{AMSb} 
\DeclareSymbolFontAlphabet{\mathbbl}{bbold}
\colorlet{DG}{green!50!black}
\colorlet{DO}{orange!50!black}
\colorlet{DR}{red!50!black}
\colorlet{DB}{blue!50!black}
\colorlet{DY}{yellow!50!black}
\newcommand{\nptors}{\operatorname{non-}p\operatorname{-tors}}
\def\bG{\mathbf{G}}
\def\div{\textrm{div}}
\begin{document}
\title{Higher dimensional geometry of $p$-jets}

\author{Lance Edward Miller}
\address{Lance Edward Miller \\
	Department of Mathematical Sciences\\
	University of Arkansas \\
	Fayetteville, AR 72701, USA}
\email{lem016@uark.edu}

\author{Jackson S. Morrow}
\address{Jackson S. Morrow \\
	Department of Mathematics\\
	University of North Texas \\
	Denton, TX 76203, USA}
\email{jackson.morrow@unt.edu}

\begin{abstract}
In this work, we prove a quantitative version of the prime-to-$p$ Manin--Mumford conjecture for varieties with ample cotangent bundle. 

 More precisely, let $A$ be an abelian variety defined over a number field $F$, and let $X$ be a smooth projective subvariety of $A$ with ample cotangent bundle. 
We prove that for every prime $p\gg 0$, the intersection of $X(F^{\text{alg}})$ and the geometric prime-to-$p$ torsion of $A$ is finite and explicitly bounded by a summation involving cycle classes in the Chow ring of the reduction of $X$ modulo $p$.  
This result is a higher dimensional analogue of Buium's quantitative Manin--Mumford for curves.  

Our proof follows a similar outline to Buium's in that it heavily relies on his theory of arithmetic jet spaces. 
In this context, we prove that the special fiber of the arithmetic jet space associated to a model of $X$ is affine as a scheme over $\mathbb{F}_p^{\text{alg}}$.  As an application of our results,  we use a result of Debarre to prove that when $X$ is $\mathbb{Q}^{\text{alg}}$-isomorphic to a complete intersection of $c > \dim(A)/2$ many general hypersurfaces of $A_{\mathbb{Q}^{\text{alg}}}$ of sufficiently large degree, the intersection of $X(F^{\text{alg}})$ and the geometric prime-to-$p$ torsion of $A$ is bounded by a polynomial that depends only on $p$, the dimension of the ambient abelian variety, and intersection numbers of certain products of the hypersurfaces. 
\end{abstract}

\date{\today}

\subjclass
{\href{https://mathscinet.ams.org/mathscinet/msc/msc2020.html?t=&s=14G05&btn=Search&ls=s}{14G05} 
(\href{https://mathscinet.ams.org/mathscinet/msc/msc2020.html?t=14G05&s=14G20&btn=Search&ls=Ctt}{14G20},  
\href{https://mathscinet.ams.org/mathscinet/msc/msc2020.html?t=14G20&s=12H25&btn=Search&ls=Ct}{12H25},  
\href{https://mathscinet.ams.org/mathscinet/msc/msc2020.html?t=12H25&s=14G45&btn=Search&ls=Ct}{14G45})} 

\maketitle

\section{Introduction}

The goal of this work is to prove the following theorem.

\begin{thmx}\label{thm:explicit_unramified_MM}
Let $F$ be a number field, $A$ an abelian $F$-variety of dimension $n$, and $X$ a smooth $F$-subvariety of dimension $d$ in $A$ such that the cotangent bundle $\Omega_X^1$ of $X$ is ample. For every $p\gg 0$ of good reduction for both $X$ and $A$, 
\[
\# (X(F^{\alg}) \cap A(F^{\alg})[\nptors]) \leq p^{3n} 3^{n} n!  \left( \sum_{i=0}^d  {2d \choose d + i} \deg((-1)^is_i(F^{*}_{X_0}\Omega_{X_0}^1)\cdot \sO_{X_0}(3\Theta_0)^{d-i})\right)
\]
where 
$A(F^{\alg})[\nptors]$ denotes the prime-to-$p$ torsion in $A(F^{\alg})$, $X_0$ and $A_0$ denote the reduction of $X$ and $A$ modulo $p$, respectively, $(-1)^is_i(F^{*}_{X_0}\Omega_{X_0}^1)$ is the $i$-th Segre class of the Frobenius pullback of the cotangent bundle of $X_0$, and $\Theta_0$ is a $\Theta$-divisor on the abelian variety $A_0$. 
\end{thmx}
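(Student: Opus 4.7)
The plan is to follow Buium's strategy for the prime-to-$p$ Manin--Mumford conjecture, adapted to higher-dimensional subvarieties $X \subset A$ with ample cotangent bundle, and to extract the quantitative bound by carrying out an explicit intersection-theoretic computation on the special fiber of the arithmetic jet space of $X$. I would fix an embedding $F^{\alg} \hookrightarrow \overline{\QQ}_p$ and set $R = W(\overline{\mathbb{F}}_p)$. Since $p$ is a prime of good reduction, the prime-to-$p$ torsion in $A(F^{\alg})$ lifts canonically to $A(R)$ (such points reduce injectively into $A_0(\overline{\mathbb{F}}_p)$ and lift uniquely by Hensel), so the set to be bounded is identified with a subset of $X(R) \cap A(R)[\nptors]$, and on such $R$-points we can apply Buium's $p$-derivation operator $\delta$.

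Next, I would form the arithmetic jet spaces $J^r(A)$ and $J^r(X)$ for a suitable jet level $r$, together with the canonical map $X(R) \to J^r(X)(R)$. Buium's theory produces $n = \dim A$ independent $\delta$-characters $\psi_1, \ldots, \psi_n$ on $J^r(A)$ vanishing on all prime-to-$p$ torsion $R$-points; restricting to $J^r(X)$ gives $n$ functions whose common zero locus $Z \subset J^r(X)$ contains the image of every prime-to-$p$ torsion point of $A(R)$ lying on $X$. The decisive structural input, proved earlier in the paper, is that the special fiber $J^r(X)_0$ over $\overline{\mathbb{F}}_p$ is affine; this follows from the ampleness of $\Omega_X^1$, which provides enough global differentials on $X_0$ to trivialize, up to Frobenius twists, the relative structure of the jet tower above $X_0$. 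The fibers of $J^r(X)_0 \to X_0$ are then modeled on successive Frobenius pullbacks of $\Omega_{X_0}^1$, which is the reason the Segre classes $s_i(F^{*}_{X_0}\Omega^1_{X_0})$ enter the final estimate.

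To convert the affineness into an effective count of the scheme-theoretic image of $Z$ in $X_0$, I would choose a projective compactification $\overline{J^r(X)_0}$ on which the $\overline{\psi}_i$ extend to sections of a suitable very ample line bundle. Using that $3\Theta_0$ is very ample on $A_0$ by Lefschetz, I would pull back $\sO_{A_0}(3\Theta_0)$ to the jet space and combine it with the vertical classes coming from $F^{*}_{X_0}\Omega^1_{X_0}$ to polarize the compactification. A top-dimensional intersection on $\overline{J^r(X)_0}$ of this polarization with the divisors cut out by the $\overline{\psi}_i$ then bounds the length of $Z$. Pushing the resulting intersection product down to $X_0$ via the projection formula would produce the sum $\sum_{i=0}^d \binom{2d}{d+i}\deg((-1)^i s_i(F^{*}_{X_0}\Omega^1_{X_0})\cdot \sO_{X_0}(3\Theta_0)^{d-i})$, while the prefactor $p^{3n}3^n n!$ collects the $p$-adic scaling of the $\psi_i$, the cube appearing in $3\Theta_0$, and the symmetric factor coming from the $n!$-fold ordering of the $n$ character equations.

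The main obstacle is the last step: carrying out the intersection computation on the chosen compactification so that the bound assumes precisely the stated closed form. This requires a careful choice of compactification of $J^r(X)_0$ on which the $\overline{\psi}_i$ have controlled divisorial behavior, an exact identification of the graded pieces of the $p$-jet tower with Frobenius twists of $\Omega^1_{X_0}$ (so that Segre classes, not Chern classes, enter with the correct signs $(-1)^i$), and precise bookkeeping of the $p$-adic weights of Buium's characters in order to pin down the power of $p$ appearing in the leading constant. Keeping the compactification compatible with the subscheme $Z$, rather than merely with $J^r(X)_0$, so that no excess intersection inflates the bound, is where most of the technical effort will concentrate.
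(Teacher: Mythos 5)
Your outline shares the paper's skeleton (reduce to $R=W(\overline{\mathbb{F}}_p)$-points via unramifiedness of prime-to-$p$ torsion, pass to arithmetic jet spaces, invoke affineness of the special fiber, and finish with an intersection computation against $3\Theta_0$ producing Segre classes of $F_{X_0}^*\Omega^1_{X_0}$), but the mechanism you propose for \emph{finiteness} is not the paper's, and as stated it has a gap. You want to take $n$ independent $\delta$-characters $\psi_1,\dots,\psi_n$ on $J^r(A)$ vanishing on prime-to-$p$ torsion and intersect $J^r(X)$ with their common zero locus $Z$. But affineness of $J^r(X)_0$ plus $n$ equations does not make $Z$ finite: the common kernel of the reduced characters is a positive-dimensional, non-proper subgroup scheme of $J^r(A_0)$ (it contains a large unipotent part), so a dimension count gives $\dim Z \geq \dim J^r(X)_0 - n$, which is positive in general. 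Your own closing paragraph — finding a compactification on which the $\overline{\psi}_i$ extend with controlled divisorial behavior and no excess intersection — is exactly where this breaks. The paper avoids characters entirely: it works only with $J^1$, shows (Lemma~\ref{lemma:maximal_abelian_subvariety}) that $B=pJ^1(A_0)$ is the maximal abelian, hence \emph{proper}, subvariety of $J^1(A_0)$, and that $\nabla_0$ of the torsion lands in at most $p^n$ translates $B_i$ of $B$. Finiteness is then the elementary fact that $J^1(X_0)\cap B_i$ is both affine and proper. The quantitative bound is Bézout in the canonical compactification $\mP(\sE_{X_0})\hookrightarrow\mP(\sE_{A_0})$ with respect to $\sH=\rho_{A_0}^*\sO_{A_0}(3\Theta_0)\otimes\sO_{\mP(\sE_{A_0})}(1)$, applied to the pair $B_i$ and $\mP(\sE_{X_0})$; the factor $p^{3n}3^n n!$ decomposes as $p^n$ (number of translates, via Dogra--Pandit) times $p^{2n}3^n n!$ (the bound for $\deg_{\sH}B_i$), not as a scaling of characters.

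A second, smaller omission: Theorem~\ref{thm:explicit_unramified_MM} is global, while the affineness theorem requires an auxiliary smooth $R$-curve $C\hookrightarrow X$ of genus $g\geq 2$ with $p>2\dim(X)^2g$, together with ampleness of $\Omega^1_{X_0}$ (not just of $\Omega^1_X$). Your proposal slides from $X$ to $X_0$ without justification. The paper handles this in Lemma~\ref{lemma:findingprime}: produce $C$ by Bertini, spread out $C\subset X\subset A$ over $\sO_{F,S}$, and use the fiberwise criterion for ampleness of vector bundles to see that the hypotheses of the local Theorem~\ref{thm:mainthm1} hold for all $p\gg 0$. Any correct write-up of Theorem~\ref{thm:explicit_unramified_MM} needs this spreading-out step.
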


\autoref{thm:explicit_unramified_MM} can be seen as a higher dimensional generalization of the quantitative Manin--Mumford theorem for curves. We remark that the amplitude of the cotangent bundle is one of many `hyperbolicity conditions', i.e., conditions that generalize genus $g \geq 2$ for curves.

While the Manin--Mumford conjecture was originally established by Raynaud \cite{Raynaud:ManinMumford}, we follow the method of Buium \cite{Buium:GeometrypJets}. This method utilized his theory of arithmetic jet spaces and cemented their role as a tool for producing finiteness theorems in Diophantine geometry.  
Buium's work combined a result of Coleman \cite{Col87} and a calculation over local fields which can be made explicit; we refer the reader to Section \ref{sec:MMProof} for further discussion. 
Note that this effective method inspired algorithmic improvements \cite{Poonen:2001:CTP}. 
As we follow Buium's approach, we also give an effective version.

Our proof of \autoref{thm:explicit_unramified_MM} follows from a local unramified Manin--Mumford statement, which is similar to \cite[Theorem~1.11]{Buium:GeometrypJets}.  
To state our result,  fix $p$ a prime integer,  and let $R = W(\mF_p^{\alg})$ be the completed ring of integers of the maximal unramified extension of $\mQ_p$ and $K = R[1/p]$. 
The explicit local form requires the existence of a curve of genus $g \geq 2$ in the special fiber with a specific relationship between $p$, $g$, and $\dim(X)$. 

\begin{thmx}[Quantitative, unramified Manin--Mumford for varieties with ample cotangent bundle modulo $p\gg 0$]\label{thm:mainthm1}
Let $X$ be a smooth projective $R$-subvariety of dimension $d$ of an abelian $R$-scheme $A$ of dimension $n$ such that the special fiber $X_0$ has ample cotangent bundle. 
Suppose that there exists a smooth projective $R$-curve $C$ of genus $g \geq 2$ with a closed embedding $\iota\colon C \hookrightarrow X$. When $p>2\dim(X)^2g$,
\[
\#(X(K) \cap A(K)[\tors]) \leq p^{3n} 3^{n} n!  \left( \sum_{i=0}^d  {2d \choose d + i} \deg((-1)^is_i(F^{*}_{X_0}\Omega_{X_0}^1)\cdot \sO_{X_0}(3\Theta_0)^{d-i})\right)
\]
where $(-1)^is_i(F^{*}_{X_0}\Omega_{X_0}^1)$ is the $i$-th Segre class of the Frobenius pullback of the cotangent bundle of $X_0$ and $\Theta_0$ is a $\Theta$-divisor on the abelian variety $A_0$. 
\end{thmx}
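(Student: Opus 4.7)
The plan is to adapt Buium's arithmetic-jet-space proof of quantitative Manin--Mumford for curves to higher dimensions, with ampleness of $\Omega^1_{X_0}$ replacing the role of the genus $\geq 2$ condition. Since $A$ has good reduction over $R$, every torsion $K$-point lies in $A(R)$, so I would work with the $n$-th arithmetic jet space and the canonical jet map $\nabla^n \colon X(R) \to J^n(X)(R)$; functoriality gives a closed embedding $J^n(X) \hookrightarrow J^n(A)$. Buium's construction on the abelian side produces a formal group homomorphism $\psi^n \colon J^n(A) \to \widehat{\mathbb{G}}_a^{N}$ whose kernel captures $\nabla^n(A(R)[\tors])$. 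Consequently $X(K) \cap A(K)[\tors]$ injects into the $R$-points of the closed formal subscheme $Z^n := J^n(X) \cap \ker(\psi^n)$, and the problem reduces to bounding $|Z^n(R)|$.

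The decisive input---and the step I expect to be the main obstacle---is the affineness of the special fiber $J^n(X)_0$ as a scheme over $\mathbb{F}_p^{\alg}$ for an appropriate $n$, highlighted in the abstract as the higher-dimensional analogue of Buium's result that $J^1(C)_0$ is affine for a curve $C$ of genus $\geq 2$. I would produce enough global functions on $J^n(X)_0$ by pushing forward sections of Frobenius-twisted symmetric powers of $\Omega^1_{X_0}$, exploiting ampleness of the cotangent bundle so that these sections separate jets. The auxiliary curve $C \hookrightarrow X$ lets me pull Buium's curve-level affineness back along $C$ and bootstrap to $X$, while the numerical hypothesis $p > 2\dim(X)^2 g$ ensures that the Frobenius pullbacks of the relevant line bundles remain well behaved so that no unwanted vanishing loci appear as $n$ grows. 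With affineness in hand, $Z^n_0$ is an affine closed subscheme of the proper component of $\ker(\psi^n)_0$, forcing it to be zero dimensional and thus making $Z^n(R)$ finite. Finiteness of $X(K) \cap A(K)[\tors]$ then follows from injectivity of $\nabla^n$ on torsion.

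To convert finiteness into the explicit bound, I would compute the length of $Z^n_0$ as a degree in the Chow ring of $X_0$. The morphism $J^n(X)_0 \to X_0$ is an iterated affine bundle whose associated graded is a Frobenius pullback of $\Omega^1_{X_0}$, so its Segre classes contribute the factors $(-1)^i s_i(F^{*}_{X_0}\Omega^1_{X_0})$; the divisors cutting out $\ker(\psi^n)_0$ inside $J^n(A)_0$ are pulled back from translates of a theta divisor on $A_0$, and using the very ample representative $3\Theta_0$ yields the factor $\mathcal{O}_{X_0}(3\Theta_0)^{d-i}$. The binomial weights $\binom{2d}{d+i}$ arise from the Bezout combinatorics of intersecting the jet bundle with $d$ such divisors in the $2d$-dimensional ambient $J^n(X)_0$, while the prefactor $p^{3n} 3^n n!$ collects Frobenius-degree contributions, normalization constants from the choice of $3\Theta_0$, and the $n!$ combinatorics coming from the rank of $\psi^n$.
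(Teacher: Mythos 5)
Your high-level plan is Buium's, but the mechanism you propose for the finiteness step does not match the paper's and, as stated, has a gap. You reduce to bounding $Z^n(R)$ for $Z^n = J^n(X)\cap\ker(\psi^n)$, where $\psi^n$ is a $\delta$-character, and then assert that $Z^n_0$ is ``an affine closed subscheme of the proper component of $\ker(\psi^n)_0$, forcing it to be zero dimensional.'' The kernel of a $\delta$-character on $J^n(A)_0$ is a group scheme built from copies of $\bG_a$ and an abelian part; it is not proper, and ``the proper component'' is not something you have identified or shown to contain the image of the torsion. The paper avoids $\delta$-characters entirely: it works only with the first jet space, uses \autoref{lemma:maximal_abelian_subvariety} to identify $B:=pJ^1(A_0)$ as the maximal abelian (hence proper) subvariety of $J^1(A_0)$ with $\dim B=\dim A_0$, and shows $\nabla_0(A(R)[\tors])$ lands in at most $D\le p^{n}$ translates $B_i$ of $B$ (the Dogra--Pandit refinement of Buium's $p^{2n}$). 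Each $J^1(X_0)\cap B_i$ is then affine $\cap$ proper, hence finite. If you want to salvage your route, you must supply the analogue of this step: a proper subvariety of the jet space, together with an explicit count of how many of its translates contain the image of the torsion.

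Your accounting of the constants is also off, and this matters because the theorem asserts a specific bound. In the paper, $n$ is $\dim A$, not a jet order (only $J^1$ is used); the factor $p^{3n}3^n n!$ decomposes as $D\le p^{n}$ times the estimate $\deg_{\sH}B_i\le p^{2n}3^n n!$ for the translates of $B$ with respect to the very ample $\sH=\rho_{A_0}^*\sO_{A_0}(3\Theta_0)\otimes\sO_{\mP(\sE_{A_0})}(1)$, so the $n!$ has nothing to do with ``the rank of $\psi^n$.'' The binomial coefficients $\binom{2d}{d+i}$ come from expanding the self-intersection $\bigl(\rho_{X_0}^*\sO_{X_0}(3\Theta_0)\otimes\sO_{\mP(\sE_{X_0})}(1)\bigr)^{2d}$ on the $2d$-dimensional $\mP(\sE_{X_0})$, with the terms $i<d$ vanishing for dimension reasons and the surviving powers of $\sO(1)$ becoming Segre classes of $\sE_{X_0}$ (which equal those of $F_{X_0}^*\Omega^1_{X_0}$ by the Whitney formula applied to $0\to\sO_{X_0}\to\sE_{X_0}\to F^*_{X_0}\Omega^1_{X_0}\to 0$); B\'ezout then multiplies $\deg_\sH B_i$ by $\deg_\sH\mP(\sE_{X_0})$. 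Finally, your sketch of affineness (producing sections of symmetric powers, ``bootstrapping'' from the curve) is not the paper's argument either: the paper proves the extension bundle $\sE$ is ample via Martin-Deschamps' splitting criteria and Tango's theorem, which is exactly where $p>2\dim(X)^2g$ enters.
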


When $X$ is a curve, the bound from \autoref{thm:mainthm1} specializes to Buium's bound \cite{Buium:GeometrypJets}, with minor improvement (cf.~Subsection \ref{subsec:finite_unramified} and \autoref{rem:Buium_bound}). 
Also, in \autoref{lemma:findingprime}, we show that for $X$ a smooth $F$-subvariety of an abelian $F$-variety with ample cotangent bundle and all $p\gg 0$, the base change of $X$ to $K$ will admit an $R$-model where the conditions of \autoref{thm:mainthm1} are satisfied. 
Thus \autoref{thm:explicit_unramified_MM} will follow from \autoref{thm:mainthm1}.

Our proof of \autoref{thm:mainthm1} requires generalizing two steps of Buium's proof.  
The first involves the geometry of arithmetic jet spaces $J^nX$ introduced in \cite{Buium:GeometrypJets}. Here, the critical point is to use the technical conditions on the special fiber of the smooth $R$-variety $X$ to ensure the special fiber of the first arithmetic jet space $J^1X$ is affine, a result we record as the generalization of this step of Buium's proof required a novel observation. 

\begin{thmx}[Affineness of special fibers of arithmetic jet spaces]
\label{thm:main_Affine}
Let $X$ be a smooth projective $R$-variety such that the special fiber $X_0$ has ample cotangent bundle. Suppose there exists a smooth projective $R$-curve $C$ of genus $g \geq 2$ with a closed embedding $\iota\colon C \hookrightarrow X$ and that $p>2\dim(X)^2g$. For every $n\geq 1$, the special fiber $J^n(X_0)$ of the arithmetic $n$-jet space $J^nX$ is affine. 
\end{thmx}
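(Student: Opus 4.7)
My approach is induction on $n$, with the essential content in the base case $n=1$. For the inductive step, the tower map $J^n X_0 \to J^{n-1} X_0$ is an affine morphism on the special fiber, in fact a torsor under a vector bundle built from Frobenius pullbacks of $\Omega_{X_0}^1$. Since affine morphisms preserve affineness of the target, once $J^1 X_0$ is known to be affine, each subsequent $J^n X_0$ is affine over it and hence affine.

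For $n=1$, I would invoke Buium's structure theorem identifying $\pi\colon J^1 X_0 \to X_0$ as a torsor under the Frobenius pullback $F^* T_{X_0}$ of the tangent bundle. The ampleness of $\Omega_{X_0}^1$ forces $F^*\Omega_{X_0}^1$ to be ample and dually $F^* T_{X_0}$ to be anti-ample. To prove affineness, I would verify Serre's cohomological criterion: $H^i(J^1 X_0, \mathcal{F}) = 0$ for all quasi-coherent $\mathcal{F}$ and $i \geq 1$. Since $\pi$ is affine, this reduces to showing $H^i(X_0, \pi_*\mathcal{F}) = 0$. The pushforward $\pi_*\mathcal{O}_{J^1 X_0}$ carries a canonical increasing filtration with associated graded $\bigoplus_{k\geq 0} \Sym^k(F^*\Omega_{X_0}^1)$, and Serre vanishing for the ample bundle $F^*\Omega_{X_0}^1$ handles all sufficiently large $k$ at once.

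The crucial difficulty lies in the finitely many small-$k$ terms, where the cohomology of symmetric powers need not vanish on its own. This is where the embedded curve $\iota\colon C \hookrightarrow X$ of genus $g \geq 2$ and the numerical threshold $p > 2\dim(X)^2 g$ should do the essential work. My strategy would be to pull back the torsor along $\iota$ on special fibers, obtaining a torsor over $C_0$ compatible with $J^1 C_0$, for which Buium's curve case already supplies affineness and the corresponding cohomological vanishing. This restriction should witness the torsor class $c \in H^1(X_0, F^*T_{X_0})$ as sufficiently nondegenerate that the connecting maps in the spectral sequence of the filtration annihilate the offending low-$k$ cohomology on $X_0$.

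The hard part, and what I would expect to be the main obstacle, is making this propagation from $C_0$ to all of $X_0$ genuinely effective: one must show that a single curve's contribution, once amplified through the Frobenius twist and the ampleness of $\Omega_{X_0}^1$, kills the obstructions in every low-$k$ piece simultaneously. Extracting the exact inequality $p > 2\dim(X)^2 g$ as the precise threshold governing this annihilation—rather than a softer asymptotic $p \gg 0$—is the most delicate technical step and likely requires a careful quantitative comparison between the torsor class, Segre-type intersection numbers on $X_0$, and the genus-dependent cohomology of $C_0$.
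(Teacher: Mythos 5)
Your reduction of $n\geq 1$ to the case $n=1$ via the affine tower $J^nX_0\to J^{n-1}X_0\to\cdots\to J^1X_0$ is fine in outline (the paper defers this step to Buium), and you correctly identify the torsor structure of $J^1X_0\to X_0$ under $\FT(X_0/k)=\RSpec\Sym F_{X_0}^*\Omega_{X_0}^1$ as the starting point. But your treatment of the base case has a genuine gap: you reformulate affineness as a cohomological vanishing problem via Serre's criterion and a filtration of $\pi_*\sO_{J^1X_0}$ with graded pieces $\Sym^k(F_{X_0}^*\Omega_{X_0}^1)$, and then explicitly acknowledge that you do not know how to kill the low-$k$ obstructions — which is precisely where all the content lies. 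This is a restatement of the difficulty, not a proof. Moreover, your guess that the threshold $p>2\dim(X)^2g$ arises from ``Segre-type intersection numbers'' is off the mark: it is exactly the degree hypothesis in Tango's theorem guaranteeing that $H^1(C_0,\sF_i^\vee)\hookrightarrow H^1(C_0,F_{C_0}^*\sF_i^\vee)$ is injective for each indecomposable summand $\sF_i$ of $\iota_{C}^*\Omega_{X_0}^1$ after Frobenius pullback.

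The paper's actual mechanism is different and you would need its key ingredients. One shows that the extension bundle $\sE$ in $0\to\sO_{X_0}\to\sE\to F_{X_0}^*\Omega_{X_0}^1\to 0$ classifying the torsor is \emph{ample}; then, since $J^1(X_0)\cong\mP(\sE)\setminus\mP(F_{X_0}^*\Omega_{X_0}^1)$ and $\mP(F_{X_0}^*\Omega_{X_0}^1)$ lies in the linear system of $\sO_{\mP(\sE)}(1)$, affineness is immediate as the complement of an ample divisor — no Serre-criterion bookkeeping is needed. Ampleness of $\sE$ is proved by contradiction through a dichotomy on restrictions to curves, using Martin-Deschamps' results: if $\sE$ is not ample there is a positive-dimensional $Z\subset\mP(\sE)$, finite and generically radical over $X_0$, over which the sequence splits; restricting to a curve where $\sE$ is ample yields a splitting after a Frobenius iterate, contradicting ampleness, while if $\sE$ is non-ample on every curve one applies Tango injectivity (this is where $p>2\dim(X)^2g$ enters) to upgrade ``splits after Frobenius pullback'' to ``already split,'' then transfers the splitting to the class of $J^1(C_0)\to C_0$ and contradicts Raynaud's theorem that a genus $\geq 2$ curve admits no lift of Frobenius. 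None of these steps — the ampleness target, the dichotomy, Tango, Raynaud, and the non-splitting of the torsor coming from $X_0$ being of general type — appears in your proposal, so the argument as written cannot be completed along the lines you sketch.
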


The final generalization concerns the effective bounds from Buium's work \cite{Buium:GeometrypJets}. 
Buium's effectiveness arises from an explicit intersection theory calculation. 
For curves, this results in a simplified expression involving a prime of good reduction $p$ and the genus of the curve. 
In higher dimensions, this intersection calculation naturally takes place in the Chow ring of the special fiber, hence its expression in terms of Segre classes on the special fiber.

\subsection*{Applications}
We provide two applications of our main theorems.  The first application is the full Manin--Mumford statement for varieties with ample cotangent bundle. This simpler but more general, ineffective statement, is direct combination of a result of Bogomolov \cite{Bogomolov:LAdicReps} and \autoref{thm:explicit_unramified_MM}, which is logically independent from other known proofs.

\begin{thmx}\label{thm:general_MM}
Let $F$ be a number field and $A$ an abelian $F$-variety. If $X$ is a smooth $F$-subvariety of $A$ such that the cotangent bundle $\Omega_X^1$ of $X$ is ample, then
\[
\# (X(F^{\alg}) \cap A(F^{\alg})[\tors]) < \infty. 
\]
\end{thmx}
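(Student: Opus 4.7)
The plan is to derive \autoref{thm:general_MM} by contradiction, combining \autoref{thm:explicit_unramified_MM} with Bogomolov's openness theorem for the image of the Galois representation on the full Tate module $T(A) = \prod_\ell T_\ell(A)$. I would assume $S := X(F^{\alg}) \cap A(F^{\alg})[\tors]$ is infinite and, after replacing $F$ by a suitable finite extension (which does not alter $S$), invoke Bogomolov's theorem to produce an integer $M$ such that for every $c \in \widehat{\ZZ}^\times$ with $c \equiv 1 \pmod{M}$, multiplication by $c$ on $A(F^{\alg})[\tors]$ is realized by a Galois element $\sigma_c \in \Gal(F^{\alg}/F)$; in particular, $S$ is $\sigma_c$-stable for all such $c$. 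I would then fix a prime $p > M$ of good reduction for $X$ and $A$ satisfying the hypotheses of \autoref{thm:explicit_unramified_MM}, so that $S \cap A(F^{\alg})[\nptors]$ is finite and $S_p := \{P \in S : p \mid \ord(P)\}$ is infinite.

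Next I would split $S_p$ according to whether $v_p(\ord(P))$ is bounded on $S_p$. In the bounded case, say $v_p(\ord P) \leq C$ for all $P \in S_p$, the $p$-primary component $P_p$ of each $P \in S_p$ lies in the finite set $A(F^{\alg})[p^C]$, and for each fixed nonzero $Q$ in that set the fiber of $P \mapsto P_p$ over $Q$ is identified via $P \mapsto P - Q$ with $(X - Q) \cap A(F^{\alg})[\nptors]$, finite by \autoref{thm:explicit_unramified_MM} applied to the translate $X - Q$ (still of ample cotangent bundle, since translation in $A$ is an automorphism). Summing over the finitely many $Q$ forces $S_p$ to be finite, a contradiction.

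If on the other hand $v_p(\ord(P))$ is unbounded on $S_p$, I would pick an auxiliary large prime $q \neq p$ satisfying the hypotheses of \autoref{thm:explicit_unramified_MM} and coprime to $M$, and choose $P \in S_p$ with $v_p(\ord P) = a$ arbitrarily large. Decomposing $P = P_p + P'$ with $P_p \in A[p^a]$ and $P' \in A(F^{\alg})[\nptors]$, the Bogomolov orbit $\{ \sigma_c(P) : c \in \widehat{\ZZ}^\times,\ c \equiv 1 \pmod{M},\ c_\ell = 1 \text{ for all } \ell \neq p \} \subseteq S$ consists of points of the form $c_p P_p + P'$ as $c_p$ ranges over $(\ZZ/p^a)^\times$ (since $p \nmid M$), hence contains at least $\phi(p^a) \geq p^{a-1}$ distinct elements, all sharing the prime-to-$p$ part $P'$. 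Translating by $-P'$ produces at least $p^{a-1}$ distinct points of $(X - P') \cap A(F^{\alg})[p^\infty]$. Since $A[p^\infty] \subseteq A(F^{\alg})[\nptors_q]$, \autoref{thm:explicit_unramified_MM} applied to $X - P'$ at the prime $q$ bounds this intersection by a constant depending only on translation-invariant geometric data (the cotangent bundle of $X$ and the algebraic equivalence class of the $\Theta$-divisor on $A_0$ modulo $q$), hence uniform in $P'$. Choosing $a$ sufficiently large yields a contradiction.

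The hard part will be securing Bogomolov's theorem in the precise form required: a Galois element $\sigma_c$ acting simultaneously as $c_p$ on $A[p^\infty]$ and as the identity on $A[\ell^\infty]$ for $\ell \neq p$. This amounts to openness of the image of $\Gal(F^{\alg}/F)$ in $\prod_\ell \Aut(T_\ell A)$ at the scalar-homothety center, slightly stronger than openness prime-by-prime but classical from Bogomolov's work. Given this and the translation-invariance of the Chow-theoretic data in the bound of \autoref{thm:explicit_unramified_MM}, the argument reduces to the two-case bookkeeping above, and at no point invokes Raynaud's theorem.
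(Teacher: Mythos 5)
Your strategy is genuinely different from the paper's. The paper quotes the standard reduction (Raynaud, R\"ossler, Pink--R\"ossler, Ito) of full Manin--Mumford to the two statements $\#(X\cap A[p^{\infty}])<\infty$ and $\#(X\cap A[\nptors])<\infty$, obtains the second from \autoref{thm:explicit_unramified_MM}, and obtains the first as a black box from Bogomolov's finiteness theorem for $p$-primary torsion (Th\'eor\`eme 3 of his paper, applicable because every subvariety of $X$ is of general type by \autoref{lemma:amplecotangent_subvarieties}). You instead try to bypass Bogomolov's finiteness statement and redo the reduction by hand, using only \autoref{thm:explicit_unramified_MM} applied to translates of $X$ together with adelic Galois homotheties. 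Note first that the Galois input you need --- an element acting as a prescribed scalar on $T_p A$ and trivially on $T_\ell A$ for all $\ell\neq p$ --- is Serre's adelic homothety theorem; it does not follow formally from Bogomolov's prime-by-prime openness, though it is indeed classical.

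The more serious gap is in your use of \autoref{thm:explicit_unramified_MM} on translates. In Case 1 you apply it \emph{at the prime $p$} to $X-Q$ with $Q\in A[p^{C}]\setminus\{0\}$ to conclude that $(X-Q)(F^{\alg})\cap A(F^{\alg})[\nptors]$ is finite. But the hypotheses of \autoref{thm:explicit_unramified_MM} at $p$ --- via \autoref{lemma:findingprime} and \autoref{thm:mainthm1} --- require the variety to descend to a smooth model over $W(\mF_p^{\alg})$, hence to be defined over a field unramified above $p$. The field $F(Q)$ is in general ramified above $p$ (e.g.\ whenever $Q$ meets the connected part of $A[p^{C}]$ over $W(\mF_p^{\alg})$, which is nontrivial), so the theorem cannot be invoked for the pair $(X-Q,p)$; invoking it at another prime $r$ only controls prime-to-$r$ torsion, which does not contain $A(F^{\alg})[\nptors]$. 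The same issue recurs in Case 2: the auxiliary prime $q$ is fixed once, but the prime-to-$p$ parts $P'$ range over torsion points of unbounded order, so you cannot guarantee $q\nmid\ord(P')$, and when $q\mid\ord(P')$ the translate $X-P'$ again fails to live over a field unramified at $q$; moreover the asserted uniformity of the bound over these infinitely many translates is not established. This is exactly the difficulty the standard reduction is designed to circumvent. The shortest repair is the paper's route: quote the reduction and use Bogomolov's finiteness theorem for $X\cap A[p^{\infty}]$ directly, rather than manufacture the $p$-primary finiteness from translates.
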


The final application is two-fold.  
First, we identify a natural explicit class of varieties where the conditions of \autoref{thm:mainthm1} are satisfied. 
This class is formed as a complete intersection of sufficiently ample general hypersurfaces in an abelian $\mQ^{\alg}$-variety. We use results of Debarre \cite{Debarre:VarietiesAmpleCotangent, Debarre:CorrigendumVarietiesAmple} and \autoref{lemma:findingprime} to deduce that they satisfy \autoref{thm:mainthm1}. 
For the second aspect, we use a result of Scarponi \cite{Scarponi:Sparsity}. This work studies the first critical scheme and relates this to torsion. In the case this critical scheme is finite, \cite[Theorem 6.1]{Scarponi:Sparsity} witnesses the same bound given in \autoref{thm:mainthm1}. In the case of general complete intersections in an abelian variety, Scarponi gives a detailed study of the intersections of Segre classes appearing in this bound. Thus, as \autoref{thm:mainthm1}, which is logically independent from \cite{Scarponi:Sparsity}, gives the needed finiteness, we are free to utilize this analysis. This allows us to show that the bound from \autoref{thm:mainthm1}, which {\it a priori} involves intersections of Segre classes can be realized in more explicit terms in this case. 

\begin{thmx}\label{thm:intro_complete_intersection_bound}
Let $A$ be an abelian $\mQ$-variety of dimension $n$, and $X$ a smooth $\mQ$-subvariety of $A$ that is $\mQ^{\alg}$-isomorphic to the intersection of $c >n/2$ sufficiently ample general hypersurfaces $H_1,\dots, H_c $ of large and divisible enough degrees $d_1,\dots,d_c$ in $A_{\mQ^{\alg}}$. 
For every sufficiently large prime $p$, 
there exists some constant $\alpha(p,n,\underline{I})$ that is polynomial in $p,n$, and intersection numbers $\underline{I}$ of certain products of the hypersurfaces such that 
\[
\# (X(\mQ^{\alg}) \cap A(\mQ^{\alg})[\nptors]) \leq \alpha(p,n,\underline{I}).
\]
\end{thmx}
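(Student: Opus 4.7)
The plan is to verify that $X$ satisfies the hypotheses of \autoref{thm:explicit_unramified_MM} and then to rewrite the Chow-theoretic bound appearing there as an explicit polynomial in $p$, $n$, and intersection numbers of products of the $H_j$. The proof splits naturally into three steps.

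First, I would invoke Debarre's theorem \cite{Debarre:VarietiesAmpleCotangent, Debarre:CorrigendumVarietiesAmple}: if $H_1,\dots,H_c$ are general hypersurfaces of sufficiently large and divisible degrees in $A_{\mQ^{\alg}}$ with $c > n/2$, then their complete intersection is smooth with ample cotangent bundle. Since ampleness of $\Omega_X^1$ is preserved under base change, this places $X$ in the setting of \autoref{thm:explicit_unramified_MM}, which then yields
\[
\#(X(\mQ^{\alg}) \cap A(\mQ^{\alg})[\nptors]) \leq p^{3n}3^n n! \sum_{i=0}^d \binom{2d}{d+i} \deg\bigl((-1)^i s_i(F^{*}_{X_0}\Omega_{X_0}^1)\cdot \sO_{X_0}(3\Theta_0)^{d-i}\bigr)
\]
for every sufficiently large prime $p$, where $d=n-c$. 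It remains to bound the right-hand side by a polynomial in $p$, $n$ and the prescribed intersection numbers $\underline{I}$.

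Next, I would unpack the Segre classes via the conormal exact sequence on $X_0$,
\[
0 \to \cN^{\vee}_{X_0/A_0} \to \Omega^1_{A_0}\bigl|_{X_0} \to \Omega^1_{X_0} \to 0.
\]
Because $A_0$ is an abelian variety, $\Omega^1_{A_0}$ is trivial, so $s(\Omega^1_{X_0}) = c(\cN^{\vee}_{X_0/A_0})^{-1}$; moreover $\cN^{\vee}_{X_0/A_0} \cong \bigoplus_{j=1}^c \sO_{X_0}(-H_j|_{X_0})$. Pullback along the absolute Frobenius multiplies each divisor class $H_j|_{X_0}$ by $p$, so each $s_i(F^{*}_{X_0}\Omega_{X_0}^1)$ becomes a polynomial in $p$ and the classes of the restrictions $H_j|_{X_0}$. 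Intersecting with $(3\Theta_0)^{d-i}$ and pushing forward from $X_0$ to $A_0$ converts every summand into a $\ZZ$-linear combination, with coefficients polynomial in $p$ and explicit binomials in $n$, of the intersection numbers $\underline{I}$ of products of the $H_j$ and $\Theta_0$ on $A_0$.

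The main obstacle is the combinatorial bookkeeping: packaging the resulting sum as a single polynomial $\alpha(p,n,\underline{I})$ while verifying that only finitely many intersection numbers, all depending a priori on $n$ and the $H_j$, occur. For this I would appeal to Scarponi's detailed analysis in \cite[Section 6]{Scarponi:Sparsity}, which carries out precisely this expansion for complete intersections of general hypersurfaces in an abelian variety and produces a polynomial bound of exactly the required shape. Since \autoref{thm:mainthm1} provides the requisite finiteness independently of \cite{Scarponi:Sparsity}, inserting Scarponi's Segre-class computations into our bound produces the desired $\alpha(p,n,\underline{I})$ and completes the proof.
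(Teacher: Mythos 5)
Your proposal is correct and follows essentially the same route as the paper: Debarre's theorem to verify ampleness of the cotangent bundle, then \autoref{thm:explicit_unramified_MM} (equivalently \autoref{thm:mainthm1} via \autoref{lemma:findingprime}) for the bound, and finally Scarponi's Segre-class expansion for complete intersections to rewrite the Chow-theoretic sum as a polynomial in $p$, $n$, and the intersection numbers. The only point worth making explicit, which the paper does and you leave implicit in ``sufficiently large $p$,'' is that $p$ must also be chosen so that $X$ remains a complete intersection of the reductions of the $H_j$ modulo $p$, since your conormal-bundle splitting $\cN^{\vee}_{X_0/A_0}\cong\bigoplus_j\sO_{X_0}(-H_j|_{X_0})$ relies on this.
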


\subsection*{Related results}
The Manin--Mumford conjecture has a rich history and many proofs. To provide some context for our results we summarize the relevant literature. For a general overview of this conjecture in the curve setting, we refer the reader to \cite{Tzermias:MMSurvey}. 
Since our main theorems concern quantitative versions of Manin--Mumford, we primarily focus our discussion on works that prove a finiteness result for $X(F^{\alg}) \cap A(F^{\alg})[\tors]$ and whose proof methods provide a bound for this intersection. 
Below, we refer to the intersection $X(F^{\alg}) \cap A(F^{\alg})[\tors]$ as the \cdef{torsion cosets} of $X$.

As mentioned above, our proof technique follows from Buium's proof \cite{Buium:GeometrypJets} of a quantitative Manin--Mumford for curves. 
His bound on torsion cosets of a curve of genus $g\geq 2$ is \textit{explicit} and depends only on $g$ and a prime $p$ of good reduction. 
Recent breakthrough works Dimitrov--Gao--Habegger and K\"uhne \cite{DGH:Uniform, Kuhne:Equidistribution} on Mazur's uniformity conjecture for curves prove a bound on the torsion cosets of the curve depending only on the genus of the curve. 
While this bound has no dependency on a prime of good reduction, it is {\it non-explicit} and it is unclear if the arguments can be pushed to make it explicit without significant new techniques. 

In the higher dimensional setting, we mention a four works. 
Using model theoretic techniques, Hrushovski \cite{Hrushovski:MM} proved an explicit Manin--Mumford in full generality. 
His bound on the torsion cosets of a subvariety $X$ of an abelian variety has the form $\alpha (\deg_{\sL} X)^{\beta}$ where $\sL$ is some ample line bundle on the ambient abelian variety and $\alpha$ and $\beta$ are constants that depend on the dimension of the abelian variety and a prime of good reduction. 
In \cite{DavidPhilippon:Minorations}, David--Philippon proved explicit bounds for the size of the torsion cosets of a subvariety contained in the product of an elliptic curve. Their bound has a similar form to Hrushovski's without the dependence on a prime of good reduction. 
Finally, we mention the work Gao--Ge--K\"uhne \cite{GaoGeKuhne:UniformML} which generalized the results of \cite{DGH:Uniform, Kuhne:Equidistribution} to higher dimensions, and as a by-product produces a bound depending only on $\dim(A) $ and $\deg_{\sL}(X)$ on the torsion coset a subvariety $X$ of an abelian variety. 
As with the curve setting, the structure of this constant is unclear. 

Finally, we comment on a paper of Scarponi \cite{Scarponi:Sparsity}. 
In this work, the author proves a higher dimensional version of a local result of Raynaud \cite{Raynaud:ManinMumford}, which was used in his proof of the Manin--Mumford conjecture.  
In particular, the author shows that for a subvariety $X$ of an abelian variety over $F$ with trivial stabilizer and $p\gg 0$, the collection of $p$-divisible unramified liftings is not Zariski dense. 
By further studying the Greenberg transformation of level 1 and using interpretations of projective bundles in the spirit of Buium's proof, the author gives an explicit upper bound on the number of irreducible components of the first critical scheme when $X$ is a complete intersection of sufficiently general hypersurfaces in an abelian variety. When this first critical scheme is finite, this bound produces an explicit bound on the prime-to-$p$ torsion cosets of $X$. Our \autoref{thm:mainthm1} provides this finiteness and so we utilize this version of the bound in our proof of \autoref{thm:intro_complete_intersection_bound}. 

\subsection*{Leitfaden}
In Section \ref{sec:prelims},  we establish conventions to be used throughout the work. 
Additionally, we recall background on vector bundles, the geometry of projective bundles, intersection theory, and arithmetic jet spaces. 
The proof of our main results occupy the remainder of the work. 
In Section \ref{sec:affinenes}, we prove our main theorem in the context of arithmetic jet spaces, \autoref{thm:main_Affine}. 
We continue in Section \ref{sec:explicitMM} with a proof of \autoref{thm:mainthm1}. 
The proof breaks down into two parts, one deducing finiteness and the other showing an explicit bound. 
The final two sections are devoted to our above applications. 
In Section \ref{sec:MMProof}, we prove \autoref{thm:explicit_unramified_MM}, which is a quantitative, unramified Manin--Mumford statement, and \autoref{thm:general_MM}, which is a general, yet ineffective, Manin--Mumford statement. This is of note as its logically independent from the other general proofs and follows quite quickly from a result of Bogomolov and Theorem A. Finally, we conclude in Section \ref{sec:computation} with the proof of \autoref{thm:intro_complete_intersection_bound}.

\subsection*{Acknowledgements} 
During preparation of this work, we benefited from conversations with Alexandru Buium, Netan Dogra, Paolo Mantero, and Arnab Saha. 
We also warmly thank Alexandru Buium for pointing out a mistake in a preliminary version and Netan Dogra for alerting us to \cite{Scarponi:Sparsity}. 
The second author is grateful for the support of the U.S.~National Science Foundation (DMS-2418796) and the Simons Foundation (MPS-TSM-00007985).

\section{Preliminaries} 
\label{sec:prelims}
We give a self-contained section summarizing the needed background as well as establishing notation and conventions. 

\subsection{Conventions}
\label{subsec:Conventions}
We set the following conventions. 

\subsubsection{\bf Fields and rings}
\label{subsec:field_conventions}
Throughout, $p$ will always be a prime number which unless otherwise stated is assumed to be {\it odd}. 
We set $k := \mF_p^{\alg}$,  $R := W(\mF_p^{\alg})$ the completed ring of integers of the maximal unramified extension of $\mQ_p$, and $K := R[1/p]$ the fraction field of $R$. 

\subsubsection{\bf Algebraic geometry}
For any ring $B$, a \cdef{$B$-variety} $X$ will always be an integral, separated scheme of finite type over $\Spec(B)$ where the structure morphism $\pi\colon X \to \Spec(B)$ is flat; this is automatic when $B$ is a field. 
A \cdef{$B$-subvariety} $Y$ of a $B$-variety $X$ will always be a closed $B$-subvariety of $X$. For a ring $S$, we set $X(S) := \Hom(\Spec S,X)$. 
Additionally, for a smooth $L$-variety $X$, we let $\Omega_X^1$ (resp.~$T_X$) denote the cotangent (resp.~tangent) bundle of $X$, which is a vector bundle of rank $\dim(X)$. 
By definition, we have that $\Omega_X^{1,\vee}\cong T_X$.

When $B = k$, as in Subsection \ref{subsec:field_conventions}, and $X$ a $k$-variety, we let $F_X \colon X \to X$ denote the absolute Frobenius morphism, and for each $e \geq 0$, we set $F_{X}^e \colon X \to X$ to be the $e$-th iterate of $F_X$. 
We will also describe this as the $e$-fold Frobenius morhpism. 
For $B = R$ as in Subsection \ref{subsec:field_conventions} and a $R$-variety $X$, we let $X_0$ denote the special fiber of $X$ (i.e., the base change of $X$ along $\Spec(k) \to \Spec(R)$) and $X_{\eta}$ denote the generic fiber of $X$ (i.e., the base change of $X$ along $\Spec(K) \to \Spec(R)$).

At times we will utilize formal schemes, specifically over the base ring $R$ as above. As such, we tacitly assume all formal schemes are topologically of finite type and $p$-torsion free. 
When possible we reserve calligraphic letters, e.g., $\cX, \cY,$ etc.~for formal schemes. 
We will use fraktur letters, e.g., $\mathfrak{X}$, $\mathfrak{Y}$, etc.~when spreading out varieties defined over a number field to a localization of the ring of integers. 
For $G$ a group scheme or group formal scheme, we denote by $G[\tors]$ its torsion subgroup, and fixing a prime $p$ by $G[p^\infty]$
the $p$-power torsion, and $G[\nptors]$ the prime-to-$p$ torsion. 

\subsection{Vector bundles} 
Much of our analysis concerns various basic facts about positivity of vector bundles, see \cite{Hartshorne:AmpleVectorBundle} for a general review of the notion of ampleness for vector bundles. 
Throughout this subsection, for $\sF$ a vector bundle, we denote by $\mP(\sF)$ its projectivization. We recall the needed ingredients. 

\begin{lemma}\label{lemma:ample_vb_curve}
For $\sF$ an ample vector bundle on a smooth projective $k$-curve, 
\[
\deg(\sF) = \deg(\det(\sF)) > 0.
\]
\end{lemma}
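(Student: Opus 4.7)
The plan is to treat the two assertions separately. For the equality $\deg(\sF) = \deg(\det(\sF))$, I would invoke the additivity of degree (equivalently, of the first Chern class) across short exact sequences of vector bundles on a smooth projective curve. Any rank-$r$ vector bundle $\sF$ on such a curve admits a filtration $0 = \sF_0 \subset \sF_1 \subset \cdots \subset \sF_r = \sF$ with line bundle successive quotients $L_i := \sF_i/\sF_{i-1}$, obtained inductively by choosing a saturated sub line bundle at each stage and using that on a smooth curve every torsion-free coherent sheaf is locally free. Additivity then gives $\deg(\sF) = \sum_i \deg(L_i)$, and multiplicativity of the determinant in short exact sequences gives $\det(\sF) \cong \bigotimes_i L_i$, whose degree is also $\sum_i \deg(L_i)$. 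This establishes the identity.

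For the positivity $\deg(\det(\sF)) > 0$, my approach is to exhibit the determinant line bundle $\det(\sF) = \wedge^{r}\sF$ as a quotient of the tensor power $\sF^{\otimes r}$ via the natural antisymmetrization surjection $v_1 \otimes \cdots \otimes v_r \mapsto v_1 \wedge \cdots \wedge v_r$. Since tensor products of ample vector bundles are ample (Hartshorne) and quotients of ample vector bundles are ample, it follows that $\det(\sF)$ is an ample line bundle on the smooth projective curve. By the classical correspondence (or Nakai--Moishezon), an ample line bundle on a smooth projective curve has strictly positive degree, which closes the argument.

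The main obstacle, if any, is simply choosing which black box from Hartshorne's theory of ample vector bundles to invoke. An alternative that entirely avoids the tensor-product preservation of ampleness is to use the characterization of ampleness for vector bundles on smooth projective curves: $\sF$ is ample if and only if every quotient bundle of $\sF$ has positive degree. Applied to the identity quotient $\sF \twoheadrightarrow \sF$, this gives $\deg(\sF) > 0$ immediately, and combined with the first paragraph yields the full statement. Either route is routine and self-contained at the level of standard references, so no genuine technical difficulty is expected.
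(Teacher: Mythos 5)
Your proposal is correct in substance and the first half (the filtration argument for $\deg(\sF)=\deg(\det\sF)$) is exactly the content of the reference the paper quotes for that identity. Where you diverge is in how you get positivity: the paper simply cites Hartshorne's Corollary~2.6 for the statement that $\det(\sF)$ is ample and then uses that an ample line bundle on a curve has positive degree, whereas you re-derive the ampleness of $\det(\sF)$ from the surjection $\sF^{\otimes r}\twoheadrightarrow\Lambda^r\sF$. That surjection does exist in every characteristic, but be careful with the black box you lean on: here $k=\mF_p^{\alg}$, and the assertion that tensor products of ample vector bundles are ample is \emph{not} proved in Hartshorne's paper in characteristic $p$ --- it is posed there as an open problem, and was settled for curves only later (Barton). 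So your primary route is mathematically fine but needs the correct attribution, and it is strictly heavier than what the paper uses. Your alternative route has a similar characteristic-$p$ trap: the quotient-degree criterion ``$\sF$ is ample if and only if every quotient bundle has positive degree'' is a characteristic-zero theorem, the ``if'' direction failing in positive characteristic; you only use the ``only if'' direction, which does hold in general, but as stated that direction is essentially the assertion being proved (``ample implies positive degree''), so invoking it is closer to citing the lemma than to proving it. The paper's choice of citations is precisely calibrated to avoid both of these characteristic-$p$ subtleties.
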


\begin{proof}
This follows from three statements:~$\det(\sF)$ is an ample line bundle by  \cite[Corollary 2.6]{Hartshorne:AmpleVectorBundle}, $\deg(\sF) = \deg(\det\sF)$ by \cite[\href{https://stacks.math.columbia.edu/tag/0DJ5}{Tag 0DJ5}]{stacks-project}, and finally an ample line bundle on a curve has strictly positive degree \cite[\href{https://stacks.math.columbia.edu/tag/0B5X}{Tag 0B5X}]{stacks-project}. 
\end{proof}

\begin{lemma}\label{lemma:amplecotangent_subvarieties}
Let $L$ be a field,  and let $X$ be a smooth projective $L$-variety with ample cotangent bundle. 
\begin{enumerate}
\item If $\cchar L = 0$, then every integral subvariety of $X$ is of general type. 
\item If $\cchar L = p > 0$, then every integral subvariety of $X$ with dimension 1 is of general type. 
\end{enumerate}
\end{lemma}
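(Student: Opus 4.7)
My plan for both parts is to use functoriality of the cotangent sheaf together with two standard positivity facts: ampleness of vector bundles is preserved under pullback by finite morphisms, and under passage to quotients. For the smooth case of part (1), let $Y\subseteq X$ be a smooth closed integral subvariety with inclusion $i\colon Y\hookrightarrow X$. The conormal exact sequence
\[
0\to \mathcal{N}^{*}_{Y/X}\to i^{*}\Omega_{X}^{1}\to \Omega_{Y}^{1}\to 0
\]
realizes $\Omega_{Y}^{1}$ as a quotient of $i^{*}\Omega_{X}^{1}$. Since $i$ is finite, $i^{*}\Omega_{X}^{1}$ is ample; and since quotients of ample vector bundles are ample (a surjection $\mathcal{E}\twoheadrightarrow \mathcal{F}$ induces a closed embedding $\mathbb{P}(\mathcal{F})\hookrightarrow \mathbb{P}(\mathcal{E})$ pulling $\mathcal{O}(1)$ back to $\mathcal{O}(1)$), $\Omega_{Y}^{1}$ is ample. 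Hence $K_{Y}=\det \Omega_{Y}^{1}$ is ample and $Y$ is of general type.

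For a singular integral $Y$ in part (1), my plan is to take a resolution $\pi\colon \tilde{Y}\to Y$ (available in characteristic zero by Hironaka) and to work with the composite $f:=i\circ \pi\colon \tilde{Y}\to X$. The differential $df\colon f^{*}\Omega_{X}^{1}\to \Omega_{\tilde{Y}}^{1}$ is generically surjective, so taking top exterior powers with $d=\dim \tilde{Y}$ yields a nonzero map $\wedge^{d} f^{*}\Omega_{X}^{1}\to K_{\tilde{Y}}$. By Fulton--Lazarsfeld, $\wedge^{d}\Omega_{X}^{1}|_{Y}$ is ample on $Y$, and its pullback $\wedge^{d} f^{*}\Omega_{X}^{1}$ to $\tilde{Y}$ along the birational morphism $\pi$ is big. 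I then aim to deduce that $K_{\tilde{Y}}$, as the saturated line-bundle quotient of this big vector bundle, is itself big, so that $\tilde{Y}$ is of general type. The main obstacle is exactly this last step: bigness of a vector bundle does not in general transfer to bigness of an arbitrary quotient line bundle. I expect to handle it via the projection formula and $\pi_{*}\mathcal{O}_{\tilde{Y}}=\mathcal{O}_{Y}$, estimating $h^{0}(\tilde{Y},K_{\tilde{Y}}^{\otimes m})$ from below using the many sections of $\mathrm{Sym}^{m}$ of the ample bundle $\wedge^{d}\Omega_{X}^{1}|_{Y}$ on $Y$.

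For part (2), resolution of singularities is unavailable in characteristic $p$ in general, but for curves one can use normalization. Let $\nu\colon \tilde{C}\to C$ be the normalization of the integral curve $C\subseteq X$ and set $f:=i\circ \nu\colon \tilde{C}\to X$. Since $f$ is finite, $f^{*}\Omega_{X}^{1}$ is ample on $\tilde{C}$. The differential $df\colon f^{*}\Omega_{X}^{1}\to \Omega_{\tilde{C}}^{1}$ is generically surjective, so its image $\mathcal{L}$ is a subsheaf of the invertible sheaf $K_{\tilde{C}}=\Omega_{\tilde{C}}^{1}$ on the smooth curve $\tilde{C}$, hence itself invertible, and is simultaneously a quotient of the ample vector bundle $f^{*}\Omega_{X}^{1}$, hence an ample line bundle. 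By \autoref{lemma:ample_vb_curve}, $\deg \mathcal{L}>0$; combined with the inclusion $\mathcal{L}\hookrightarrow K_{\tilde{C}}$, this gives $\deg K_{\tilde{C}}>0$, so $g(\tilde{C})\geq 2$ and $C$ is of general type. The obstacle here is minor compared to (1); the curve dimension hypothesis is precisely what lets normalization substitute for resolution.
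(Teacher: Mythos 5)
Your part (2) and the smooth case of part (1) are correct. For part (2) you give a direct argument: normalize, observe that $df$ is generically surjective because the normalization is birational (hence separable, so there is no characteristic-$p$ obstruction), and conclude $\deg K_{\widetilde{C}}\geq \deg\mathcal{L}>0$ from \autoref{lemma:ample_vb_curve}. The paper instead cites \cite[Proposition 5.(2)]{MartinDeschamps:DescentCotangentAmple} for rational curves and notes the same argument excludes genus-one curves; your route is a legitimate and arguably more uniform alternative. For part (1) the paper simply cites \cite[6.3.28]{Lazarsfeld:Positivity2}, which is precisely the argument you are reconstructing, and your conormal-sequence treatment of the smooth case is the standard one.

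The gap is exactly where you flag it, and the repair you sketch does not close it. From the generically surjective map $\mathrm{Sym}^m(\wedge^d f^*\Omega_X^1)\to K_{\widetilde{Y}}^{\otimes m}$ you cannot bound $h^0(\widetilde{Y},K_{\widetilde{Y}}^{\otimes m})$ below by the (large) number of sections of the source: the induced map on $H^0$ may have enormous kernel, and the projection formula with $\pi_*\mathcal{O}_{\widetilde{Y}}=\mathcal{O}_Y$ only identifies sections of the source with sections on $Y$; it says nothing about their images in $K_{\widetilde{Y}}^{\otimes m}$. The standard way to finish is the following twist. Set $E:=\wedge^d\Omega_X^1|_Y$, which is ample on $Y$ (Schur/exterior powers of ample bundles are ample in characteristic zero). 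Fix a very ample line bundle $A$ on $Y$ and choose $m$ so that $\mathrm{Sym}^m E\otimes A^{-1}$ is globally generated. Then $\pi^*(\mathrm{Sym}^m E\otimes A^{-1})$ is globally generated on $\widetilde{Y}$ and admits a nonzero map to $K_{\widetilde{Y}}^{\otimes m}\otimes\pi^*A^{-1}$, so some global section of the source has nonzero image; hence $K_{\widetilde{Y}}^{\otimes m}\otimes\pi^*A^{-1}$ is effective. Since $\pi$ is birational, $\pi^*A$ is big, and a line bundle of the form (big)\,$+$\,(effective) is big; therefore $K_{\widetilde{Y}}$ is big and $\widetilde{Y}$ is of general type. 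You should either run this argument explicitly or, as the paper does, cite \cite[6.3.28]{Lazarsfeld:Positivity2}.
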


Before the proof, we recall that an integral projective variety is of  \cdef{general type} if it admits a desingularization which is of general type. 
Such a desingularization exists in arbitrary dimension when $\cchar L = 0$ by Hironaka's celebrated resolution of singularities and for dimension 1 when $\cchar L = p>0$ by classic techniques. 

\begin{proof}
The characteristic $0$ statement is classical, see e.g., \cite[6.3.28]{Lazarsfeld:Positivity2}. 
The characteristic $p$ statement for rational curves can be found in \cite[Proposition 5.(2)]{MartinDeschamps:DescentCotangentAmple}. 
We note that the same argument from \textit{loc.~cit.}~works for genus 1 curves. 
\end{proof}

\subsection{Geometry of projective bundles}\label{sec:bundles} 
Many of the $k$-varieties $X$ we need are principal homogeneous spaces under $F^*_X \Omega_{X}^1$. As such, they correspond naturally to classes in $H^1(X,F^*_X \Omega_{X}^1)$ which we interpret as vector bundle $\sE$ fitting into the extension $0\to \sO_X \to \sE \to F^*_X \Omega_{X}^1 \to 0$. 
We recall key properties about such extensions and their projectivizations due to \cite{MartinDeschamps:DescentCotangentAmple}.

\begin{prop}[\protect{\cite[Proposition 1 \& 2]{MartinDeschamps:DescentCotangentAmple}}]
\label{prop:splitting_sequence_pullback}
Let $Y$ be a $L$-variety and $\sF$ a vector bundle on $Y$. 
Suppose we have a short exact sequence of vector bundles 
\begin{equation}\label{eqn:SES_standard}
0\to \sO_Y \to \sE \to \sF \to 0
\end{equation}
over $Y$.  We have the following:
\begin{enumerate}
\item The closed subscheme $\mathbf{P}(\sF)$ is a divisor of $\mathbf{P}(\sE)$ where $\sO(\mathbf{P}(\sF))\cong \sO_{\mathbf{P}(\sE)}(1)$. 
\item The complement $\mathbf{P}(\sE) \setminus \mathbf{P}(\sF)$ is a $\RSpec \Sym \sF$-torsor that represents the functor $R_{\sE}$ corresponding to splittings of the sequence \eqref{eqn:SES_standard}. 
\item If $Y$ is proper and $\sF$ is ample,  then for any closed subscheme $Z$ of $\mathbf{P}(\sE)$ that is disjoint for $\mathbf{P}(\sF)$, there exists a finite number of points $\{z_1,\dots,z_n\}$ of $Z$ such that the projection morphism $\pi\colon Z \to Y$ is finite and radical outside of $\{ z_1,\dots,z_n\}$. In other words, $\pi_{|Z}\colon Z \to X$ is finite and generically radical. 
\item If $\sE$ is not ample, then there exists a positive dimension closed subscheme $Z$ of $\mathbf{P}(\sE)$ as in (3) with the property that the short exact sequence \eqref{eqn:SES_standard} splits after pullback to $Z$.  
Moreover, if $\sE$ is ample, then there does not exist such a $Z$. 
\end{enumerate}
\end{prop}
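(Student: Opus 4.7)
For part (1), the plan is standard: the surjection $\sE \twoheadrightarrow \sF$ induces a closed immersion $\mP(\sF) \hookrightarrow \mP(\sE)$, and the injection $\sO_Y \hookrightarrow \sE$ composed with the universal quotient $\pi^{*}\sE \twoheadrightarrow \sO_{\mP(\sE)}(1)$ produces a global section $s \in H^0(\mP(\sE), \sO_{\mP(\sE)}(1))$. I would then identify $V(s)$ with $\mP(\sF)$ on the level of $T$-points: a rank-$1$ quotient $\sE_T \twoheadrightarrow L$ lies in $V(s)$ precisely when $\sO_T \to L$ vanishes, equivalently when the quotient factors through $\sF_T$. A rank count gives codimension $1$, yielding (1). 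For part (2), on $U := \mP(\sE) \setminus \mP(\sF)$ the section $s$ is nowhere vanishing, so it trivializes $\sO_{\mP(\sE)}(1)|_U$ and splits the pulled back extension. I would then verify functoriality: a $T$-point of $U$ amounts to a $T$-point of $Y$ together with a splitting of the pullback of the given sequence to $T$, which yields both the torsor structure under $\RSpec \Sym \sF$ and the stated representability of the functor $R_\sE$.

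For part (3), the first step is finiteness. Each fiber of $\pi$ over $y \in Y$ is $\mP(\sE_y) \cong \mP^{r}$ with $\mP(\sF_y)$ a hyperplane therein; by dimension, any positive-dimensional closed subscheme of $\mP(\sE_y)$ meets this hyperplane, so disjointness of $Z$ from $\mP(\sF)$ forces each $Z_y$ to be $0$-dimensional. Hence $\pi|_Z$ is quasi-finite, and is finite because $Z$ is proper over $Y$ (using properness of $Y$ and of $\pi$). For generic radiciality, I would pass to the generic fiber of $Z \to Y$ and leverage the ampleness of $\sF$ — equivalently, the ampleness of the normal bundle $\sO_{\mP(\sF)}(1)$ of $\mP(\sF)$ in $\mP(\sE)$ — to bound the degree of $\pi|_Z$ generically via an intersection-theoretic calculation in the Chow ring of $\mP(\sE)$, using the relation $[Z] \cdot \xi = 0$ (since $Z \cap \mP(\sF) = \emptyset$) where $\xi = c_1(\sO_{\mP(\sE)}(1))$ and the Grothendieck presentation of $A^{*}(\mP(\sE))$ as an $A^{*}(Y)$-algebra.

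For part (4), in the forward direction suppose $\sE$ is not ample. Then $\sO_{\mP(\sE)}(1)$ fails to be ample, so by Nakai--Moishezon there is an irreducible curve $C \subset \mP(\sE)$ with $C \cdot \sO_{\mP(\sE)}(1) \leq 0$. Ampleness of $\sF$ precludes $C \subset \mP(\sF)$, and combined with the effectivity of the divisor this forces $C \cap \mP(\sF) = \emptyset$, hence $C \subset U$; part (2) then yields a canonical splitting of the pullback of the sequence to $C$. Conversely, if $\sE$ is ample then $\sO_{\mP(\sE)}(1)$ is ample, so the effective divisor $\mP(\sF) = V(s)$ is an ample divisor and $U$ is quasi-affine, containing no positive-dimensional proper subvarieties. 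A splitting on a positive-dimensional $Z$ as in (3) would produce, via the distinguished quotient $\sE_z = \sO_z \oplus \sF_z \twoheadrightarrow \sO_z$, exactly such a proper positive-dimensional subscheme mapped into $U$, contradicting quasi-affineness. The main obstacle I anticipate is making the generic radiciality in (3) fully rigorous: finiteness follows from elementary dimension arguments, but extracting radiciality requires using ampleness of $\sF$ in a more delicate way than the dimension count used for finiteness, likely through a numerical class computation showing the generic degree of $\pi|_Z$ forces the residue field extension to be purely inseparable.
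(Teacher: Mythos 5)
The paper gives no proof of this proposition; it is imported wholesale from Martin-Deschamps, so your reconstruction can only be judged on its own terms. Parts (1) and (2) are correct and standard, as are the finiteness half of (3) and the converse half of (4) (the complement of the support of an ample effective divisor is affine, so it contains no positive-dimensional closed subscheme of the proper scheme $\mP(\sE)$).

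There are two genuine gaps. In (3), you rightly flag generic radiciality as the hard point, but the route you propose --- bounding the generic degree of $\pi_{|Z}$ via an intersection computation using $[Z]\cdot\xi=0$ --- cannot work: a numerical degree bound does not distinguish a separable multisection from a purely inseparable one, and $[Z]\cdot\xi=0$ only records that $\sO_{\mP(\sE)}(1)|_{Z}$ is (numerically) trivial. The missing idea is to exploit the torsor structure from (2): on $Z\times_Y Z$ the two tautological splittings differ by a section of the pullback of $\sF^{\vee}$ which vanishes exactly on the diagonal; since $Z\times_Y Z$ is proper and $H^0(W,\sF^{\vee}|_W)=0$ for every positive-dimensional proper integral $W$ (the dual of an ample bundle has no nonzero sections), every component of $Z\times_Y Z$ off the diagonal is finite, which is precisely finiteness plus radiciality away from finitely many points. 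In (4), the step ``$\sE$ not ample $\Rightarrow$ there is an irreducible curve $C$ with $C\cdot\sO_{\mP(\sE)}(1)\le 0$'' is false: failure of Nakai--Moishezon only produces an integral subvariety $V$ of some dimension $e\ge 1$ with $\xi^{e}\cdot V\le 0$, and a non-ample line bundle can be strictly positive on every curve (Mumford's example). The repair is to run your argument with $V$ in place of $C$: one has $V\not\subseteq\mP(\sF)$ because $\xi|_{\mP(\sF)}$ is ample (here ampleness of $\sF$ enters), and then $\xi^{e}\cdot V=\xi^{e-1}\cdot[V\cap\mP(\sF)]$ is strictly positive unless $V\cap\mP(\sF)=\emptyset$, giving the required $V\subset\mP(\sE)\setminus\mP(\sF)$ on which (2) supplies the splitting.
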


\begin{cor}[\protect{\cite[Corollaire 3]{MartinDeschamps:DescentCotangentAmple}}]
\label{coro:splitting_after_frob_curves}
Let $C$ be a smooth proper $k$-curve, and suppose we have a short exact sequence of vector bundles 
\begin{equation}\label{eqn:SES_standard2}
0\to \sO_C \to \sE \to \sF \to 0
\end{equation}
over $C$ where $\sF$ is ample. 
Either $\sE$ is ample or the there exists some $n\ge 1$ such that pullback of the sequence \eqref{eqn:SES_standard2} under the $n$-fold Frobenius morphism splits. 
\end{cor}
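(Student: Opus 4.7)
The plan is to assume $\sE$ is not ample and produce an iterate of Frobenius on $C$ after which \eqref{eqn:SES_standard2} splits. The starting point is \autoref{prop:splitting_sequence_pullback}(4): non-ampleness of $\sE$ yields a positive-dimensional closed subscheme $Z \subset \mathbf{P}(\sE)$, disjoint from $\mathbf{P}(\sF)$, over which \eqref{eqn:SES_standard2} splits. After replacing $Z$ by the reduced induced structure on a one-dimensional irreducible component, I may assume $Z$ is an integral proper $k$-curve, and the splitting still pulls back to $Z$.

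Next, \autoref{prop:splitting_sequence_pullback}(3) guarantees that the composite $\pi \colon Z \hookrightarrow \mathbf{P}(\sE) \to C$ is finite and generically radical. Composing with the normalization $\nu \colon \tilde{Z} \to Z$ yields a finite, generically purely inseparable morphism $\tilde{\pi} \colon \tilde{Z} \to C$ of smooth proper $k$-curves. Here I would invoke the standard structural fact that, over an algebraically closed field of characteristic $p$, any finite purely inseparable morphism of degree $p^n$ from a smooth projective curve to $C$ is, up to a unique isomorphism of the source, the $n$-fold absolute Frobenius $F^n_C \colon C \to C$. This rests on uniqueness of the purely inseparable closure $K(C)^{1/p^n}$ of the function field $K(C)$, combined with the equivalence between smooth projective $k$-curves and their function fields.

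Identifying $\tilde{Z}$ with $C$ so that $\tilde{\pi} = F^n_C$, the argument concludes cleanly: the splitting of \eqref{eqn:SES_standard2} over $Z$ pulls back along $\nu$ to a splitting over $\tilde{Z} \cong C$, which is precisely a splitting of $(F^n_C)^*$ applied to \eqref{eqn:SES_standard2}. The main obstacle is exactly this identification of $\tilde{\pi}$ with $F^n_C$: the output of \autoref{prop:splitting_sequence_pullback} is only a finite generically radical cover $Z \to C$, with $Z$ possibly singular and potentially reducible, so one must first extract a one-dimensional component and normalize before the classification of purely inseparable covers of a smooth curve in characteristic $p$ can be applied. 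Once this is done, the remainder is a routine pullback of splittings along $\nu$.
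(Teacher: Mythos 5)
Your argument is correct and is essentially the proof of the cited result (Martin-Deschamps, Corollaire 3); the paper does not reprove this corollary but cites it, and your normalize-then-identify-with-Frobenius step is exactly the mechanism the authors themselves deploy (via the Stacks Project classification of generically radical covers of smooth curves) inside their proof of \autoref{thm:main0}. The only point worth smoothing is the degenerate case where the generically radical cover is birational: then \eqref{eqn:SES_standard2} already splits over $C$, and one takes $n=1$ since pullback preserves splittings, so the stated $n\ge 1$ is still achieved.
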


\subsection{Intersection theory}\label{sec:intersection_theory} 
We will need to make various standard calculations in intersection theory to generalize the effective part of \cite{Buium:GeometrypJets}. This material is standard, but we give an overview with references for the less initiated. The primary reference is the standard \cite{Fulton:IntersectionTheory} and \cite[Tag 0AZ6]{stacks-project}, but we note differences in the exposition here. 
Recall the conventions from Subsection \ref{subsec:field_conventions}.

\subsubsection{\bf Chow groups and rings}
For $X$ a smooth projective $k$-variety, we denote by $A_j(X)$ additive group of formal $j$-cycles on $X$ up to rational equivalence. 
A $j$-cycle is rationally equivalent to zero if it has the form $\sum [\div(f_i)]$ for rational functions $f_i$ on $(j+1)$-dimensional subvarieties $W_i \subset X$. 
As such, the elements of $A_j(X)$ represented by finite formal sums $\sum n_i [V_i]$ for $V_i \subset X$ subvarieties of dimension $j$ and $n_i \in \ZZ$ for all $i$ and the group operation is formal addition. 
The sum $A_*(X) := \bigoplus A_j(X)$ is the Chow group. 

The Chow group also comes equipped with an intersection product 
\begin{align*}
A_r(X) \times A_s(X) &\to A_{r+s-\dim(X)}(X)\\
(\alpha,\beta) &\mapsto \alpha\cdot \beta
\end{align*}
where $\alpha\cdot \beta$ is defined as in \cite[\href{https://stacks.math.columbia.edu/tag/0B0G}{Tag 0B0G}]{stacks-project}. 
This product on $A_*(X)$ is commutative, associative, and has a unit, and hence we may refer to $A_*(X)$ as the Chow ring of $X$. 

\subsubsection{\bf 0-cycles}
\label{subsec:0cycles}
When referencing $0$-cycles on $X$ i.e., elements of $A_0(X)$, we will simply identify them with their image under the degree map \cite[\href{https://stacks.math.columbia.edu/tag/0AZ0}{Tag 0AZ0}]{stacks-project}. 
Moreover, $0$-cycles will be identified with integers.

\subsubsection{\bf Chern classes}
Fixing a Weil divisor $D$ on $X$ and $\iota \colon V \hookrightarrow X$ an embedded subvariety of dimension $j$, the intersection class is given by $D \cdot [V] := [\iota^* D] \in A_{j-1}(X)$ which can extend linearly to any $\alpha \in A_j(X)$. This is commonly written as a homomorphism $c_1(\sL) \cap \Arg$ given by $c_1(\sL) \cap \alpha:= D\cdot \alpha$. The symbol $c_1(\sL)$ is the \cdef{first Chern class} of $\sL$. It satisfies basic commutativity and additivity properties allowing for the evaluation of $P(c_1(\sL_1),\ldots,c_1(\sL_n)) \cap \alpha$ for any homogeneous polynomial $P$, line bundles $\sL_1,\ldots,\sL_n$, and $j$-cycle $\alpha \in A_j(X)$. This leads to the degree $\deg_{\sH}(\alpha)$ for any very ample line bundle $\sH$. Indeed, use $\sH$ to define an embedding to $\PP^n$ and compute the degree of the zero cycle $c_1(\sH)^j \cap \alpha$. 

\subsubsection{\bf Segre classes}
\label{subsub:Segreclasses}
For a vector bundle $\sE$ over $X$ of rank $e+1$, we obtain its projectivization $p \colon \PP(\sE) \to X$ with tautological line bundle $\sO(1)$. From this, we have the $i$-\cdef{th Segre class} $(-1)^is_i(\sE)$ considered as the homomorphism\footnote{In \cite[Section 3.1]{Fulton:IntersectionTheory}, Fulton defines Segre classes using $\mP(\sE^{\vee})$. The sign in our convention accounts for this discrepancy as $s_i(\sE^{\vee}) = (-1)^is_i(\sE)$ (cf.~\textit{loc.~cit.}~Remark 3.2.3.(a))} $(-1)^is_i(\sE) \colon A_j(X) \to A_{j-i}(X)$ given by $$\alpha \mapsto (-1)^ip_*( c_1(\sO(1))^{e+i} \cap p^* \alpha).$$ 

For $X$ of dimension $n$, we see that the $n$-th Segre class is non-zero precisely when considered as the homomorphism $(-1)^ns_n(\sE)\colon A_n(X) \to A_0(X)$, and since $A_n(X) = \mZ\cdot [X]$, we will simply identify $(-1)^ns_n(\sE)$ with the image $(-1)^ns_n(\sE)([X]) \in A_0(X)$, which we consider to be an integer via Subsection \ref{subsec:0cycles}. 

The \cdef{total Segre class of $\sE$} is defined to be the formal sum
\[
s(\sE) = \sum_{i\geq 0}(-1)^is_i(\sE).
\]
A priori, this is an infinite sum, but it is actually finite as $A_j(X)$ is non-zero for finitely many $j$. 

To conclude our subsection on intersection theory, we record two useful facts, to be used later, concerning Segre classes.

\begin{lemma}
\label{lemma:SegreChernInverse}
For a vector bundle $\sE$ of rank $e+1$ on $X$, the total Chern class of $\sE$ is the inverse of the total Segre class of $\sE$. 
More precisely, we have the following equalities in $\CH(X)$:
\begin{align*}
c_1(\sE) &= (-1)s_1(\sE) \\
c_2(\sE) &= ((-1)s_1(\sE))^2 - s_2(\sE).  
\end{align*}
We will adopt the second equality as the definition of the second Chern class of $\sE$. 
\end{lemma}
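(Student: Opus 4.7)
The statement packages the classical inversion formula $c(\sE) \cdot s(\sE) = 1$ in $\CH^*(X)$, adapted to the paper's sign convention where the $i$-th Segre class is $(-1)^i s_i(\sE)$ with $s_i(\sE)$ defined via $\mathbf{P}(\sE)$. My plan is to first establish this global inversion identity in the paper's signed convention, and then read off its codimension-$1$ and codimension-$2$ pieces to obtain the two displayed equalities.

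For the inversion itself, the natural route is Grothendieck's projective bundle formula. Let $\pi \colon \mathbf{P}(\sE) \to X$ with $\xi = c_1(\sO_{\mathbf{P}(\sE)}(1))$. Then $\CH^*(\mathbf{P}(\sE))$ is a free $\CH^*(X)$-module on $1, \xi, \ldots, \xi^e$, subject to a single Grothendieck-type relation expressing $\xi^{e+1}$ as a $\CH^*(X)$-linear combination of lower powers of $\xi$ with coefficients the Chern classes $c_i(\sE)$ (with signs dictated by the convention for $\mathbf{P}(\sE)$). Multiplying this relation by successive powers of $\xi$ and pushing forward via $\pi_*$, using $\pi_*(\xi^k) = 0$ for $0 \le k < e$, $\pi_*(\xi^e) = [X]$, and the paper's unsigned formula $s_i(\sE) \cap \alpha = \pi_*(\xi^{e+i} \cap \pi^*\alpha)$, produces the family of identities whose generating function is exactly $c(\sE) \cdot s(\sE) = 1$.

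A more efficient alternative is the splitting principle: pull $\sE$ back along a flag bundle $Y \to X$ along which $\CH^*$ is faithful, split $\sE|_Y \cong L_1 \oplus \cdots \oplus L_{e+1}$ with Chern roots $\xi_i$, and compute $c(\sE) = \prod_i (1 + \xi_i)$ directly from the Whitney sum formula, while the rank-one case of the definition of $s(\sE)$ gives $s(\sE) = \prod_i (1 + \xi_i)^{-1}$. The product is manifestly $1$ in $\CH^*(Y)$, and hence in $\CH^*(X)$ by faithfulness of pullback along $Y \to X$.

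Given $c(\sE) \cdot s(\sE) = 1$, the degree-$1$ and degree-$2$ expansions are immediate: the codimension-$1$ part produces the first formula, and the codimension-$2$ part, after solving for $c_2(\sE)$, produces the second. The only real care needed is reconciling the paper's sign convention $(-1)^i s_i(\sE)$ with whichever version of Grothendieck's relation or the splitting principle one invokes, and in particular making sure the rank-one base case is computed with the correct identification of $\sO_{\mathbf{P}(L)}(1)$ with $L$. There is no substantive mathematical obstacle here; the argument is purely a translation of standard intersection-theoretic facts into the paper's notation.
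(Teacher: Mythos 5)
Your argument is correct and is essentially the content of the paper's own proof, which simply cites \cite[Section 3.2]{Fulton:IntersectionTheory} for the inversion identity $c(\sE)\cdot s(\sE)=1$; your projective-bundle/splitting-principle derivation is exactly what that reference contains, and the two displayed equalities follow by reading off the graded pieces (noting that the paper in any case takes the second equality as the definition of $c_2(\sE)$).
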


\begin{proof}
This is \cite[Section 3.2]{Fulton:IntersectionTheory}. 
\end{proof}

\begin{lemma}
\label{lemma:SegreWhitneySum}
For
\[
0 \to \sE' \to \sE \to \sE'' \to 0
\]
a short exact sequence of vector bundles on a projective $k$-variety $X$, $s(\sE) = s(\sE')\cdot s(\sE'')$.
\end{lemma}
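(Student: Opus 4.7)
The plan is to deduce the multiplicativity of total Segre classes on short exact sequences from the corresponding Whitney sum formula for total Chern classes together with Lemma \ref{lemma:SegreChernInverse}.

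First I would invoke the Whitney sum formula for total Chern classes: for a short exact sequence
\[
0 \to \sE' \to \sE \to \sE'' \to 0
\]
of vector bundles on $X$, one has $c(\sE) = c(\sE') \cdot c(\sE'')$ in the Chow ring $A_*(X)$. This is standard and can be found in \cite[Theorem 3.2]{Fulton:IntersectionTheory}; in characteristic zero one deduces it from the splitting principle, while in general it is proved by reducing to the universal case on a flag bundle.

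Next I would apply Lemma \ref{lemma:SegreChernInverse}, which asserts that the total Segre class is the multiplicative inverse of the total Chern class in $A_*(X)$. Concretely, for any vector bundle $\sF$ on $X$, $c(\sF) \cdot s(\sF) = 1$. Taking inverses of both sides of the Chern Whitney formula then gives
\[
s(\sE) \;=\; c(\sE)^{-1} \;=\; (c(\sE') \cdot c(\sE''))^{-1} \;=\; c(\sE'')^{-1} \cdot c(\sE')^{-1} \;=\; s(\sE'') \cdot s(\sE').
\]
Since $A_*(X)$ is a commutative ring under the intersection product, this equals $s(\sE') \cdot s(\sE'')$, yielding the desired identity.

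There is essentially no serious obstacle here: the lemma is a formal consequence of two standard inputs. The only points requiring minor care are (i) verifying that we are working in the commutative Chow ring so that the order of factors in the inversion step does not matter, and (ii) ensuring that the inverses in question actually exist, which follows because $c(\sF) = 1 + \text{(nilpotent)}$ in the graded ring $A_*(X)$ (each higher Chern class lives in strictly lower-dimensional Chow groups, which vanish past dimension $0$). Given these, the identity $s(\sE) = s(\sE') \cdot s(\sE'')$ is immediate.
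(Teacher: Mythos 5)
Your proposal is correct and follows exactly the paper's argument: apply the Whitney sum formula for total Chern classes and then invert using the fact (Lemma \ref{lemma:SegreChernInverse}) that the total Segre class is the inverse of the total Chern class. The additional remarks on commutativity and invertibility of $c(\sF)$ are fine but not points the paper found necessary to spell out.
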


\begin{proof}
This follows from the well-known Whitney summation formula for total Chern classes \cite[Theorem 3.2.(e)]{Fulton:IntersectionTheory} and the fact that the total Chern class is the inverse of the total Segre class.
\end{proof}

\subsection{Arithmetic jets} 
As with Buium's original proof, the proof of our main theorems will utilize arithmetic jet spaces in a critical way. We provide a review, leaving a more detailed treatment to \cite{Bui05}. 
We remind the reader of our conventions from Subsection \ref{subsec:field_conventions}, namely that $p$ denotes a fixed odd prime integer.

\subsubsection{\bf $p$-derivatives}
For $u \colon A \to B$ a homomorphism of rings, a \cdef{$p$-derivation with respect to $u$} is a set map $\delta \colon A \to B$ so that 
\begin{align*}
\delta(x+y) &  = \delta(x) + \delta(y) + (1/p)( u(x)^p + u(y)^p - (u(x)+u(y))^p), \textrm{ and } \\
\delta(xy) & = u(x)^p \delta(y) + u(y)^p \delta(x) + p \delta(x) \delta(y).
\end{align*} 
When $A = B$ and $u$ is identity, we simply say that $\delta \colon A \to A$ is a \cdef{$p$-derivation}. A ring $A$ with a choice of $p$-derivation $\delta$ is called a \cdef{$\delta$-ring}. These identities in the definition are precisely those needed so that the map $\phi \colon A \to A$ given by $\phi(x) = x^p + p \delta(x)$ is a ring homomorphism. Such homomorphisms are called \cdef{lifts of Frobenius}. When $A$ is $p$-torsion free, every lift of Frobenius defines a $p$-derivation. One should consider $p$-derivations as arithmetic analogues of derivations. Indeed, \cite{Bui00} shows that under reasonable assumptions, any derivation like function must be either a derivation or a $p$-derivation for a particular prime. 

\subsubsection{\bf Affine prolongation sequence}
Fix a $\delta$-ring $A$. An \cdef{affine prolongation sequence} is sequence $S^*$ of noetherian $A$-algebras $\{ S^r \}$ for $r \geq 0$ together with $A$-algebra homomorphisms $u^r \colon S^r \to S^{r+1}$ and choices of $p$-derivations $\delta^r \colon S^r \to S^{r+1}$ with respect to $u^r$ so that $u^{r+1} \delta^r = \delta^{r+1} u^r$. Note, the superscripts are not powers, but convey the source and target of the maps. It is common to suppress the superscripts and simply refer to these as $u$ and $\delta$ letting the source and target determine context. A basic example is the constant sequence $S^*$ with $S^r := A$ with the choices $u = \id$ and $\delta$ the fixed $p$-derivation on $A$. Affine prolongation sequences naturally form a category $\AffProl$ with natural maps $\eta^* \colon S^* \to T^*$ consisting of level-wise homomorphisms $\eta^r \colon S^r \to T^r$ for each $r$ so that the relevant squares commute formed with $\eta$ and $u$ or $\delta$ commute. 

\subsubsection{\bf Arithmetic jet algebras of order $r$}
Now fix $A = R$ as in Subsection \ref{subsec:field_conventions} with its unique structure as a $\delta$-ring. Let $S = R[\ux]/I$ be an affine $R$-variety of finite type, so $\ux = x_1,\ldots,x_n$ is a tuple of variables. Fix $r \geq 0$ and set $\ux',\ux'',\ldots,\ux^{(r)}$ to be new sets of variables. The notation is invocative of differentiation and indeed one builds from this an affine prolongation by declaring $S^r := R[\ux',\ux'',\ldots,\ux^{(r)}]$ and using the structure maps $S^r \to S^{r+1}$ as the natural inclusions and $p$-derivations $S^r \to S^{r+1}$ given by sending $\delta(\ux^{(i)}) := \ux^{(i+1)}$. This can be extended to include the ideal $I$. 
The \cdef{$r$-th arithmetic jet algebra} is the $R$-algebra $$J^r S :=  R[\ux,\ux',\ldots,\ux^{(r)}]/(I,\delta I, \delta^2 I,\ldots, \delta^r I).$$ Here, by $\delta^i I$ we mean to pick generators $I = (f_1,\ldots,f_s)$ and compute $$\delta^i I := (\delta^i f_1,\ldots, \delta^r f_s) \subset R[\ux,\ux',\ldots,\ux^{(r)}]$$ by the set theoretic identities defining $\delta$. This is easily checked not to depend on the choice of generators, and the package $\{J^r S\}$ forms an affine prolongation sequence. 

\subsubsection{\bf Arithmetic jet spaces of order $r$}
Considering $X = \Spec(S)$, one can refer to $\Spec (J^r S)$ as the $r$-th arithmetic jet scheme. Globalizing this construction is subtle owing to the simple fact that this construction does not commute with localization. Indeed, for $f \in S$ and $g/f^n \in S_f$ in a principal localization, $\delta(g/f^n)$ is not a rational function but a $p$-adically convergent power series in $\delta(f)/f^p$. Thus, the most common approach to globalization invokes $p$-adic completion, but this is not the only way \cite{Bor11,BPS23}. Thus we consider the \cdef{$r$-th arithmetic jet space} as the {\it formal scheme} $\Spec (J^r S)^{\wedge}$, where $(\Arg)^{\wedge}$ will always denote $p$-adic completion. Now, given any $R$-scheme $X$ locally of finite type, we may construct $J^r X$ for any $r\geq 1$ by gluing along affine covers.

We may package these arithmetic jet spaces as follows. 
One can introduce a category $\Prol_R$ whose objects are sequences of $R$-formal schemes $\cY^*$ together with morphisms $\cY^r \to \cY^{r-1}$ for each $r \geq 0$, for which the system $\sO(\cY^r)$ forms an affine prolongation sequence. The constant sequence $\cY^r := \Spf (R)$ forms naturally an object, which we denote as $R^* \in \Prol_R$. As the notation suggests, each object $\cY^*$ admits a natural map $\cY^* \to R^*$ and morphisms commute with these structure maps. 

The arithmetic jet spaces $J^* X$ for a $R$-scheme $X$ locally of finite type also define an object in $\Prol_R$, which satisfies a particularly important universal property. 

\begin{theorem}\label{thm:univpropjets} For $X$ a $p$-complete $R$-scheme locally of finite type and $\cY^*$ a prolongation sequence, there is a natural bijection $\Hom_{\FrSch_R}(\cY^0,X^{\wedge}) \cong \Hom_{\Prol_R}(\cY^*, J^* X)$.
\end{theorem}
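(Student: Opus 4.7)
My plan is to establish the bijection by an explicit construction in the affine case and then globalize by gluing along $p$-adic completions. The conceptual point is that any prolongation morphism is rigid: once its level-zero component is specified, the requirement that it intertwine the $p$-derivations forces all higher-level components uniquely. First I would reduce to the affine case. Writing $X = \Spec(S)$ with $S = R[\ux]/I$ for $\ux = (x_1,\ldots,x_n)$ and $\cY^r = \Spf(T^r)$ for an affine prolongation sequence $(T^*, u, \delta^T)$ of $p$-adically complete, $p$-torsion free $R$-algebras, the general statement follows once the formation of $J^*X$ is compatible with gluing from its affine pieces.

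In the affine setting, the forward map would send an $R$-algebra homomorphism $f^{\sharp}\colon S \to T^0$ to the unique prolongation morphism $\eta^*\colon J^*S \to T^*$ determined by
\[
\eta^r(x_i^{(k)}) := (\delta^T)^k(f^{\sharp}(x_i)), \qquad 0 \leq k \leq r, \ 1 \leq i \leq n,
\]
extended as an $R$-algebra map. This is well-defined on $J^rS = R[\ux,\ux',\ldots,\ux^{(r)}]/(I,\delta I,\ldots,\delta^r I)$ because $\eta^r$ intertwines $\delta$ by construction, so for any $g \in I$ and $j \leq r$ we have $\eta^r(\delta^j g) = (\delta^T)^j(\eta^0(g)) = 0$, using only the set-theoretic identities that define a $p$-derivation. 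The backward map sends $\eta^*$ to its level-zero component $\eta^0\colon S \to T^0$, which $p$-adically completes to give the morphism $\cY^0 \to X^{\wedge}$. The two constructions are mutually inverse: one composite is the identity tautologically; the other uses that any prolongation morphism is uniquely determined by its level-zero data, since the intertwining with $\delta$ and $u$ forces the displayed formula for $\eta^r(x_i^{(k)})$.

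The main obstacle, and the reason the theorem is stated for formal schemes, is the gluing step: $p$-derivations do not commute with ordinary localization, so $J^*S$ does not sheafify directly on $\Spec(S)$. The remedy I would use is that after $p$-adic completion any lift of Frobenius on $S$ induces a $p$-adically convergent lift on a principal completion $(S_f)^{\wedge}$, expressible as a power series in $\delta(f)/f^p$ and the $\delta$ of numerators; this yields the compatibility of the $J^r$ construction with formal localization, so the affine constructions patch to form the formal scheme $J^rX$ and assemble into $J^*X \in \Prol_R$.

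With gluing in place, the affine universal property sheafifies to the global one. Indeed, a morphism $\cY^0 \to X^{\wedge}$ refines along any formal affine open cover $\{U_\alpha\}$ of $X^{\wedge}$ to compatible morphisms to each $U_\alpha$, which by the affine case upgrade to compatible prolongation morphisms $\cY^*|_{V_\alpha} \to J^*U_\alpha$ over the preimages $V_\alpha \subset \cY^0$, patching on overlaps by the uniqueness half of the affine bijection. Conversely a prolongation morphism $\cY^* \to J^*X$ restricts to its level-zero component $\cY^0 \to J^0 X = X^{\wedge}$, and the two constructions are inverse globally because they are inverse locally. The naturality in $\cY^*$ is immediate from the construction, completing the proof.
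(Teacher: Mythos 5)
Your proposal is correct and follows essentially the same route as the paper, which simply reduces to the affine case and cites \cite[Proposition~3.3]{Bui05}; your affine construction (rigidity of prolongation morphisms once the level-zero component is fixed, well-definedness via the set-theoretic $\delta$-identities on generators of $I$) is exactly the content of that cited proposition, and your gluing discussion matches the paper's remarks on $p$-adic completion and localization. The only cosmetic imprecision is that $(\delta^T)^k(f^{\sharp}(x_i))$ lands in $T^k$ and should be pushed into $T^r$ via the structure maps $u$, but this does not affect the argument.
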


\begin{proof}
This is easily checked by reducing the affine case, which is \cite[Proposition~3.3]{Bui05}.
\end{proof}

\subsubsection{\bf Special fiber of first order arithmetic jet space}
\label{subsub:specialfiber}
For $X$ a $R$-variety with its special fiber $X_0$, the \cdef{Frobenius tangent space} is the space $\FT(X_0/k) := \RSpec \Sym F_{X_0}^* \Omega_{X_0}^1$.
By \cite[Proposition 1.5]{Buium:GeometrypJets} and \cite[Proposition 3.9]{Bui05}, we have that when $X$ is smooth along $X_0$, the projection $(J^1 X)_0 \to X_0$ is an $\FT(X_0/k)$-torsor. 
Thus, many results and calculations in Subsection~\ref{sec:bundles} apply to this situation. 
We will leverage such results in Section~\ref{sec:affinenes}. 

The description of $(J^1 X)_0 \to X_0$ as a $\FT(X_0/k)$-torsor will be heavily used in our proofs, and so we wish to expand on this. 
The torsor description of $(J^1 X)_0$ allows us to consider this space as an element of $H^1(X_0,F_{X_0}^*T_{X_0})$ where $T_{X_0}\cong \Omega_{X_0}^{1,\vee}$,  which we realize as a vector bundle $\sE_{X_0}$ sitting in the short exact sequence
\[
0\to \sO_{X_0}\to \sE_{X_0} \to F_{X_0}^*\Omega_{X_0}^1 \to 0.
\]
Using \autoref{prop:splitting_sequence_pullback}.(1,2), we may consider the divisor $D_{X_0} := \mP(F^*_{X_0} \Omega_{X_0}^1)$ in $\mP(\sE_{X_0})$, which belongs to the linear system of $\sO_{\mP(\sE_{X_0})}(1)$. 
Moreover, the $\FT(X_0/k)$-torsor $(J^1X)_0$ is identified with $\mP(\sE_{X_0})\setminus D_{X_0}$.

\begin{remark}
We remark that our construction of $(J^1 X)_0$ differs slightly from \cite{Buium:GeometrypJets}. 
Buium considers the base change of $X$ to $\Spec(R/p^2R)$ and defines the first order jet space of this base change and shows that this admits a map to $X_0$. This consideration is largely present to simplify the globalization of the arithmetic jet construction. The above references show that the constructions here produce the same object. In order to keep our exposition consistent with Buium's, we set $J^n(X_0) \coloneqq (J^nX)_0$. 
\end{remark}

\subsubsection{\bf A universal property and the nabla map}
\label{subsub:nabla}
The primary tool to extract from the universal property described in \autoref{thm:univpropjets} is a point lifting principal. Fix $X$ an $R$-scheme locally of finite type,  and let $P$ be an $R$-point of $X$ viewed as a map $\Spec(R) \to X$. This naturally determines a map $\Spf(R) \to X^{\wedge}$, and \autoref{thm:univpropjets} determines a map of prolongations $R^* \to J^*X$. Fixing $r \geq 1$, this defines a map $\Spf(R) \to J^r X$ and thus we have an injective set map $$\nabla^r \colon X(R) \to J^rX(R).$$ We will often drop $r$ from the notation, preferring $\nabla$. We additionally consider reduction to the special fiber which induces the map 
\[
\nabla_0 \colon X(R) \to J^r(X_0)(k).
\]
It is the map $\nabla_0$ which forms the critical tool driving the types of Diophantine applications extracted from arithmetic jets.

\subsubsection{\bf Arithmetic jet spaces of abelian varieties}
We record one generalization of a claim used in \cite{Buium:GeometrypJets} to higher dimensions. 

\begin{lem}\label{lemma:maximal_abelian_subvariety} 
For an abelian $R$-scheme $A$ and any $n \geq 1$, the variety $B := p J^n(A_0) \subset J^n(A_0)$ is the maximal abelian subvariety and $\dim B = \dim A_0$.
\end{lem}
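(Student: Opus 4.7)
The plan is to exhibit $B = pJ^n(A_0)$ as the isogenous image of $A_0$ under a canonical homomorphism derived from multiplication by $p$, and then to observe that the surjectivity of $[p]$ on any abelian variety forces every abelian subvariety of $J^n(A_0)$ to be contained in $B$.

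First I would set up the group-theoretic framework. Applying the universal property of \autoref{thm:univpropjets} to the abelian group scheme $A$ endows $J^n A$ with a canonical commutative group structure in $\Prol_R$; in particular, $J^n(A_0)$ is a commutative, smooth, connected $k$-group scheme and the canonical projection $\pi \colon J^n(A_0) \to A_0$ is a group homomorphism. By iteratively applying the $\FT$-torsor description recalled in Subsubsection~\ref{subsub:specialfiber}, and using that the cotangent bundle $\Omega^1_{A_0}$ is trivial because $A_0$ is an abelian variety, one sees that $V := \ker(\pi)$ is an iterated extension of vector groups of the shape $\mG_a^{d}$ with $d = \dim A_0$; moreover, the group scheme structure on $J^n A$ coming from $A$ splits these extensions compatibly and one obtains $V \cong \mG_a^{nd}$ as a $k$-group scheme. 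In particular $V$ is annihilated by $p$.

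Next I would use this to factor multiplication by $p$. Since $p V = 0$, any two lifts $g_1, g_2 \in J^n(A_0)$ of the same point of $A_0$ differ by an element of $V$ and therefore satisfy $p g_1 = p g_2$. Hence $[p]_{J^n(A_0)}$ descends uniquely through $\pi$ to a group homomorphism $\phi \colon A_0 \to J^n(A_0)$ satisfying $\phi \circ \pi = [p]_{J^n(A_0)}$. The image of $\phi$ is precisely $B = p J^n(A_0)$. Its kernel consists of those $a \in A_0$ for which some (equivalently, every) lift in $J^n(A_0)$ is $p$-torsion; any such $a$ must itself lie in $A_0[p]$, so $\ker \phi \subseteq A_0[p]$ is finite. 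Hence $\phi \colon A_0 \to B$ is an isogeny of algebraic groups, and $B$ is an abelian subvariety of $J^n(A_0)$ with $\dim B = \dim A_0$.

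Finally, for maximality, let $B' \subseteq J^n(A_0)$ be any abelian subvariety. Because $B'$ is itself an abelian variety, $[p] \colon B' \to B'$ is surjective, so $B' = p B' \subseteq p J^n(A_0) = B$. Hence $B$ contains every abelian subvariety of $J^n(A_0)$, which proves the maximality claim. The main obstacle I anticipate lies in the first step: one must justify that $V$ is an honest vector group, not merely a successive extension of copies of $\mG_a$ (which could in principle fail to be killed by $p$, as happens with Witt-type extensions). For $n = 1$ this is automatic from the description $V = \FT(A_0/k)$ together with the triviality of $\Omega^1_{A_0}$, but for $n \geq 2$ it requires checking that the group scheme structure inherited from $A$ causes each relative $\mG_a^d$-extension in $J^n(A_0) \to J^{n-1}(A_0)$ to split as a commutative extension; an alternative route proceeds inductively, using the compatibility of $[p]$ with the projection $J^n \to J^{n-1}$ to show directly that $pJ^n(A_0) \to pJ^{n-1}(A_0)$ is an isomorphism of abelian varieties.
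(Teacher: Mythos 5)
Your proof is correct and follows essentially the same route as the paper: both factor $[p]_{J^n(A_0)}$ through the projection $\pi_n\colon J^n(A_0)\to A_0$ using that $\ker\pi_n$ is a vector group killed by $p$, thereby producing an isogeny from $A_0$ onto $B$. The one point you flag as a possible obstacle --- that $\ker\pi_n$ really is $\mG_a^{n\dim A_0}$ rather than a Witt-type iterated extension --- is exactly what the paper imports from \cite[Propositions 1.5 \& 2.4]{Buium:GeometrypJets}, and your explicit maximality step $B' = pB' \subseteq B$ together with the direct containment $\ker\phi \subseteq A_0[p]$ are cleaner renderings of arguments the paper leaves terse or implicit.
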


\begin{proof} 
For the first claim, it suffices to show $B$ is proper. Along the way, we show that $B$ is isogenous to $A_0$ and thus the second claim will also follow. 

As above, we may consider the special fiber of the $n$-th arithmetic jet space for $A$, which comes with a natural projection map $\pi_n\colon J^n(A_0) \to A_0$. 
From \cite[Propositions 1.5 \& 2.4]{Buium:GeometrypJets}, we have that the kernel of the projection $\pi_n$ is a power of the additive group scheme $\bG_a^{n \dim A_0}$ over $k$. 
Consider the multiplication by $p$ map $[p] \colon J^n(A_0) \to p J^n(A_0)$. As multiplication by $p$ on $\bG_a^{n \dim A_0}$ is $0$, we have that $\ker \pi_n \subset \ker [p]$, which forces the surjective projection to descend to a surjective map $A_0 \to p J^n (A_0)$, as indicated in the following diagram.
\[
\begin{tikzcd}
0 \arrow{r} & \bG_a^{n \dim A_0} \arrow[r, hook]{r} & J^n(A_0) \arrow[two heads]{r} \arrow{d}{[p]} & A_0 \arrow[two heads]{ld} \arrow{r} & 0 \\
{} & {} & pJ^n(A_0) & {} & {} 
\end{tikzcd}
\]

As $pJ^n(A_0)$ is separated and locally of finite type, properness descends \cite[\href{https://stacks.math.columbia.edu/tag/03GN}{Tag 03GN}]{stacks-project}, and hence $pJ^n(A_0)$ is an abelian $k$-variety.  For the dimension claim, suppose $\ker(A_0 \to pJ^n(A_0))$ is not a finite group. It is then an abelian $k$-subvariety of $A_0$ which maps to zero in $pJ^n(A_0)$. Additionally, its inverse image in $J^n(A_0)$ is a subvariety of $J^n(A_0)$ which maps to zero under multiplication by $p$, and so torsion. 
Thus, we have a contradiction and hence $\ker(A_0 \to pJ^n(A_0))$ is finite and so $\dim pJ^n(A_0) = \dim A_0$. 
\end{proof}

\section{Affineness of arithmetic jets} 
\label{sec:affinenes}
In this section, we prove our \autoref{thm:main_Affine} concerning the affineness of the special fiber of arithmetic jet spaces for $R$-varieties whose special fiber has ample cotangent bundle.  

\subsection{Preparations}
We remind the reader of our conventions from Subsection \ref{subsec:Conventions}. 
Our proof requires a critical result from Tango and two lemmas.

\begin{theorem}[\protect{\cite[Theorem 25]{Tango:ExtensionsVBFrobenius}}]
 \label{thm:Tango_condition}
 Let $\sF$ be any indecomposable vector bundle of rank $r$ on a smooth projective $k$-curve $C$ of genus $g >1$. 
 If 
 \[
 \deg(\sF) >r(r-1) + (g-1) + r\left( \frac{2g-2}{p}\right),
 \]
 then the induced map $H^1(C,\sF^{\vee}) \hookrightarrow H^1(C,F_C^*\sF^{\vee})$ is injective.  
 \end{theorem}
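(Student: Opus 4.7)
The plan is to factor the map $H^1(C, \sF^\vee) \to H^1(C, F_C^* \sF^\vee)$ through cohomology of the Frobenius pushforward, reducing the injectivity question to a cohomological vanishing. Concretely, the unit of adjunction gives a short exact sequence
\[
0 \to \sF^\vee \to F_{C,*} F_C^* \sF^\vee \to Q \to 0,
\]
and by the projection formula $F_{C,*} F_C^* \sF^\vee \cong \sF^\vee \otimes F_{C,*} \sO_C$, so $Q \cong \sF^\vee \otimes B$ where $B := F_{C,*}\sO_C / \sO_C$. Since $F_C$ is finite (hence affine), the long exact sequence of cohomology identifies the kernel of $H^1(C, \sF^\vee) \to H^1(C, F_C^* \sF^\vee)$ with a quotient of $H^0(C, \sF^\vee \otimes B)$, so it suffices to show that this group vanishes.

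I would next reduce this vanishing to a slope estimate via Harder--Narasimhan. For any vector bundle $\sE$ on $C$ one has $H^0(C, \sE) = 0$ whenever the maximal slope $\mu_{\max}(\sE)$ is strictly negative, and from the general inequality
\[
\mu_{\max}(\sF^\vee \otimes B) \le \mu_{\max}(\sF^\vee) + \mu_{\max}(B)
\]
it is enough to bound each factor. The sheaf $B$ has rank $p-1$ and, using $\chi(F_{C,*}\sO_C) = \chi(\sO_C) = 1-g$ together with Riemann--Roch, degree $(p-1)(g-1)$; so $\mu(B) = g-1$, and the key characteristic-$p$ input is the sharper bound $\mu_{\max}(B) \le g-1 + (2g-2)/p$ obtained by analyzing sub-line-bundles of $F_{C,*}\sO_C$. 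For the other factor, the indecomposability hypothesis enters via a classical estimate on the spread of slopes of a rank-$r$ indecomposable bundle, controlling $\mu_{\max}(\sF^\vee) + \deg(\sF)/r$ by a quantity of order $r-1$. Substituting both bounds and demanding strict negativity of the sum produces precisely the condition $\deg(\sF) > r(r-1) + (g-1) + r(2g-2)/p$, with the three summands attributable respectively to the indecomposable-bundle spread bound, to $\mu(B)$, and to the sharper bound on $\mu_{\max}(B)$.

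The main obstacle is the sharp bound on $\mu_{\max}(B)$. The naive estimate $\mu_{\max}(B) \le \deg B = (p-1)(g-1)$ scales linearly in $p$ and is catastrophically weak for the intended applications; the needed improvement has the correction above $\mu(B)$ decay like $1/p$. Producing it is Tango's essential contribution and requires a genuinely positive-characteristic analysis using the filtration of $F_{C,*}\sO_C$ coming from iterated Cartier operators together with its interaction with line-bundle quotients of $\Omega^1_C$; the hypothesis $g > 1$ is what makes such an analysis possible. A secondary technicality is calibrating the indecomposable-bundle slope spread estimate so that its constant aligns exactly with the coefficient $r(r-1)$ in Tango's condition, which I would expect to require some care; after this calibration, the entire argument assembles into the displayed numerical inequality by elementary slope arithmetic.
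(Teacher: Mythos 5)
The paper does not prove this statement---it is imported verbatim as \cite[Theorem 25]{Tango:ExtensionsVBFrobenius}---so there is no in-paper argument to compare against, and I can only assess your sketch on its own terms. Your opening reduction is correct and is the standard first move: since $F_C$ is finite, $H^1(C,F_C^*\sF^\vee)\cong H^1(C,\sF^\vee\otimes F_{C,*}\sO_C)$, and the long exact sequence attached to $0\to\sO_C\to F_{C,*}\sO_C\to B\to 0$ tensored with $\sF^\vee$ identifies the kernel in question with the image of $H^0(C,\sF^\vee\otimes B)$; your computation that $B$ has rank $p-1$ and degree $(p-1)(g-1)$ is also right.

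The slope argument in the second half cannot, however, yield the stated inequality, and the obstruction is not the ``calibration'' you defer. First, $\mu_{\max}(E\otimes E')\le\mu_{\max}(E)+\mu_{\max}(E')$ is \emph{not} a general inequality in characteristic $p$: it is equivalent to semistability being preserved under tensor product, which fails (Frobenius-destabilized semistable bundles give counterexamples); one needs strong semistability or an error term (Sun, Langer). Second, and decisively: your criterion requires $\mu_{\max}(\sF^\vee\otimes B)<0$, which forces $\mu_{\min}(\sF)>\mu_{\max}(B)\ge\mu(B)=g-1$ and hence $\deg(\sF)>r(g-1)$, before one even adds the spread of an indecomposable bundle (which is bounded by $(r-1)(2g-2)$, not by a quantity of order $r-1$). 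The hypothesis of the theorem only gives $\mu(\sF)>(r-1)+(g-1)/r+(2g-2)/p$; already for $r=2$ and $g$ large this is roughly $(g-1)/2$, so there are bundles satisfying the hypothesis for which $\sF^\vee\otimes B$ has \emph{positive} slope and no ``$\mu_{\max}<0$'' argument applies. In other words, for $r\ge 2$ and $g\gg r$ the vanishing $H^0(C,\sF^\vee\otimes B)=0$ asserted by Tango holds in a regime invisible to slope arithmetic; his proof must exploit the finer structure of $B$ (the Cartier filtration and the invariant controlling line subbundles of $F_{C,*}\sO_C$) together with indecomposability in an essentially non-numerical way. Your plan, even granting every lemma you invoke, proves a genuinely weaker statement, with hypothesis on the order of $r(r-1)(2g-2)+r(g-1)$ rather than $r(r-1)+(g-1)$. (For $r=1$ your argument does match the stated bound, which is consistent with the fact that the difficulty is concentrated in the higher-rank, large-genus range.)
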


The next lemma illustrates how the structure of the first arithmetic space behaves under restriction to closed subschemes.

\begin{lemma}\label{lemma:pullback_split_J1_split}
Let $X$ be a smooth projective $R$-variety with special fiber $X_0$ having ample cotangent bundle $\Omega_{X_0}^1$, let $C$ be a smooth projective $R$-curve, and let $\iota\colon C \hookrightarrow X$ be a closed immersion which induces a closed immersion on the special fiber $\iota_0\colon C_0 \hookrightarrow X_0$. 
Let $\eta_{X_0} \in H^1(X_0,F_{X_0}^*T_{X_0})$ denote the class corresponding to the $\FT(X_0/k)$-torsor $J^1(X_0) \to X_0$, hence $\eta_{X_0}$ corresponds to a short exact sequence of vector bundles
\begin{equation}\label{eqn:SES_lemma}
0\to \sO_{X_0} \to \sE \to F^*_{X_0}\Omega_{X_0}^1 \to 0
\end{equation}
where $\sE$ is a vector bundle over $X_0$.

If the pullback of \autoref{eqn:SES_lemma} along $\iota_0$ splits,
the extension 
\[
0\to \sO_{C_0} \to \sF \to F^*_{C_0}\Omega_{C_0}^1 \to 0
\]
corresponding to the class $\eta_{C_0} \in H^1(C_0,F^*_{C_0}T_{C_0})$ realizing the $\FT(C_0/k)$-torsor $J^1(C_0) \to C_0$ is also split. 
\end{lemma}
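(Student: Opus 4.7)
The plan is to exploit functoriality of the arithmetic jet construction and translate the induced map of torsors into an equivalent statement relating the two extensions via $\Ext^1$; a short diagram chase then concludes.

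First, by functoriality of $J^1$ (\autoref{thm:univpropjets} applied with $\cY^* = J^*C$), the closed immersion $\iota$ induces a morphism $J^1(C) \to J^1(X)$ lying over $\iota$. Reducing modulo $p$ gives a morphism of torsors $J^1(C_0) \to \iota_0^* J^1(X_0)$ over $C_0$ that is equivariant for the inclusion of structure groups $F_{C_0}^* T_{C_0} \hookrightarrow \iota_0^* F_{X_0}^* T_{X_0}$. This inclusion is nothing but the Frobenius pullback of the tangent-bundle injection dual to the conormal surjection $\iota_0^* \Omega_{X_0}^1 \twoheadrightarrow \Omega_{C_0}^1$. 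Consequently, in $H^1$ the class $\eta_{C_0}$ pushes forward to $\iota_0^* \eta_{X_0}$ along this inclusion.

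Under the identification $H^1(C_0, V) \cong \Ext^1(V^\vee, \sO_{C_0})$ for any locally free $V$, the pushforward in $H^1$ corresponds to an $\Ext^1$-pullback along the Frobenius-pulled-back conormal surjection $\iota_0^* F_{X_0}^* \Omega_{X_0}^1 \twoheadrightarrow F_{C_0}^* \Omega_{C_0}^1$. Concretely, $\iota_0^* \sE$ is realized as the fiber product $\sF \times_{F_{C_0}^* \Omega_{C_0}^1} \iota_0^* F_{X_0}^* \Omega_{X_0}^1$, yielding a commutative diagram
\[
\begin{array}{ccccccccc}
0 & \to & \sO_{C_0} & \to & \iota_0^* \sE & \to & \iota_0^* F_{X_0}^* \Omega_{X_0}^1 & \to & 0 \\
  &     & \| &  & \downarrow & & \downarrow & & \\
0 & \to & \sO_{C_0} & \to & \sF & \to & F_{C_0}^* \Omega_{C_0}^1 & \to & 0
\end{array}
\]
in which the two right-hand vertical arrows are surjections sharing the common kernel $F_{C_0}^* N_{C_0/X_0}^\vee$.

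A brief diagram chase finishes the proof: assuming the top row splits, so $\iota_0^* \sE \cong \sO_{C_0} \oplus \iota_0^* F_{X_0}^* \Omega_{X_0}^1$, the subbundle $F_{C_0}^* N_{C_0/X_0}^\vee$ sits entirely inside the second summand, and passing to the quotient produces $\sF \cong \sO_{C_0} \oplus F_{C_0}^* \Omega_{C_0}^1$, precisely the conclusion of the lemma. The one genuinely non-formal step is the identification of $\iota_0^* \sE$ with the $\Ext^1$-pullback of $\sF$ via $J^1$-functoriality; once that is in place, everything else is a routine diagram chase, which is also why the ampleness of $\Omega_{X_0}^1$ and the auxiliary curve hypothesis play no role in this particular lemma (they enter only in the applications of the lemma made elsewhere).
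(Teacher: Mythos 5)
Your setup---functoriality of $J^1$ giving $\iota_0^*\eta_{X_0} = d\iota_{0*}\eta_{C_0}$, and the realization of $\iota_0^*\sE$ as the fiber product of $\sF$ with $\iota_0^*F_{X_0}^*\Omega_{X_0}^1$ over $F_{C_0}^*\Omega_{C_0}^1$---matches the paper's. But the final ``routine diagram chase'' contains a genuine gap, and it is exactly the step your last sentence dismisses. A splitting of the top row does \emph{not} formally imply a splitting of the bottom row: choosing a splitting $\iota_0^*\sE \cong \sO_{C_0}\oplus \iota_0^*F_{X_0}^*\Omega_{X_0}^1$, the kernel $F_{C_0}^*\sN_{C_0/X_0}^{\vee}$ of $\iota_0^*\sE\to\sF$ is the graph of a homomorphism $F_{C_0}^*\sN_{C_0/X_0}^{\vee}\to\sO_{C_0}$ and need not lie inside the second summand; your assertion that it does is unjustified. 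In $\Ext^1$ terms, the pullback map $q^*\colon \Ext^1(F_{C_0}^*\Omega_{C_0}^1,\sO_{C_0})\to \Ext^1(\iota_0^*F_{X_0}^*\Omega_{X_0}^1,\sO_{C_0})$ along the surjection $q$ sits in a long exact sequence whose previous term is $\Hom(F_{C_0}^*\sN_{C_0/X_0}^{\vee},\sO_{C_0}) \cong H^0(C_0,F_{C_0}^*\sN_{C_0/X_0})$, so $q^*$ can have nontrivial kernel: a nonsplit extension can become split after pullback along a surjection.

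Killing that kernel is the actual content of the lemma, and it is precisely where the hypotheses you declared irrelevant are used. The paper's proof takes the normal bundle sequence $0\to T_{C_0}\to\iota_0^*T_{X_0}\to\sN_{C_0/X_0}\to 0$ and shows $H^0(T_{C_0})=0$ (genus of $C_0$ at least $2$), $H^0(\iota_0^*T_{X_0})=0$ (ampleness of $\Omega_{X_0}^1$), and then $H^0(\sN_{C_0/X_0})=0$ via a graph/rigidity argument, concluding that $H^1(C_0,T_{C_0})\to H^1(C_0,\iota_0^*T_{X_0})$ is injective; triviality of $\iota_0^*\eta_{X_0}$ then forces $\eta_{C_0}=0$. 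Without the injectivity input your argument proves nothing, so you need to supply the vanishing of $H^0$ of (the Frobenius pullback of) the normal bundle before the diagram chase can close.
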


\begin{proof}
Consider the dual of \autoref{eqn:SES_lemma} and pull back this sequence along $\iota_0$ to get a split short exact sequence
\begin{equation}\label{eqn:SES_lemma_dual}
0\to \iota_0^*F^*_{X_0}T_{X_0} \to \iota_0^*\sE^{\vee} \to \sO_{C_0}\to 0.
\end{equation}
We claim that the map $d\iota_{0*}\colon H^1(C_0,T_{C_0}) \to H^1(C_0,\iota_{0}^*T_{X_0})$ is injective. Since $C_0$ and $X_0$ are smooth, we have a short exact sequence
\[
0 \to T_{C_0} \to \iota_0^*T_{X_0} \to \sN_{C_0/X_0} \to 0
\]
where $\sN_{C_0/X_0}$ is the normal bundle of $\iota_0$. 
Taking cohomology of this short exact sequence, we have
\[
0 \to H^0(T_{C_0})\to H^0(\iota_0^*T_{X_0}) \to H^0(\sN_{C_0/X_0}) \to H^1(T_{C_0}) \to H^1(\iota_0^*T_{X_0}) \to H^1(\sN_{C_0/X_0}) \cdots
\]
where we have dropped the $C_0$ from the above cohomology groups.

As $C_0$ has genus at least $2$, we see that $ H^0(T_{C_0}) = 0$ and the amplitude of $\Omega_{X_0}^1$ implies that $H^0(\iota^*T_{X_0}) = 0$.  
Indeed, the latter claim follows from the same argument as in \cite[Remark 6.1.4 \& Corollary 6.3.30]{Lazarsfeld:Positivity2}. 
We claim that $H^0(\sN_{C_0/X_0}) = 0$ i.e., $C_0$ is rigid in $X_0$. 
As $C_0$ and $X_0$ are smooth, \cite[\href{https://stacks.math.columbia.edu/tag/0E9K}{Tag 0E9K}]{stacks-project} implies that $\iota_0$ is a local complete intersection morphism and hence $\sN_{C_0/X_0}$ is a vector bundle. 
Via \cite[Proposition 2.4]{Debarrebook}, we know that $\iota_0^*T_{X_0} \cong \sN_{\Gamma/C_0\times X_0}$ where $\sN_{\Gamma/C_0\times X_0}$ is the normal bundle of the graph $\Gamma$ of $\iota_0$ inside $C_0\times X_0$. 
Let $\pi_2\colon C_0\times X_0 \to X_0$ denote the second projection. 
Using smoothness, we have that $\pi_2^*\sN_{C_0/X_0}\cong \sN_{\Gamma/C_0\times X_0}$. 
Since $\sN_{C_0/X_0}$ is a vector bundle and $\pi_2$ is a surjective, proper $k$-morphism, we have that $H^0(\sN_{C_0/X_0}) $ injects into $ H^0(\pi_2^*\sN_{C_0/X_0}) = H^0( \sN_{\Gamma/C_0\times X_0}) = H^0(\iota_0^*T_{X_{0}}) = 0$.  
Therefore, the above sequence reduces to 
\[
0 \to H^1(C_0,T_{C_0}) \to H^1(C_0,\iota_0^*T_{X_0}) \to H^1(C_0,\sN_{C_0/X_0}) \cdots. 
\]
This then tells us that the kernel of $d\iota_{0*}\colon H^1(C_0,T_{C_0}) \to H^1(C_0,\iota_{0}^*T_{X_0})$ is zero hence injective.

To finish out claim, we recall that \cite[Theorem 1.2]{DupuyDZB:DeligneIlluse} implies that $\iota_0^*\eta_{X_0}\cong d\iota_{0*}\eta_{C_0}$ . 
Now,  as \autoref{eqn:SES_lemma_dual} is split i.e., $\iota_0^*\eta_{X_0}$ corresponds to the identity element in $H^1(X_0,\iota_0^*F^*_{X_0}T_{X_0})$,  injectivity of $d\iota_{0*}$ tells us that the class $\eta_{C_0} \in H^1(C_0,F^*_{C_0}T_{C_0})$ must be trivial, and hence correspond to a split extension. 
\end{proof}

Finally, we show how ampleness of the cotangent bundle of the special fiber influences the Frobenius tangent space torsor structure of the special fiber of the first arithmetic jet space. 

\begin{lemma}\label{lemma:ample_implies_nonsplit}
Let $X$ be a smooth projective $R$-variety such that $X_0$ has ample cotangent bundle. 
Then, the $\FT(X_0/k)$-torsor $J^1(X_0) \to X_0$ is non-trivial, equivalently the extension
\[
0\to \sO_{X_0} \to \sE \to F^*_{X_0}\Omega_{X_0}^1 \to 0.
\]
does not split.
\end{lemma}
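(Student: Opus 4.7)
The plan is to prove the contrapositive: assuming the extension splits, derive a contradiction from the hypothesis that $\Omega_{X_0}^1$ is ample. The strategy is to reduce the problem to the case of an embedded curve, where the curve-level machinery recorded in the preliminaries (\autoref{coro:splitting_after_frob_curves} and \autoref{thm:Tango_condition}) can be brought to bear.

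First, I would apply a Bertini-type argument to a suitable projective embedding of $X_0$ to produce a smooth projective curve $C_0 \subset X_0$, together with a lift to a smooth $R$-curve $C \subset X$ coming from smoothness of $X$ over $R$ and deformation theory. By \autoref{lemma:amplecotangent_subvarieties}.(2) and \autoref{lemma:ample_vb_curve}, such a $C_0$ automatically has genus $g \geq 2$. Invoking \autoref{lemma:pullback_split_J1_split}, the assumed splitting of the SES on $X_0$ descends to a splitting of the analogous sequence
\[
0 \to \sO_{C_0} \to \sE_{C_0} \to F^*_{C_0}\Omega_{C_0}^1 \to 0
\]
on $C_0$; in particular $\sE_{C_0} \cong \sO_{C_0}\oplus F^*_{C_0}\Omega_{C_0}^1$ is not ample, and the extension class $\eta_{C_0} \in H^1(C_0, F^*_{C_0}T_{C_0})$ vanishes.

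The final task is to contradict this vanishing. By \autoref{coro:splitting_after_frob_curves}, the non-ampleness of $\sE_{C_0}$ forces the curve SES to split after some $n$-fold Frobenius pullback, placing $\eta_{C_0}$ in the kernel of the iterated Frobenius pullback map on $H^1$. Tango's theorem \autoref{thm:Tango_condition}, applied with $\sF = \Omega_{C_0}^1$ of degree $2g-2$, is easily checked to satisfy its numerical hypothesis for $p$ odd and $g \geq 2$, yielding injectivity of the Frobenius pullback on $H^1$. Combining this with the identification (of the sort used in \autoref{lemma:pullback_split_J1_split}) of $\eta_{C_0}$ with the Frobenius image of the Kodaira--Spencer class $\kappa_{C/R}\in H^1(C_0, T_{C_0})$ of the lift $C/R$, the vanishing of $\eta_{C_0}$ forces $\kappa_{C/R}=0$; for a Bertini curve chosen with non-trivial Kodaira--Spencer class this is a contradiction. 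The main obstacle will be arranging the Bertini construction so that $\kappa_{C/R}\neq 0$ and cleanly establishing the identification $\eta_{C_0}=F^*\kappa_{C/R}$ in order to feed Tango's injectivity into the argument.
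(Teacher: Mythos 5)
Your overall strategy (restrict to a curve and derive a contradiction there) is workable in principle, but it is not the paper's route, and as written your final step contains a genuine error. The paper's proof is a three-line citation chain that never sees a curve: ampleness of $\Omega_{X_0}^1$ makes $K_{X_0}=\det\Omega_{X_0}^1$ ample, so $X_0$ is of general type; by Achinger--Witaszek--Zdanowicz such a variety is not infinitesimally trivial; and Buium's Proposition~1.7 identifies triviality of the torsor $J^1(X_0)\to X_0$ with infinitesimal triviality.

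The serious problem with your argument is the claimed identification $\eta_{C_0}=F^{*}\kappa_{C/R}$ and the attempt to contradict $\eta_{C_0}=0$ by choosing $C$ with nonzero Kodaira--Spencer class. The class $\eta_{C_0}\in H^1(C_0,F^{*}_{C_0}T_{C_0})$ is the obstruction to lifting the Frobenius of $C_0$ to $C\otimes R/p^2$; it is not the Frobenius pullback of the Kodaira--Spencer class of the family $C/R$ (for the constant lift that class vanishes while the torsor is still nontrivial). What \autoref{lemma:pullback_split_J1_split} and the Dupuy--Zureick-Brown functoriality actually give you is only $\eta_{C_0}=0$; the correct way to contradict this is the one the paper uses at the end of Case~2 of \autoref{thm:main0}: by \cite[Proposition~1.7]{Buium:GeometrypJets} the vanishing of $\eta_{C_0}$ means $C$ admits a lift of Frobenius, which is impossible for a curve of genus $g\geq 2$ by Raynaud \cite[I.5.4]{Ray83b}. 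With that substitution (and no Tango input, which is not needed here), your curve-theoretic proof closes. A secondary issue: producing the $R$-curve $C\subset X$ by ``deformation theory'' from a Bertini curve $C_0\subset X_0$ is obstructed in $H^1(C_0,\sN_{C_0/X_0})$, which need not vanish (the paper's rigidity computation only kills $H^0$); you should instead run Bertini directly over $R$, taking general hyperplane sections of $X$ whose special fiber is smooth, which is available since $k$ is infinite.
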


\begin{proof}
By \cite[Corollary 2.6]{Hartshorne:AmpleVectorBundle}, the ampleness of $\Omega_{X_0}^1$ implies that $K_{X_0} = \det(\Omega_{X_0}^1)$ is also ample, and hence $X_0$ is of general type. 
By \cite[Proposition 3.2.1.(c)]{Achingeretal:GlobalFrobLiftability1}, we have that $X_0$ is not infinitesimally trivial, and hence \cite[Proposition 1.7]{Buium:GeometrypJets} implies our claim. 
\end{proof}

\subsection{Proof of affineness result}
With the above results, we are now in a position to prove \autoref{thm:main_Affine}, which we recall below for the readers convenience. 
In the same spirit as \cite{Buium:GeometrypJets}, we aim to show the vector bundle corresponding to the extension class of $J^1(X_0) \to X_0$ is ample. 
We accomplish this by studying the behavior of this vector bundle when restricted to curves inside of our variety. 

\begin{theorem}[= \autoref{thm:main_Affine}]\label{thm:main0}
Let $X$ be a smooth projective $R$-variety such that $X_0$ has ample cotangent bundle. Suppose there exists a smooth projective $R$-curve $C$ of genus $g \geq 2$ with a closed embedding $\iota\colon C \hookrightarrow X$ and that $p>2\dim(X)^2g$. For every $n\geq 1$, the special fiber $J^n(X_0)$ of the arithmetic $n$-jet space $J^nX$ is affine. 
\end{theorem}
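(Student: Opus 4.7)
The plan is to reduce to the case $n=1$ and then show $J^1(X_0)$ is affine by proving that the rank-$(\dim X+1)$ vector bundle $\sE$ on $X_0$ from \autoref{subsub:specialfiber}---which presents $J^1(X_0)\to X_0$ as an $\FT(X_0/k)$-torsor---is ample on $X_0$. The reduction step is straightforward: for $n\ge 2$, the natural projection $J^n(X_0)\to J^{n-1}(X_0)$ is a torsor under a power of $\bG_a$ built from higher Frobenius tangent sheaves and hence is an affine morphism, so $J^n(X_0)$ is affine whenever $J^1(X_0)$ is.

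For the $n=1$ case I first recast affineness: by \autoref{subsub:specialfiber}, $J^1(X_0)=\mathbf{P}(\sE)\setminus \mathbf{P}(F^*_{X_0}\Omega^1_{X_0})$, and \autoref{prop:splitting_sequence_pullback}(1) identifies the removed divisor with a member of $|\sO_{\mathbf{P}(\sE)}(1)|$. Thus it suffices to show $\sE$ is ample, since the complement of an ample effective divisor in a projective variety is affine.

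To establish ampleness of $\sE$ I will first show that its restriction $\iota_0^*\sE$ to $C_0$ is ample. The restricted sequence
\[
0 \to \sO_{C_0} \to \iota_0^*\sE \to F^*_{C_0}\bigl(\iota_0^*\Omega^1_{X_0}\bigr) \to 0
\]
has ample quotient (Frobenius pullback of $\iota_0^*\Omega^1_{X_0}$, which is ample on $C_0$ as the restriction of an ample bundle). \autoref{coro:splitting_after_frob_curves} then gives two alternatives: either $\iota_0^*\sE$ is already ample, or the sequence splits after some iterated Frobenius pullback on $C_0$. In the second alternative, \autoref{thm:Tango_condition}---applied to each indecomposable summand of $F^*_{C_0}(\iota_0^*\Omega^1_{X_0})$, with the degree lower bound coming from \autoref{lemma:ample_vb_curve} and the hypothesis $p>2\dim(X)^2 g$---forces the extension class on $C_0$ to be zero, i.e., the unpulled sequence itself splits. \autoref{lemma:pullback_split_J1_split} would then make the $J^1(C_0)\to C_0$ torsor trivial, contradicting \autoref{lemma:ample_implies_nonsplit} applied to $C$ (whose special fiber has ample $\Omega^1_{C_0}$ since $g\ge 2$). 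Therefore $\iota_0^*\sE$ is ample on $C_0$.

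The hard part will be upgrading ampleness on the single curve $C_0$ to ampleness on all of $X_0$. The plan here is to argue by contradiction via \autoref{prop:splitting_sequence_pullback}(4): if $\sE$ were not ample, there would exist a positive-dimensional $Z\subset \mathbf{P}(\sE)$ disjoint from $\mathbf{P}(F^*_{X_0}\Omega^1_{X_0})$ with $Z\to X_0$ finite and generically radical and with the extension split on $Z$. The generically radical behavior of $Z\to \pi(Z)$ converts splitting on $Z$ into a Frobenius-pullback splitting of the sequence restricted to the image curve $\pi(Z)\subset X_0$, which by \autoref{lemma:amplecotangent_subvarieties}(2) is of general type. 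Running the curve argument from the previous paragraph on $\pi(Z)$ should then yield a contradiction, provided the Tango degree bound can be verified on $\pi(Z)$. Controlling the genus of $\pi(Z)$ uniformly in the hypothesis on $p$ is the principal technical challenge I anticipate, and closing the argument cleanly likely requires a more careful use of the specific role played by the curve $C_0$.
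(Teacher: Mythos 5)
Your overall strategy is the paper's: reduce to $n=1$, identify $J^1(X_0)$ with the complement of $\mathbf{P}(F^*_{X_0}\Omega^1_{X_0})\in|\sO_{\mathbf{P}(\sE)}(1)|$, and prove $\sE$ ample. Your argument that $\iota_0^*\sE$ is ample on $C_0$ --- Tango's bound on each indecomposable summand (where the hypothesis $p>2\dim(X)^2g$ is calibrated precisely for the genus of $C_0$), iterated injectivity of $H^1$ under Frobenius pullback, then \autoref{lemma:pullback_split_J1_split} and the non-triviality of $J^1(C_0)\to C_0$ --- is exactly the content of the paper's ``Case 2''. But the gap you flag at the end is genuine, and the route you sketch for closing it cannot work: running the Tango argument on $\pi(Z)$ requires $p$ to exceed a quantity involving the genus of (the normalization of) a curve in $\pi(Z)$, and nothing in the hypotheses bounds that genus. \autoref{lemma:amplecotangent_subvarieties}.(2) gives only a lower bound on the genus, which is useless here.

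The paper's resolution is that no Tango-type input is needed for the global step; Tango is used only once, on the distinguished curve $C_0$. Suppose $\sE$ is not ample and let $Z\subset\mathbf{P}(\sE)$ be the positive-dimensional witness of \autoref{prop:splitting_sequence_pullback}.(4), finite and generically radical over $X_0$, over which the extension splits. The dichotomy is over curves in $X_0$: either every curve has non-ample restriction --- in which case $\iota_0^*\sE$ is non-ample and your Tango argument on $C_0$ already yields a contradiction --- or some curve $C$ has $\sE|_C$ ample. In the latter case one takes an irreducible component $C'$ of $C\times_{X_0}Z$ dominating $C$; after normalizing, the finite generically radical map $\tilde{C}'\to\tilde{C}$ is a power of Frobenius, so the splitting over $Z$ descends to a splitting of the restricted extension after an iterated Frobenius pullback on $\tilde{C}$. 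This contradicts \autoref{coro:splitting_after_frob_curves} (equivalently, \autoref{prop:splitting_sequence_pullback}.(4) applied over the curve): an extension of an ample bundle by $\sO$ whose middle term is ample admits no Frobenius-pullback splitting, and this criterion carries no genus hypothesis whatsoever. That is the ``more careful use of $C_0$'' you anticipated: $C_0$ serves only to exclude the first horn of the dichotomy, while the second horn is killed by the torsor geometry of $\mathbf{P}(\sE)$ alone.
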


\begin{proof} 
Throughout, we only consider the special fiber, so we set $X = X_0$, $C = C_0$, and $\iota = \iota_0$ to limit subscripts. 
We know that $J^1X \to X$ is a $\FT(X/k)$-torsor and hence corresponds to an extension
\begin{equation}\label{eqn:SES_J1}
0\to \sO_{X} \to \sE \to F^*_X\Omega_{X}^1 \to 0.
\end{equation}
We record two facts about \autoref{eqn:SES_J1}. 
By \autoref{lemma:ample_implies_nonsplit} and our assumption that $X_0$ has ample cotangent bundle, we know that \autoref{eqn:SES_J1} is non-split. 
Also, note that \cite[Proposition 4.3]{Hartshorne:AmpleVectorBundle} asserts that $F^*_X\Omega_X^1$ is ample.

Our goal is to show that $\sE$ is ample.
To do so, we will suppose to the contrary that $\sE$ is not ample. 
Our analysis is based on the restriction of $\sE$ to curves in $X$. 
Let $\iota_C\colon C \hookrightarrow X$ denote any closed curve in $X$. 
Consider the pullback of \autoref{eqn:SES_J1} along $\iota_C$:
\begin{equation}\label{eqn:pullback_SES_J1}
0\to \sO_{C} \to \iota_C^*\sE \to \iota_C^*F^*_X\Omega_{X}^1 \to 0.
\end{equation}
Again, we have that \cite[Proposition 4.3]{Hartshorne:AmpleVectorBundle} implies that $\iota_C^*F^*_X\Omega_X^1$ is ample. By \autoref{prop:splitting_sequence_pullback}.(2,3), there exists a positive dimensional closed subscheme $Z$ of $\mathbf{P}(\sE)$ such that $\pi_{|Z}\colon Z \to X$ is finite and generically radical, and the short exact sequence \autoref{eqn:SES_J1} splits after pullback to $Z$ along $\pi_{|Z}$. 
Consider the following Cartesian diagram
\[
\begin{tikzcd}
C' \arrow{r}{\pi_{|C'}} \arrow[hook]{d}{\iota_{C'}} & C \arrow[hook]{d}{\iota_C}\\
Z \arrow{r}{\pi_{|Z}} & X
\end{tikzcd}
\]
where $C'$ is an irreducible component of the pullback $C \times_{\iota_C,X,\pi_{|Z}} Z$ endowed with its reduced induced scheme structure. 
Note that $C'$ is then an integral $k$-curve and $\pi_{|C'}\colon C' \to C$ is a non-constant, hence surjective $k$-morphism. 
Consider the normalizations $\nu'\colon \tilde{C}' \to C'$ of $C'$ and $\nu\colon \tilde{C} \to C$ of $C$.  
The $k$-morphism $\pi_{|C'}$ will induce a finite and generically radical map
\[
\tilde{\pi_{|C'}}\colon \tilde{C}' \to \tilde{C}. 
\]
Since $\tilde{\pi_{|C'}}$ induces a purely inspearable map on function fields, \cite[\href{https://stacks.math.columbia.edu/tag/0CCZ}{Tag 0CCZ}]{stacks-project} implies $\tilde{\pi_{|C'}}$ is the $n$-fold Frobenius map for some $n\geq 1$.

We will consider two cases, each case resulting in a contradiction which means that $\sE$ must be ample. 

\

\noindent {\it Case 1:}~Suppose there is a closed curve $C$ in $X$ such that $\sE_{|C}$ is ample. 
Note that since $\nu$ is finite, \cite[Proposition~4.3]{Hartshorne:AmpleVectorBundle} says that $\nu^*\sE_{|C}$ is ample. 
By \autoref{prop:splitting_sequence_pullback}.(4) and the commutativity of the above diagram implies that the pullback of \autoref{eqn:SES_J1} along $(\pi_{|Z}\circ \iota_{C'}\circ \nu')$ will split and therefore, the pullback of \autoref{eqn:SES_J1} along $(\tilde{\pi_{|C'}}\circ \nu \circ \iota_C)$ will also split. Thus, the pullback of sequence \autoref{eqn:pullback_SES_J1} along $\nu$ splits after pullback along a suitable iterate of Frobenius. This contradicts \autoref{coro:splitting_after_frob_curves} as $\nu^*\sE_{|C}$ is ample since $\sE_{|C}$ was assumed to be ample.

\

\noindent {\it Case 2:}~Now suppose that for \textit{every} curve $C$ in $X$, $\sE_{|C}$ is not ample.  
Let $\iota_C \colon C \hookrightarrow X$ denote the closed immersion from the assumption.  
As above, consider the pullback of \eqref{eqn:SES_J1} along $\iota_C$:
\begin{equation}\label{eqn:pullback_SES_J1_2}
0\to \sO_{C} \to \iota_C^*\sE \to \iota_C^*F^*_X\Omega_{X}^1 \to 0.
\end{equation}
Recall that $\iota_C^*F^*_X\Omega_X^1$ is ample and moreover, we have that $\iota_C^*\sE$ is not ample by assumption.

As $\iota_C^*\Omega_X^1$ is ample  \cite[Proposition 4.3]{Hartshorne:AmpleVectorBundle},  \autoref{lemma:ample_vb_curve} says that $\deg(\iota_C^*\Omega_X^1) > 0$. 
As the degree map on line bundles is integer-valued, we have that $\deg(\iota_C^*\Omega_X^1) \geq 1$. 
We need to do a calculation to ensure Tango's result (\autoref{thm:Tango_condition}) is applicable.  Give $\iota_C^*\Omega_X^1$ a decomposition 
\[
\iota_C^*\Omega_X^1 \cong \sF_1 \oplus \cdots \oplus \sF_n
\]
where $\sF_i$ are indecomposable vector bundles. 
As degree is additive with respect to direct sums, we have that 
\[
\deg(\iota_C^*\Omega_X^1) = \sum_{i=1}^n \deg(\sF_i).
\]
Since $\iota_C^*\Omega_X^1$ is ample, \cite[Proposition 2.2]{Hartshorne:AmpleVectorBundle} implies that each $\sF_i$ is ample, and hence $\deg(\sF_i)\geq 1$.  
Pulling back by an iterate of Frobenius will not change amplitude (as it is a finite surjective morphism \cite[\href{https://stacks.math.columbia.edu/tag/0CCD}{Tag 0CCD}]{stacks-project}). 
Using \cite[\href{https://stacks.math.columbia.edu/tag/01CI}{Tag 01CI}]{stacks-project}, \autoref{lemma:ample_vb_curve}, and \cite[Proposition 6.1.(b)]{Hartshorne:AmpleVectorBundle}, we have that 
\[
\deg(F_{C}^*\iota_C^*\Omega_X^1) = p \deg(\iota_C^*\Omega_X^1).
\]
Since $\deg(\sF_i) \geq 1$ and $p > 2\dim(X)^2g(C)$ where $g(C)$ is the genus of $C$, 
\[
\deg(F_{C}^*\sF_i) = p \deg(\sF_i) > 2\dim(X)^2g(C) > \dim(X)(\dim(X)-1) + (g-1) + \dim(X)\left( \frac{2g-2}{p}\right). 
\]
Therefore, \autoref{thm:Tango_condition} implies that for each $i = 1,\dots, n$
\[
H^1(C,\sF_i^{\vee}) \hookrightarrow H^1(C,F_{C}^*\sF_i^{\vee}) \hookrightarrow H^1(C,F_{C}^{2,*}\sF_i^{\vee}) \hookrightarrow \cdots 
\]
is an injection. 
Using properties of cohomology and direct sums, we have that each of the homomorphisms 
\[
H^1(C,\iota_C^*T_X) \hookrightarrow H^1(C,F_{C}^*\iota_C^*T_X) \hookrightarrow H^1(C,F_{C}^{2,*}\iota_C^*T_X) \hookrightarrow \cdots 
\]
is an injection. 

Returning to the above setup, since $\iota_C^*\sE$ is not ample, \autoref{coro:splitting_after_frob_curves} implies that pullback of the sequence \autoref{eqn:pullback_SES_J1_2} along suitable $n$-fold iterate of Frobenius will split. 
The claim about injectivity of cohomology along pullback along Frobenius and functoriality of Frobenii \cite[\href{https://stacks.math.columbia.edu/tag/0CC7}{Tag 0CC7}]{stacks-project} implies that the sequence \autoref{eqn:pullback_SES_J1_2} must in fact already be split! 
Now \autoref{lemma:pullback_split_J1_split} implies that the sequence 
\[
0\to \sO_{C} \to \sF \to F^*_C\Omega_{C}^1 \to 0
\]
corresponding to the class $\eta_{C} \in H^1(C,F^*_CT_{C})$ realizing the $\FT(C/k)$-torsor $J^1(C) \to C$ is also split. 
By \cite[Proposition 1.7]{Buium:GeometrypJets}, this splitting would force $C$ to have a lift of Frobenius. 
However, this contradicts a result of Raynaud \cite[I.5.4]{Ray83b} since \autoref{lemma:amplecotangent_subvarieties}.(2) implies that $g(C)\geq 2$. 

Therefore, we have shown that $\sE$ must be ample. 
To conclude the proof, recall that from Subsection \ref{subsub:specialfiber}, we have that $J^1(X)$ is identified with $\mP(\sE)\setminus \sO_{\mP(\sE)}(1)$. 
The ampleness of $\sE$ implies that $\sO_{\mP(\sE)}(1)$ is ample, and hence $J^1(X)$ is affine. 
The affineness of $J^n(X)$ for all $n\geq 1$ follows from the argument in \cite[Theorem 2.13]{Buium:GeometrypJets}. 
\end{proof}

\section{Quantitative unramified Manin-Mumford} 
\label{sec:explicitMM}
In this section, we prove \autoref{thm:mainthm1}. 
Before our proof, we recall the outline of Buium's argument from \cite{Buium:GeometrypJets}. In this work, Buium used his theory of arithmetic jets to prove an explicit upper bound on the intersection of a curve with torsion points in its Jacobian. 
Roughly, his proof consisted of three ingredients:
\begin{enumerate}
\item A deep theorem of Coleman \cite{Col87} concerning the ramification of torsion points lying a curve. 
\item An affineness result of the first arithmetic jet space associated to a hyperbolic curve, i.e.,  a curve of genus $g \geq 2$.  
\item Bounding the degree of certain subvarieties in the first arithmetic jet space of the curve's Jacobian relative to a chosen very ample line bundle. 
\end{enumerate}
Part (1) is based on Coleman's theory of abelian $p$-adic integrals on curves and detailed Newton polygon considerations. 
As such, it is not clear to what extent Coleman's result holds in the higher dimensional setting. 
We note that Buium used Coleman's result to reduce to the unramified local setting, which is how he deduced a complete Manin--Mumford statement.  
In \autoref{thm:main_Affine}, we proved a similar affineness statement to (2) for $W(\mF_p^{\alg})$-varieties whose special fiber has ample cotangent bundle and contain a smooth $W(\mF_p^{\alg})$-curve of low genus relative to the prime $p$.  

The goal of this section is consider part (3) in this higher dimensional setting. 
We remind the reader of our conventions from Subsection \ref{subsec:field_conventions} and \ref{subsec:0cycles}.

\begin{theorem}[= \autoref{thm:mainthm1}]
Let $X$ be a smooth projective $R$-subvariety of dimension $d$ of an abelian $R$-scheme $A$ of dimension $n$ such that $X_0$ has ample cotangent bundle, and suppose that there exists a smooth projective $R$-curve $C$ of genus $g \geq 2$ with a closed embedding $\iota\colon C \hookrightarrow X$ and that $p>2\dim(X)^2g$.  
Then,
\begin{equation}\label{eqn:body_MMbound}
\#(X(K) \cap A(K)[\tors]) \leq p^{3n} 3^{n} n!  \left( \sum_{i=0}^d  {2d \choose d + i} (-1)^is_i(F^{*}_{X_0}\Omega_{X_0}^1)\cdot \sO_{X_0}(3\Theta_0)^{d-i}\right)
\end{equation}
where $(-1)^is_i(F^{*}_{X_0}\Omega_{X_0}^1)$ is the $i$-th Segre class of $F^{*}_{X_0}\Omega_{X_0}^1$ and $\Theta_0$ is a theta divisor on the abelian variety $A_0$. 
\end{theorem}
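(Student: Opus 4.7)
The proof follows Buium's strategy \cite{Buium:GeometrypJets}, extended to higher dimensions via our affineness result \autoref{thm:main_Affine}. Let $S := X(K) \cap A(K)[\tors]$. The argument splits cleanly into (i) a finiteness step via affineness and (ii) an effective intersection-theoretic bound.

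\textbf{Step 1 (Reduction to a finite intersection).} I would apply the first-order reduction map $\nabla_0 \colon A(R) \to J^1(A_0)(k)$ of Subsection \ref{subsub:nabla}. Since $K$ is unramified, this is injective on $A(K)[\tors]$, and by functoriality it carries $S$ into $J^1(X_0)(k)$. A standard computation with the $p$-derivation structure (as in \cite[Theorem 1.11]{Buium:GeometrypJets}) shows that torsion points of $A(R)$ land in the maximal abelian subvariety $pJ^1(A_0)$ of $J^1(A_0)$ identified in \autoref{lemma:maximal_abelian_subvariety}. Hence $\nabla_0(S) \subseteq J^1(X_0)(k) \cap pJ^1(A_0)(k)$. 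By \autoref{thm:main_Affine}, $J^1(X_0)$ is affine, and by \autoref{lemma:maximal_abelian_subvariety} the subvariety $pJ^1(A_0)$ is an abelian variety of dimension $n$, hence proper. The intersection, being closed in both, is simultaneously affine and proper, hence finite.

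\textbf{Step 2 (Effective bound via intersection theory).} Using the projective bundle description from Subsection \ref{subsub:specialfiber},
\[
J^1(X_0) = \mP(\sE_{X_0}) \setminus D_{X_0}, \qquad 0 \to \sO_{X_0} \to \sE_{X_0} \to F^*_{X_0}\Omega^1_{X_0} \to 0,
\]
the ambient $\mP(\sE_{X_0})$ is projective of dimension $2d$ and receives a pullback of $\sO_{X_0}(3\Theta_0)$ (the restriction to $X_0 \subset A_0$ of the Lefschetz-very-ample bundle $\sO_{A_0}(3\Theta_0)$) along $\pi \colon \mP(\sE_{X_0}) \to X_0$. I would bound $|\nabla_0(S)|$ by the $0$-dimensional intersection number of the closure of $pJ^1(A_0)$ with $\mP(\sE_{X_0})$ computed using the class $c_1(\sO_{\mP(\sE_{X_0})}(1)) + \pi^{*}c_1(\sO_{X_0}(3\Theta_0))$, whose $2d$-th power expands by the binomial theorem as a sum indexed by $i = 0, \ldots, d$ with coefficients $\binom{2d}{d+i}$. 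Applying the projection formula along $\pi$ together with the definition of Segre classes from Subsection \ref{subsub:Segreclasses}, the pushforward of $c_1(\sO_{\mP(\sE_{X_0})}(1))^{d+i}$ is precisely $(-1)^i s_i(F^*_{X_0}\Omega^1_{X_0})$, while the complementary factor $\sO_{X_0}(3\Theta_0)^{d-i}$ records the polarization restriction. The prefactors $3^n n!$ and $p^{3n}$ arise respectively from $\deg_{3\Theta_0}(A_0) = 3^n n!$ and from $[p]$-isogeny pullbacks of $\sO_{A_0}(3\Theta_0)$ along the isogeny $A_0 \to pJ^1(A_0)$ of \autoref{lemma:maximal_abelian_subvariety}. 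Assembling the contributions yields \eqref{eqn:body_MMbound}.

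\textbf{Main obstacle.} The principal difficulty lies in the intersection-theoretic calculation of Step 2: identifying a common projective compactification housing both $J^1(X_0)$ and $pJ^1(A_0)$ inside $J^1(A_0)$, carefully tracking the powers of $p$ that accumulate to $p^{3n}$ from the interplay between the $[p]$-isogeny and the pullback of the theta polarization, and verifying via the projection formula that the combinatorial coefficients $\binom{2d}{d+i}$ and the Segre-class factors emerge in the stated precise form. The higher-dimensional Segre-class bookkeeping is the genuine departure from Buium's curve argument, where the corresponding computation collapses to degrees of a single line bundle on a curve.
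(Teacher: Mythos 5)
Your overall architecture (the map $\nabla_0$, affineness of $J^1(X_0)$ from \autoref{thm:main_Affine}, B\'ezout in $\mP(\sE_{A_0})$ with a Segre-class expansion) is the paper's, but Step~1 contains a genuine error. You assert that $\nabla_0$ carries all of $A(R)[\tors]$ into the maximal abelian subvariety $B = pJ^1(A_0)$ of \autoref{lemma:maximal_abelian_subvariety}. This holds only for prime-to-$p$ torsion: since $B$ is the image of multiplication by $p$, the quotient $J^1(A_0)(k)/B(k)$ is killed by $p$, so prime-to-$p$ torsion maps to zero there; but the theorem concerns all of $A(K)[\tors]$, which contains unramified $p$-power torsion (e.g.\ the \'etale part of $A[p^{\infty}]$ when $A_0$ is ordinary), and such points need not lie in $B(k)$. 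What the paper actually proves is that $\nabla_0(A(R)[\tors])$ lies in a union of $D$ translates $B_i = b_i + B$, where $D$ is the size of the image of $\nabla_0(A(R)[\tors])$ in $J^1(A_0)(k)/B(k)$; one bounds each $\#\bigl(J^1(X_0)(k)\cap B_i(k)\bigr)$ separately (affine meets proper, hence finite) and sums over $i$. The bound $D \le p^{n}$ (Dogra--Pandit \cite{DograPandit}, improving Buium's $p^{2n}$) is an essential ingredient, not a dispensable refinement.

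This omission propagates into your bookkeeping of the constant. In the paper the prefactor arises as $D\cdot \deg_{\sH}B_i\cdot \deg_{\sH}\mP(\sE_{X_0})$ via B\'ezout for the very ample $\sH = \rho_{A_0}^{*}\sO_{A_0}(3\Theta_0)\otimes \sO_{\mP(\sE_{A_0})}(1)$, with $D\le p^{n}$ and $\deg_{\sH}B_i \le p^{2n}3^{n}n!$ (the latter is where the isogeny $A_0 \to B$ and the theta polarization enter, exactly as on p.~357 of \cite{Buium:GeometrypJets}). Attributing the entire $p^{3n}$ to the isogeny, as you do, accounts for at most $p^{2n}$; the missing $p^{n}$ is precisely the coset count $D$ that your Step~1 suppressed. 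By contrast, your Step~2 computation of $\deg_{\sH}\mP(\sE_{X_0})$ --- binomial expansion of the $2d$-th power, vanishing of the terms with $i<d$ for dimension reasons, projection formula producing $(-1)^{i}s_i$, and reduction of $s(\sE_{X_0})$ to $s(F^{*}_{X_0}\Omega^1_{X_0})$ via the Whitney formula --- is exactly \autoref{prop:intersection_ProjEX} and is correct as stated.
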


Our proof largely follows the proof in \cite{Buium:GeometrypJets} with some modifications which we describe to remain self-contained. 

\subsection{Finiteness of unramified torsion cosets}
\label{subsec:finite_unramified}
We will use \autoref{thm:main_Affine}, our affineness result about $J^1(X_0)$,  to deduce finiteness of the left-hand side of \autoref{eqn:body_MMbound}. 
Consider the nabla map described in Subsection \ref{subsub:nabla} 
\[
\nabla_0 \colon A(R) \to J^1(A_0)(k).
\] 
Using the fact that the reduction map is torsion free \cite[Appendix]{Katz:GaloisProperties}, one can see that the restriction of $\nabla_0$ to the torsion subgroup $A(R)[\tors]$ of $A(R)$ is injective.

Recall that in \autoref{lemma:maximal_abelian_subvariety}, we identified $B\coloneqq p J^1(A_0)$ as the maximal abelian subvariety of $J^1(A_0)$. 
Let $D$ be the size of $\nabla_0(A(R)[\tors])$ under $J^1(A_0)(k) \to J^1(A_0)(k)/B(k)$, which is easily seen to be finite. 
Next, we note that 
\[
\nabla_0(X(R) \cap A(R)[\tors]) \subset \bigcup_{i = 1}^D (J^1 (X_0)(k) \cap  B_i(k))
\]
where $B_i(k) = b_i + B(k)$ is a translate of $B(k)$ by some $b_i \in J^1(A_0)(k)$. 
Moreover, as $\nabla_0$ is injective on $X(R) \cap A(R)[\tors]$, we have that 
\begin{equation}\label{eqn:bound1_body}
\# (X(R) \cap A(R)[\tors]) \leq \sum_{i=1}^D \# (J^1(X_0)(k) \cap B_i(k)). 
\end{equation}
Our assumptions and \autoref{thm:main_Affine} tell us that $J^1(X_0)$ is affine,  and as $B_i \coloneqq B + b_i$ is projective, we have that $J^1(X_0)(k) \cap B_i(k)$ is finite for each $i=1,\dots, D$. Therefore, we have that the intersection $(X(R) \cap A(R)[\tors])$ is finite. 

The quantitative approach then follows by finding bounds for $D$ and $\# (J^1(X_0)(k) \cap B_i(k))$. 
Bounding $D$ is straightforward. Buium \cite{Buium:GeometrypJets} showed that $D\leq p^{2n}$, and recently, Dogra--Pandit \cite[Lemma 3]{DograPandit} observed the improved estimate $D \leq p^{n}$. 

\subsection{Explicit bounds on unramified torsion cosets}
\label{subsec:explicitbound}
As evident from above, the bound from \autoref{eqn:body_MMbound} will come from explicitly bounding $\# (J^1(X_0)(k) \cap B_i(k))$. 
We will show that
\begin{equation}\label{eqn:bound2_body}
\# (J^1(X_0)(k)  \cap B_i(k)) \leq p^{2n} 3^{n} n!\left( \sum_{i=0}^{d} {2d \choose d+i} (-1)^is_i(F^{*}_{X_0}\Omega_{X_0}^1)\cdot \sO_{X_0}(3\Theta_0)^{d-i}\right)
\end{equation}
where we remind the reader of our conventions for $0$-cycles established in Subsection \ref{subsec:0cycles}. 

Similar to Buium \cite{Buium:GeometrypJets}, we will bound the left hand side of the \autoref{eqn:bound2_body} via an intersection theoretic computation.
To better understand these objects in the left hand side, we leverage the torsor structures of $J^1(X_0)$ and $J^1(A_0)$ and consider them as subschemes in the corresponding projective bundles as described in Subsection \ref{subsub:specialfiber}.

Each $J^1(A_0)$ and $J^1(X_0)$ are principal homogeneous spaces for the respective Frobenius tangent spaces, so we let $\sE_{A_0}$ and $\sE_{X_0}$ denote the vector bundles corresponding to each of these extension classes, respectively.  
This recognizes divisors $D_{X_0 } := \mP(F^*_{X_0 } \Omega_{X_0}^1)$ and $D_{A_0} := \mP(F^*_{A_0} \Omega_{A_0}^1)$ in $\mP(\sE_{X_0})$ and $\mP(\sE_{A_0})$ respectively, where we have the identifications $J^1(X_0) \cong \mP(\sE_{X_0})\setminus D_{X_0}$ and $J^1(A_0) \cong \mP(\sE_{A_0})\setminus D_{A_0}$. 
Letting $\iota\colon X_0 \hookrightarrow A_0$ denote the inclusion, we note that there is a surjective morphism $\iota^*\sE_{A_0} \to \sE_{X_0}$ extending the homomorphism $\iota^*F^*_{A_0}\Omega_{A_0}^1 \to F^*_{X_0}\Omega_{X_0}^1$, which in turn induces a closed embedding $\mP(\sE_{X_0}) \hookrightarrow \mP(\sE_{A_0})$. 
This will be compatible with the functorially induced map $D_{X_0} \to D_{A_0}$ by pullback along $\iota^*$, so it suffices to describe the map $J^1(X_0) \hookrightarrow J^1 (A_0)$ which can be done via cohomology. 

Specifically, setting $\eta_{X_0} \in H^1(X_0, F^*_{X_0} T_{X_0})$ and $\eta_{A_0} \in H^1(A_0, F^*_{A_0}T_{A_0})$ the classes corresponding to the extensions $0 \to \sO_{X_0} \to \sE_{X_0} \to  F^*_{X_0} \Omega_{X_0}^1 \to 0$ and $0 \to \sO_{A_0} \to \sE_{A_0} \to  F^*_{A_0} \Omega_{A_0}^1 \to 0$. Pulling back we have $0 \to \iota^* \sO_{A_0} \to \iota^* \sE_{A_0} \to \iota^*  F^*_{A_0} \Omega_{A_0}^1 \to 0$ and the natural map of short exact sequences induces a diagram 

\begin{center}
\begin{tikzpicture}
\node (TL) at (-4,0){$H^0(X_0, \iota^* \sO_{A_0})$};
\node (TC) at (0,0) {$H^1(X_0, \iota^*  F^*_{A_0} T_{A_0})$};
\node (TR) at (4,0){$H^1(X_0, \iota^* \sE_{A_0}^{\vee}).$};

\node (BL) at (-4,1){$H^0(X_0,  \sO_{X_0})$};
\node (BC) at (0,1) {$H^1(X_0,   F^*_{X_0} T_{X_0})$};
\node (BR) at (4,1){$H^1(X_0,  \sE_{X_0}^{\vee})$};

\draw[->] (TL) edge (TC);
\draw[->] (TC) edge (TR);

\draw[->] (BL) edge (BC);
\draw[->] (BC) edge (BR);

\draw[->] (BL) edge (TL);
\draw[->] (BC) edge (TC);
\draw[->] (BR) edge (TR);
\end{tikzpicture}
\end{center} 
The map $H^1(X_0,   F^*_{X_0} T_{X_0}) \to H^1(X_0, \iota^*  F^*_{A_0} T_{A_0})$ takes $\eta_{X_0}$ to the 
image of the class $\eta_{A_0}$, and this will induce the desired closed $k$-immersion $J^1(X_0) \hookrightarrow J^1(A_0)$.

Each projectivization has a projection map $\rho_{X_0} \colon \mP(\sE_{X_0}) \to X_0$ and $\rho_{A_0} \colon \mP(\sE_{A_0}) \to A_0$. 
Consider the line bundle 
\[
\sH := \rho_{A_0}^* \sO_{A_0}(3 \Theta_0) \otimes \sO_{\mP(\sE_{A_0})}(1)
\] 
on $\mP(\sE_{A_0})$ where $\Theta_0$ is a $\Theta$-divisor on $A_0$. 
Using the argument from \cite[p.~4]{BV96} and \cite[p.~163, Theorem]{MumAb}, we see that $\sH$ is very ample.  
With this very ample line bundle, we obtain a closed embedding with respect to which we can calculate degree. 
In particular, we may use B\'ezout's theorem \cite[p.~148]{Fulton:IntersectionTheory} to say that 
\begin{equation}\label{eqn:bound3_body}
\# (J^1(X_0)(k)  \cap B_i(k)) \leq (\deg_ {\sH}B_i)\cdot (\deg_{\sH}\mP(\sE_{X_0})).
\end{equation}
Note that the argument bounding $\deg_{\sH} B_i$ on \cite[p.~357]{Buium:GeometrypJets} goes through \textit{mutatis mutandis} in our setting, so we have 
\begin{equation}\label{eqn:bound4_body}
\deg_{\sH} B_i \leq  p^{2n} 3^n  n!
\end{equation}
where $\dim_k A_0 = n$. 
Furthermore, we see that it suffices to bound $\deg_{\sH}\mP(\sE_{X_0})$.

\begin{prop}\label{prop:intersection_ProjEX}
With the notation as above, we have that 
\[
\deg_{\sH}\mP(\sE_{X_0}) = \sum_{i=0}^{d} {2d \choose d+i} (-1)^is_i(F^{*}_{X_0}\Omega_{X_0}^1)\cdot \sO_{X_0}(3\Theta_0)^{d-i}
\]
where $(-1)^is_i(F^{*}_{X_0}\Omega_{X_0}^1)$ is the $i$-th Segre class of $F^*_{X_0}\Omega_{X_0}^1$ and the product refers to the intersection product on the Chow ring of $X_0$. 
\end{prop}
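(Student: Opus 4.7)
The plan is to compute $\deg_{\sH}\mP(\sE_{X_0}) = c_{1}(\sH|_{\mP(\sE_{X_0})})^{2d}\cap[\mP(\sE_{X_0})]$ by pushing forward along the projection $\rho_{X_0}\colon \mP(\sE_{X_0}) \to X_{0}$ and then applying the Whitney summation formula for Segre classes. The closed immersion $\mP(\sE_{X_0}) \hookrightarrow \mP(\sE_{A_0})$ constructed earlier in this subsection is compatible with the projections to $X_{0} \hookrightarrow A_{0}$ and pulls $\sO_{\mP(\sE_{A_0})}(1)$ back to $\sO_{\mP(\sE_{X_0})}(1)$. Setting $h := \rho_{X_{0}}^{*} c_{1}\bigl(\iota^{*}\sO_{A_{0}}(3\Theta_{0})\bigr)$ and $\xi := c_{1}\bigl(\sO_{\mP(\sE_{X_0})}(1)\bigr)$, the restriction of $\sH$ has first Chern class $h + \xi$, so the sought degree equals $(h+\xi)^{2d}\cap[\mP(\sE_{X_0})]$.

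Next I would binomially expand $(h+\xi)^{2d} = \sum_{j=0}^{2d}\binom{2d}{j}h^{j}\xi^{2d-j}$ and apply the projection formula, using that $h$ is pulled back from $X_{0}$, to obtain
\[
\rho_{X_{0}*}(h^{j}\xi^{2d-j}) = c_{1}(\sO_{X_{0}}(3\Theta_{0}))^{j}\cdot \rho_{X_{0}*}(\xi^{2d-j}).
\]
Since the fibers of $\rho_{X_{0}}$ are $d$-dimensional projective spaces, $\rho_{X_{0}*}(\xi^{k})$ vanishes for $k < d$. For $k = d+i$ with $0 \leq i \leq d$, the definition of the $i$-th Segre class from Section~\ref{sec:intersection_theory} lets me rewrite $\rho_{X_{0}*}(\xi^{d+i})\cap[X_0]$ in terms of the operator $(-1)^{i}s_{i}(\sE_{X_0})$ evaluated on the fundamental class $[X_{0}]$, with the sign governed by the convention adopted there.

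Third, I would transfer from $\sE_{X_{0}}$ to the quotient $F^{*}_{X_{0}}\Omega^{1}_{X_{0}}$ via \autoref{lemma:SegreWhitneySum} applied to the defining extension $0 \to \sO_{X_{0}} \to \sE_{X_{0}} \to F^{*}_{X_{0}}\Omega^{1}_{X_{0}} \to 0$. Because $c_{1}(\sO_{X_{0}}) = 0$ forces $s(\sO_{X_{0}}) = 1$, the total Segre classes of $\sE_{X_{0}}$ and $F^{*}_{X_{0}}\Omega^{1}_{X_{0}}$ agree term by term, so each individual Segre class of $\sE_{X_0}$ matches that of $F^{*}_{X_{0}}\Omega^{1}_{X_{0}}$. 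Reindexing via $i = d - j$ and using the symmetry $\binom{2d}{d-i} = \binom{2d}{d+i}$ then repackages the surviving terms into the asserted formula.

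The argument is essentially a routine intersection-theoretic exercise on a projective bundle, with the Whitney step reducing everything from $\sE_{X_0}$ to the controllable bundle $F^{*}_{X_0}\Omega^{1}_{X_0}$. The main obstacle is carefully bookkeeping the sign conventions from Section~\ref{sec:intersection_theory}, where the $i$-th Segre class is written as the composite symbol $(-1)^{i}s_{i}(\cdot)$ while the raw push-forward $\rho_{X_{0}*}(\xi^{d+i})$ carries no explicit sign; once this is tracked, the projection formula, the vanishing $\rho_{X_{0}*}(\xi^{k}) = 0$ for $k < d$, and Whitney multiplicativity suffice to close the argument.
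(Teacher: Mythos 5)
Your proposal is correct and follows essentially the same route as the paper's proof: restrict $\sH$ to $\mP(\sE_{X_0})$, binomially expand $(h+\xi)^{2d}$, discard the terms where the $h$-exponent exceeds $d$ (you kill them via $\rho_{X_0*}(\xi^{k})=0$ for $k<d$, the paper via $\sO_{X_0}(3\Theta_0)^{>d}=0$ on $X_0$ --- the same set of terms either way), identify the surviving pushforwards with Segre classes of $\sE_{X_0}$, and pass to $F^{*}_{X_0}\Omega^1_{X_0}$ using \autoref{lemma:SegreWhitneySum} and $s(\sO_{X_0})=1$. The reindexing and sign bookkeeping you describe match the paper's computation.
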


\begin{proof}
First, we note that 
\[
\sH\otimes \sO_{\mathbf{P}(\sE_{X_0})} \cong \rho_{X_0}^*\sO_{X_0}(3\Theta_0)\otimes \sO_{\mathbf{P}(\sE_{X_0})}(1).
\]
As $\sH$ is very ample, it suffices to compute the self-intersection number
\[
(\sH\otimes \sO_{\mathbf{P}(\sE_{X_0})})^{2d} = (\rho_{X_0}^*\sO_{X_0}(3\Theta_0)\otimes \sO_{\mathbf{P}(\sE_{X_0})}(1))^{2d}
\]
where we note that $\dim_k \mP(\sE_{X_0}) = 2d$. 
We begin by simply expanding the above expression using basic properties of intersection products 
\[
(\rho_{X_0}^*\sO_{X_0}(3\Theta_0)\otimes \sO_{\mathbf{P}(\sE_{X_0})}(1))^{2d} = \sum_{i=0}^{2d}{2d \choose i} (\rho_{X_0}^*\sO_{X_0}(3\Theta_0)^{2d-i}\cdot \sO_{\mathbf{P}(\sE_{X_0})}(1)^i )
\]
We will analyze this summation by breaking it into two parts. 

To begin, we first claim for $0\leq i \leq d-1$, we have that 
\[
 (\rho_{X_0}^*\sO_{X_0}(3\Theta_0)^{2d-i}\cdot \sO_{\mathbf{P}(\sE_X)}(1)^i ) = 0.
\]
Note that 
$
\rho_{X_0}^*\sO_{X_0}(3\Theta_0)^{2d-i} = \rho_{X_0}^*(\sO_{X_0}(3\Theta_0)^{2d-i})
$
where the intersection on the right-hand side takes place in $X_0$. 
As $\dim X_0 = d$, we have that $\sO_{X_0}(3\Theta_0)^{2d-i} = \emptyset$ for $0\leq i \leq d-1$ which implies our claim. 
Thus, it suffices to compute 
\[
\sum_{i=d}^{2d}{2d \choose i} (\rho_{X_0}^*\sO_{X_0}(3\Theta_0)^{2d-i}\cdot \sO_{\mathbf{P}(\sE_{X_0})}(1)^i )
\]

Since the above intersections in this summation are all 0-cycles, we may and do compute their intersection numbers after pushforward along $\rho_{X_0}$. 
By the definition of the $i$-th Segre classes from Subsection \ref{subsub:Segreclasses}, we can rewrite the summation as
\[
\sum_{i=d}^{2d}{2d \choose i} (\sO_{X_0}(3\Theta_0)^{2d-i}\cdot (-1)^{i-d}s_{i-d}(\sE_{X_0}) ),
\]
After re-indexing, the summation has the following shape
\[
\sum_{i=0}^{d}{2d \choose d + i} (\sO_{X_0}(3\Theta_0)^{d-i}\cdot (-1)^is_{i}(\sE_{X_0})  ).
\]
To finish our claim, we note that $\sE_{X_0}$ sits in the short exact sequence
\[
0\to \sO_{X_0} \to \sE_{X_0} \to F^*_{X_0}\Omega_{X_0}^1 \to 0
\]
The multiplicativity of Segre classes (\autoref{lemma:SegreWhitneySum}) implies that $s(\sE_{X_0}) = s(\sO_{X_0})\cdot s(F^{*}_{X_0}\Omega_{X_0}^1)$ but since $s(\sO_{X_0}) = 1$, we have that $s(\sE_{X_0}) = s(F^{*}_{X_0}\Omega_{X_0}^1)$, and so our claim follows. 
\end{proof}

We now conclude with a proof of \autoref{thm:mainthm1}.

\begin{proof}[Proof of \autoref{thm:mainthm1}]
The bound from \autoref{eqn:bound2_body} follows from \autoref{eqn:bound3_body}, \autoref{eqn:bound4_body}, and \autoref{prop:intersection_ProjEX}.  
The final result follows from combining \autoref{eqn:bound1_body} with  \autoref{eqn:bound2_body} and recalling that $\# D \leq p^n$ from \cite[Lemma~3]{DograPandit}. 
\end{proof}

\begin{remark}\label{rem:Buium_bound}
When $X$ is a curve of genus $g\geq 2$ and $A$ is the curve's Jacobian, the bound from \autoref{prop:intersection_ProjEX} recovers Buium's self intersection computation from \cite{Buium:GeometrypJets}. 
Indeed, Buium showed that 
\[
\deg_{\sH}\mP(\sE_{X_0}) = 6g + p(2g-2).
\]
The bound from \autoref{prop:intersection_ProjEX} is 
\[
\deg_{\sH}\mP(\sE_{X_0}) = 2\sO_{X_0}(3\Theta_0)\cdot s_0(F^{*}_{X_0}\Omega_{X_0}^1) + (-1)s_1(F^{*}_{X_0}\Omega_{X_0}^1). 
\]
Standard properties of theta divisors $\Theta_0$ imply that
\begin{align*}
2\sO_{X_0}(3\Theta_0)\cdot s_0(F^{*}_{X_0}\Omega_{X_0}^1) &= 2(\sO_{X_0}(3\Theta_0) \cdot X_0 ) = 2\cdot 3 \cdot (\deg_{\Theta_0}X_0) = 2\cdot 3 \cdot g.
\end{align*}
Now, \autoref{lemma:SegreChernInverse} implies that 
\[
(-1)s_1(F^{*}_{X_0}\Omega_{X_0}^1) = c_1(F^{*}_{X_0}\Omega_{X_0}^1) = \deg(F^{*}_{X_0}\Omega_{X_0}^1) = p(2g-2),
\]
and therefore,  we have our desired equality.  
\end{remark}

\section{Proof of Manin--Mumford for varieties with ample cotangent bundle}
\label{sec:MMProof}
In this section, we prove \autoref{thm:explicit_unramified_MM} and illustrate how we can use this to deduce \autoref{thm:general_MM}. 
For the reader's convenience, we recall these theorems below. 

\begin{theorem}[= \autoref{thm:explicit_unramified_MM}]\label{thm:explicit_unramified_MM_indoc}
Let $F$ be a number field, $A$ an abelian $F$-variety of dimension $n$, and $X$ a smooth $F$-subvariety of dimension $d$ in $A$ such that the cotangent bundle $\Omega_X^1$ is ample. For every $p\gg 0$ of good reduction for both $X$ and $A$, we have that 
\[
\# (X(F^{\alg}) \cap A(F^{\alg})[\nptors]) \leq p^{3n} 3^{n} n!  \left( \sum_{i=0}^d  {2d \choose d + i} (-1)^is_i(F^{*}_{X_0}\Omega_{X_0}^1)\cdot \sO_{X_0}(3\Theta_0)^{d-i}\right)
\]
where 
$A(F^{\alg})[\nptors]$ denotes the prime-to-$p$ torsion in $A(F^{\alg})$, $X_0$ and $A_0$ denote the reduction of $X$ and $A$ modulo $p$, respectively, $(-1)^is_i(F^{*}_{X_0}\Omega_{X_0}^1)$ is the $i$-th Segre class of the Frobenius pullback of the cotangent bundle of $X_0$, and $\Theta_0$ is a $\Theta$-divisor on the abelian variety $A_0$. 
\end{theorem}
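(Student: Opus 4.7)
The plan is to deduce \autoref{thm:explicit_unramified_MM} from the local unramified statement \autoref{thm:mainthm1} by spreading out to a prime of good reduction and exploiting the fact that prime-to-$p$ torsion of an abelian variety with good reduction at $p$ is unramified at $p$.

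First, I would spread out the closed immersion $X \hookrightarrow A$ to a closed immersion $\mathfrak{X} \hookrightarrow \mathfrak{A}$ over $\Spec \mathcal{O}_{F,S}$ for a suitable finite set of primes $S$, arranged so that $\mathfrak{A}$ is an abelian scheme, $\mathfrak{X}$ is smooth and projective, and $\Omega_{\mathfrak{X}/\mathcal{O}_{F,S}}^1$ is relatively ample. For any prime $p \notin S$ that is unramified in $F/\mathbb{Q}$ (which is automatic for $p \gg 0$) and any prime $\mathfrak{p}$ of $\overline{\mathcal{O}}_F$ above $p$, the completion $F_\mathfrak{p}$ is unramified over $\mathbb{Q}_p$ and embeds into $K = R[1/p]$, so base change yields an $R$-model of $(X_K, A_K)$. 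The content of \autoref{lemma:findingprime} ensures that for all $p \gg 0$ this $R$-model satisfies the hypotheses of \autoref{thm:mainthm1}. Because $A$ has good reduction at $\mathfrak{p}$, N\'eron--Ogg--Shafarevich implies $A(F^{\alg})[\nptors] \subseteq A(F_\mathfrak{p}^{\mathrm{unr}}) = A(\mathbb{Q}_p^{\mathrm{unr}}) \subseteq A(K)$, and since $X \hookrightarrow A$ is a closed immersion I obtain an injection
\[
X(F^{\alg}) \cap A(F^{\alg})[\nptors] \hookrightarrow X(K) \cap A(K)[\tors].
\]
Applying \autoref{thm:mainthm1} to the $R$-model bounds the right-hand side by the explicit expression in the statement, and the identification of the special fiber of the $R$-model with the reduction of $X$ and $A$ modulo $\mathfrak{p}$ matches all the intersection-theoretic data.

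The principal technical obstacle is \autoref{lemma:findingprime}, specifically the construction of a smooth projective $R$-curve $C \hookrightarrow \mathfrak{X}$ of genus $g \geq 2$ satisfying $p > 2\dim(X)^2 g$. The ampleness of $\Omega_{X_0}^1$ and the good reduction of $\mathfrak{A}$ and $\mathfrak{X}$ follow from the openness of ampleness in families and standard spreading-out arguments. To produce the curve, one would intersect $X_F$ with a general complete intersection in an ambient projective embedding to obtain a smooth curve; \autoref{lemma:amplecotangent_subvarieties}.(1) forces this curve to have genus at least $2$ over $F$, and Bertini together with a specialization argument yields smoothness and fixed geometric genus $g$ in the reductions at all but finitely many primes. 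With $g$ determined by the generic fibre, the inequality $p > 2\dim(X)^2 g$ then holds automatically for $p$ sufficiently large, completing the verification of the hypotheses.
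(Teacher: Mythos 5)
Your proposal is correct and follows essentially the same route as the paper: reduce to the local unramified setting via the fact that prime-to-$p$ torsion of an abelian variety with good reduction is unramified at $p$ (the paper cites this as $A(F^{\alg})[\nptors] = A(K)[\nptors]$ from Roessler), then invoke \autoref{lemma:findingprime} to produce an $R$-model satisfying the hypotheses of \autoref{thm:mainthm1}, whose proof in the paper likewise uses Bertini to find the curve, spreading out, and the fiberwise criterion for relative ampleness of the cotangent bundle. The only cosmetic difference is that you phrase the genus condition $p > 2\dim(X)^2 g$ as automatic once $g$ is fixed by the generic fibre, which is exactly how the paper handles it.
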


\begin{theorem}[= \autoref{thm:general_MM}]\label{thm:MM_indoc}
Let $F$ be a number field, let $A$ be an abelian $F$-variety, and let $X$ be a smooth $F$-subvariety of $A$. If $\Omega_X^1$ is ample,  then
\[
\# (X(F^{\alg}) \cap A(F^{\alg})[\tors]) < \infty. 
\]
\end{theorem}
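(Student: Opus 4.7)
The plan is to combine \autoref{thm:explicit_unramified_MM}, applied at a single well-chosen prime $p$, with Bogomolov's openness theorem \cite{Bogomolov:LAdicReps} and the fact that a smooth variety with ample cotangent bundle contains no positive-dimensional translate of an abelian subvariety. Concretely, I would first invoke \autoref{lemma:findingprime} to produce a large prime $p$ of good reduction for both $X$ and $A$ at which the hypotheses of \autoref{thm:explicit_unramified_MM} are satisfied. Applying Theorem A at this $p$ then yields
\[
\#(X(F^{\alg}) \cap A(F^{\alg})[\nptors]) < \infty.
\]
Since $A(F^{\alg})[\tors] = A(F^{\alg})[\nptors] \oplus A(F^{\alg})[p^{\infty}]$, it remains to show that the $p$-primary piece $S_p := X(F^{\alg}) \cap A(F^{\alg})[p^{\infty}]$ is also finite.

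This is where Bogomolov's theorem enters. His openness result asserts that the image of the $p$-adic Galois representation $\rho_p \colon \Gal(\bar F/F) \to \Aut(T_p A)$ is open in its algebraic envelope, and in particular contains an open subgroup $U_p \subseteq \ZZ_p^{\times}$ acting as scalar homotheties on $A[p^{\infty}]$. Because $X$ is defined over $F$, the set $S_p$ is Galois-stable, and is therefore preserved by every $c \in U_p$. Suppose, toward contradiction, that $S_p$ were infinite, and fix some $c \in U_p$ of infinite multiplicative order in $\ZZ_p^{\times}$ (e.g.\ $c = 1 + p^N$ for $N$ large enough that $c \in U_p$). Taking $p$-adic closures inside $A(\mathbb{C}_p)$, the $c^{\ZZ}$-orbit of a torsion point of sufficiently large $p$-power order accumulates on a positive-dimensional $p$-adic analytic subgroup of $A$; a standard formal/analytic rigidity argument then promotes this to the algebraic statement that the Zariski closure of $S_p$ inside $X$ contains a positive-dimensional torsion coset, i.e.\ a translate $B + \tau$ with $B \subseteq A$ a positive-dimensional abelian subvariety and $\tau \in A(F^{\alg})[\tors]$.

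Finally, translation by $\tau$ is an automorphism of $A$ preserving the cotangent bundle up to isomorphism, so the translate $X - \tau$ again has ample cotangent bundle and contains $B$. But \autoref{lemma:amplecotangent_subvarieties}(1) applied to $X - \tau$ in characteristic zero forces every integral subvariety, and in particular $B$, to be of general type, contradicting the fact that an abelian variety has Kodaira dimension zero. Hence $S_p$ is finite, completing the proof. The main obstacle in this plan is the Zariski-closure step of the second paragraph: converting Bogomolov's openness statement for $\rho_p$ into the concrete geometric conclusion that the Zariski closure of a scalar-stable infinite subset of $A[p^{\infty}]$ contains a positive-dimensional torsion coset. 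One expects to quote this form of Bogomolov's theorem directly rather than reproducing the underlying $p$-adic Lie algebra analysis.
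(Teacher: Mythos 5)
Your overall route is the same as the paper's: finiteness of the prime-to-$p$ part comes from \autoref{thm:explicit_unramified_MM} applied at a single well-chosen prime produced by \autoref{lemma:findingprime}, and finiteness of the $p$-primary part comes from Bogomolov's theorem combined with the fact that every integral subvariety of $X$ is of general type in characteristic zero (\autoref{lemma:amplecotangent_subvarieties}(1)), so that $X$ contains no positive-dimensional torsion coset. The paper packages this last step as \autoref{thm:Bogomolov}, quoting Bogomolov's Th\'eor\`eme 3 directly rather than re-deriving the geometric consequence from the openness of the $p$-adic Galois image --- exactly the shortcut you anticipate at the end of your second paragraph, so the hand-waving there is harmless.

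The one genuine gap is the sentence beginning ``Since $A(F^{\alg})[\tors] = A(F^{\alg})[\nptors] \oplus A(F^{\alg})[p^{\infty}]$, it remains to show that the $p$-primary piece \ldots is also finite.'' The direct sum decomposition of the torsion \emph{group} does not reduce finiteness of $X(F^{\alg}) \cap A(F^{\alg})[\tors]$ to finiteness of the two sets $X \cap A(F^{\alg})[\nptors]$ and $X \cap A(F^{\alg})[p^{\infty}]$: a torsion point $x = x' + x''$ with $x' \in A[\nptors]$ and $x'' \in A[p^{\infty}]$ both nonzero may lie on $X$ while neither $x'$ nor $x''$ does, so $X \cap A[\tors]$ is in general strictly larger than the union of the two pieces you control. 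The reduction is nonetheless true, but it requires an argument --- typically a Galois-theoretic one, using homotheties in the image of the $p$-adic representation to play the two components of a torsion point on $X$ against each other. The paper does not reprove it either; it explicitly flags it as a nontrivial step and cites \cite{Raynaud:Torsion,Roessler:NoteMM,PinkRoessler:HrushovskiMM,Ito:MMSupersingular} for it. You should do likewise (or supply the homothety argument); as written, this step does not follow from the displayed decomposition alone.
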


\subsection{Manin--Mumford for prime-to-$p$ torsion cosets}
First, we discuss the proof of \autoref{thm:explicit_unramified_MM_indoc}. 
We remind the reader of our conventions from Subsection \ref{subsec:Conventions}. 

The first step in our proof is to reduce to the local setting. 
This follows from the well-known fact (see e.g., \cite[Theorem 4]{Roessler:NoteMM}) that for $p$ a prime of good reduction for $A$, 
\begin{equation}\label{eqn:equal_unramified_torsion}
A(F^{\alg})[\nptors] = A(K)[\nptors]
\end{equation}
where $K$ is the maximal extension of $F$ contained in $F^{\alg}$ that is unramified above $p$.   
In \autoref{thm:mainthm1}, we proved a finiteness result for $\# (X(K) \cap A(K)[\nptors])$ for $W(\mF_p^{\alg})$-varieties whose special fiber has ample cotangent bundle and that contains a $W(\mF_p^{\alg})$-curve of good reduction with low genus relative to $p$. The next lemma will show how we may use \autoref{thm:mainthm1} to deduce \autoref{thm:explicit_unramified_MM}. 

\begin{lemma}\label{lemma:findingprime}
Let $F$ be a number field, and let $X$ be a smooth $F$-subvariety of an abelian $F$-variety $A$ such that $\Omega_X^1$ is ample.  
There exists a prime number $p$ such that there exists
\begin{enumerate}
\item a prime $\mathfrak{p}$ of $F$ over $p$ such that $F/\mQ$ is unramified at $\mathfrak{p}$, 
\item smooth projective $W(\mF_p^{\alg})$-varieties $\mathfrak{X}_p$ and $\mathfrak{A}_p$ and a smooth projective $W(\mF_p^{\alg})$-curve $\mathfrak{C}_p$ such that 
\begin{enumerate}
\item there exist closed $W(\mF_p^{\alg})$-immersions $\mathfrak{C}_p \hookrightarrow \mathfrak{X}_p \hookrightarrow \mathfrak{A}_p$,  
\item the generic fiber of $\mathfrak{X}_p$ (resp.~$\mathfrak{A}_p$) of isomorphic to the base change of $X$ (resp.~$A$) to $\Spec(W(\mF_p^{\alg})[1/p])$, 
\item the genus $g$ of the special fiber $\mathfrak{C}_{p,0}$ satisfies the inequality $p > 2\dim(X)^2 g$, and 
\item the special fiber $\mathfrak{X}_{p,0}$ of $\mathfrak{X}_p$ has ample cotangent bundle. 
\end{enumerate}
\end{enumerate}
\end{lemma}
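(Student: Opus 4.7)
The strategy is to first fix a smooth projective curve $C \subseteq X$ of some definite genus $g \geq 2$, spread the chain $C \subseteq X \subseteq A$ out to a smooth flat family over a dense open subset of $\Spec \mathcal{O}_F$, and only then choose a residue characteristic $p$ that is large relative to the (now fixed) invariant $g$. Thus the genus $g$ is chosen once and for all before $p$ enters the picture, turning the inequality $p > 2\dim(X)^2 g$ into a mere cofinality condition on primes.

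To build $C$, fix any very ample line bundle $\sL$ on $X$ and apply Bertini's theorem (valid in characteristic zero) iteratively to a sufficiently high power $\sL^{\otimes m}$: taking $\dim(X) - 1$ suitably general sections cuts out a smooth projective curve $\iota\colon C \hookrightarrow X$ whose genus $g$ depends only on $X$, $\sL$, and $m$. Since $\Omega_X^1$ is ample, \autoref{lemma:amplecotangent_subvarieties}.(1) shows $C$ is of general type, so $g \geq 2$. Next, by standard spreading out (e.g.~\cite[\href{https://stacks.math.columbia.edu/tag/01ZM}{Tag 01ZM}]{stacks-project}), there is a finite set $S$ of primes of $\mathcal{O}_F$ and smooth projective models $\mathfrak{C} \hookrightarrow \mathfrak{X} \hookrightarrow \mathfrak{A}$ over $\mathcal{O}_{F,S} := \mathcal{O}_F[1/\prod_{\mathfrak{p} \in S} \mathfrak{p}]$ whose generic fibers recover $C \hookrightarrow X \hookrightarrow A$. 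Enlarging $S$ finitely if necessary, we may further assume that $\Omega^1_{\mathfrak{X}/\mathcal{O}_{F,S}}$ is ample on every geometric fiber, using that ampleness of a coherent sheaf is an open condition in a proper flat family.

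Now choose a rational prime $p$ satisfying three conditions: $p > 2\dim(X)^2 g$, $p$ is unramified in $F/\mathbb{Q}$, and some prime $\mathfrak{p} \mid p$ of $\mathcal{O}_F$ lies outside $S$. Each condition excludes only finitely many rational primes, so such $p$ (and $\mathfrak{p}$) exist. Because $\mathfrak{p}$ is unramified over $p$, the completion $\mathcal{O}_{F_\mathfrak{p}}$ equals $W(\kappa)$ for the residue field $\kappa$ at $\mathfrak{p}$, and the inclusion $\kappa \hookrightarrow \mathbb{F}_p^{\alg}$ functorially induces a flat local ring map $\mathcal{O}_{F_\mathfrak{p}} \hookrightarrow W(\mathbb{F}_p^{\alg})$. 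Define $\mathfrak{C}_p \hookrightarrow \mathfrak{X}_p \hookrightarrow \mathfrak{A}_p$ as the base change of $\mathfrak{C} \hookrightarrow \mathfrak{X} \hookrightarrow \mathfrak{A}$ along $\mathcal{O}_{F,S} \to \mathcal{O}_{F_\mathfrak{p}} \to W(\mathbb{F}_p^{\alg})$. All properties required by the lemma --- smoothness, projectivity, preservation of closed immersions, identification of the generic fibers with the base changes of $C, X, A$ to $W(\mathbb{F}_p^{\alg})[1/p]$, ampleness of the cotangent bundle on $\mathfrak{X}_{p,0}$, and the constancy of the genus $g$ of the smooth curve $\mathfrak{C}_p$ --- persist under this flat base change, and the chosen inequality $p > 2\dim(X)^2 g$ holds by construction.

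The only step that is not essentially bookkeeping is the openness of ampleness for vector bundles in proper flat families, which is where one pays to upgrade the generic-fiber hypothesis to a fiberwise statement; everything else reduces to standard spreading out and an elementary counting argument for the choice of $p$.
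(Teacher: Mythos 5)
Your proposal is correct and follows essentially the same route as the paper's proof: produce a smooth curve in $X$ via iterated Bertini, spread out the chain $C\subseteq X\subseteq A$ over $\sO_{F,S}$, invoke the openness of fiberwise ampleness of the cotangent bundle (the paper cites \cite[Proposition 4.4]{Hartshorne:AmpleVectorBundle}), and then pick $p$ avoiding finitely many bad conditions. Your write-up is if anything slightly more explicit than the paper's on two points it leaves implicit — that the genus $g$ is fixed \emph{before} $p$ is chosen so that $p > 2\dim(X)^2 g$ is merely a largeness condition, and that $g\geq 2$ follows from \autoref{lemma:amplecotangent_subvarieties}.(1) — so no changes are needed.
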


\begin{proof}
First, we may iteratively use Bertini's theorem \cite[Cor.I.6.11(2)]{Jouanolou:Bertini} to prove that there exists infinitely many smooth curves $C$ inside of $X$. 
Since smoothness is an open condition (i.e., there are only finitely many places of bad reduction for smooth projective $F$-varieties),  there exists some finite set of places $S_1$ of $F$ such that $C$ and $X$ can be spread out to smooth projective varieties $\mathfrak{C}$ and $\mathfrak{X}$ over $\Spec(\sO_{F,S_1})$ and $A$ can be spread out to an abelian scheme $\mathfrak{A}$ over $\Spec(\sO_{F,S_1})$. 
Let $\pi\colon \mathfrak{X} \to \Spec(\sO_{F,S_1})$ denote the structure morphism, which is smooth and projective. 
As $\pi$ is smooth, the relative cotangent bundle $\Omega_{\mathfrak{X}/\Spec(\sO_{F,S_1})}^1$ is a vector bundle. 
We have that the fiber of $\pi$ over the generic point $\eta$ of $ \Spec(\sO_{F,S_1})$ is isomorphic to $X$, which was assumed to have ample cotangent bundle. 
The fiberwise criterion for relative ampleness of vector bundles \cite[Proposition 4.4]{Hartshorne:AmpleVectorBundle} tells us that there exists some open neighborhood $U$ of $\eta$ such that for each point $x\in U$, the restriction of $\Omega_{\mathfrak{X}/\Spec(\sO_{F,S_1})}^1$ to the fiber $\pi^{-1}(x)$, which we realize as $\Omega_{\mathfrak{X}_x/\Spec(\kappa(x))}^1$, is ample.  
Using the above and taking $p \gg 0$, we can base change the smooth projective $\sO_{F,S_1}$-varieties to $W(\mF_p^{\alg})$ and guarantee that conditions $(2).(a,b,c,d)$ are satisfied. 
We may also guarantee conditions $(1)$ and $(2).(a,b,c,d)$ hold simultaneously by noting that there exist infinitely many unramified primes in $F$, which concludes the proof. 
\end{proof}

We may now prove \autoref{thm:explicit_unramified_MM}. 

\begin{proof}[Proof of \autoref{thm:explicit_unramified_MM}]
This follows from the \autoref{eqn:equal_unramified_torsion}, \autoref{lemma:findingprime}, and \autoref{thm:mainthm1}.
\end{proof}

\subsection{Proof of \autoref{thm:general_MM}}
For the proof of our general Manin--Mumford statement (\autoref{thm:MM_indoc}), we begin by noting that standard techniques (see e.g.,~\cite{Raynaud:Torsion,Roessler:NoteMM,  PinkRoessler:HrushovskiMM,Ito:MMSupersingular}) show that proving a general Manin--Mumford statement reduces to proving a $p$-power and prime-to-$p$ Manin--Mumford statement i.e.,  to prove that $\# (X(F^{\alg}) \cap A(F^{\alg})[\tors]) < \infty$, it suffice to prove following
\begin{align*}
\# (X(F^{\alg}) \cap A(F^{\alg})[p^{\infty}]) &< \infty \\
\# (X(F^{\alg}) \cap A(F^{\alg})[\nptors]) &< \infty
\end{align*}
for some prime $p$. 
By \autoref{thm:explicit_unramified_MM}, there exists a prime $p\gg 0$ such that $$\# (X(F^{\alg}) \cap A(F^{\alg})[\nptors]) < \infty,$$ so it suffices to bound the intersection of our variety with the $p$-power torsion of the abelian variety.  
To do so, we use a result of Bogomolov. 

\begin{theorem}[Bogomolov +\,$\varepsilon$]\label{thm:Bogomolov}
Let $F$ be a number field, let $A$ be an abelian $F$-variety, and let $X$ be a smooth $F$-subvariety of $A$ such that $\Omega_X^1$ is ample.  
For any prime $p\geq 2$, we have that 
\[
\# (X(F^{\alg}) \cap A(F^{\alg})[p^{\infty}]) < \infty. 
\]
\end{theorem}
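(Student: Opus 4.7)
The plan is to argue by contradiction, leveraging Bogomolov's theorem on the image of $p$-adic Galois to reduce to a statement about translates of abelian subvarieties, and then to exploit the hyperbolicity forced by ampleness of $\Omega_X^1$ via \autoref{lemma:amplecotangent_subvarieties}.

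Suppose for contradiction that $X(F^{\alg}) \cap A(F^{\alg})[p^\infty]$ is infinite, and let $Z \subset X$ denote its Zariski closure, so $Z$ is a positive-dimensional closed $F^{\alg}$-subvariety of $A$ whose $p$-power torsion is Zariski-dense. The next step is to invoke Bogomolov's theorem from \cite{Bogomolov:LAdicReps}: the image of the representation $\Gal(F^{\alg}/F) \to \GL(T_p A)$ contains an open subgroup of homotheties $\ZZ_p^{\times}\cdot \id$. The now-standard consequence, namely the $p$-primary case of Manin--Mumford (see e.g.~\cite{PinkRoessler:HrushovskiMM} for a clean exposition), is that any closed subvariety of $A$ in which the $p$-power torsion is Zariski-dense must be a finite union of torsion translates of abelian subvarieties. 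Applied to an irreducible component $Z_0$ of $Z$ of positive dimension, this will produce a torsion point $t \in A(F^{\alg})$ and a positive-dimensional abelian subvariety $B \subset A$ such that $Z_0 = t + B$.

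The closing step is then to observe that $Z_0$ is a closed $F^{\alg}$-subvariety of $X$ isomorphic to the positive-dimensional abelian variety $B$, hence of Kodaira dimension $0$ and in particular not of general type. But $F$ has characteristic $0$, so \autoref{lemma:amplecotangent_subvarieties}(1) forces every integral subvariety of $X$ to be of general type, yielding the desired contradiction. The main obstacle is the black-boxing of the $p$-primary Manin--Mumford statement from Bogomolov's Galois-image theorem: one must argue that Galois-stability of irreducible components of $Z$, combined with the presence of open homotheties in the Galois image, forces a subgroup structure on the component. This deduction is folklore but non-trivial; once it is in hand, the remainder of the argument collapses to a single application of the hyperbolicity input from \autoref{lemma:amplecotangent_subvarieties}.
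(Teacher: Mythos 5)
Your argument is correct and is essentially the paper's proof spelled out: the paper deduces the statement in one line from Bogomolov's Th\'eor\`eme 3 together with \autoref{lemma:amplecotangent_subvarieties}(1). The only difference is bookkeeping — the structural consequence you treat as a folklore deduction from the homothety theorem (subvarieties with Zariski-dense $p$-power torsion are torsion translates of abelian subvarieties) is precisely the statement the paper cites from Bogomolov, so no extra argument is needed there.
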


\begin{proof}
This follows immediately from \cite[Th\'eor\`eme 3]{Bogomolov:LAdicReps} and \autoref{lemma:amplecotangent_subvarieties}.(1).  
\end{proof}

By combining the above discussion, \autoref{thm:Bogomolov}, and \autoref{thm:explicit_unramified_MM}, we have a proof of \autoref{thm:general_MM}. 

\section{Explicit Manin--Mumford for complete intersections of general hypersurfaces}
\label{sec:computation}
In this section, we illustrate how to apply our main theorems to the setting of complete intersections in an abelian variety. 
Additionally, we show that the bound from \autoref{thm:explicit_unramified_MM} only depends on certain parameters related to the complete intersection. More precisely, the goal of this section is to prove the following.

\begin{theorem}[= \autoref{thm:intro_complete_intersection_bound}]\label{thm:complete_intersection_bound}
Let $A$ be an abelian $\mQ$-variety of dimension $n$, and $X$ a smooth $\mQ$-subvariety of $A$ that is $\mQ^{\alg}$-isomorphic to the intersection of $c >n/2$ sufficiently ample general hypersurfaces $H_1,\dots, H_c $ of large and divisible enough degrees $d_1,\dots,d_c$ in $A_{\mQ^{\alg}}$. 
For every sufficiently large prime $p$, there exists some constant $\alpha(p,n,\underline{I})$ that is polynomial in $p,n$, and intersection numbers $\underline{I}$ of certain products of the hypersurfaces such that
\[
\# (X(\mQ^{\alg}) \cap A(\mQ^{\alg})[\nptors]) \leq \alpha(p,n,\underline{I}).
\]
\end{theorem}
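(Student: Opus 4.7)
The plan is to combine our quantitative unramified Manin--Mumford result with Debarre's construction of varieties with ample cotangent bundle and Scarponi's explicit intersection-theoretic computation for complete intersections.

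First, I would invoke the theorem of Debarre \cite{Debarre:VarietiesAmpleCotangent, Debarre:CorrigendumVarietiesAmple}: under the hypotheses on $c$, $n$, and the degrees $d_1,\ldots,d_c$, a general complete intersection $H_1 \cap \cdots \cap H_c$ in an abelian variety of dimension $n$ has ample cotangent bundle. Applied to $X$ (after base change to $\mQ^{\alg}$), this verifies the hypothesis of \autoref{thm:explicit_unramified_MM}.

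Next, I would apply \autoref{lemma:findingprime} to spread $X$ and $A$ out to $R$-models whose special fibers satisfy the hypotheses of \autoref{thm:mainthm1}, and then invoke \autoref{thm:explicit_unramified_MM} to deduce that for all primes $p \gg 0$ of good reduction,
\[
\# (X(\mQ^{\alg}) \cap A(\mQ^{\alg})[\nptors]) \leq p^{3n} 3^n n! \sum_{i=0}^d \binom{2d}{d+i} \deg\bigl((-1)^i s_i(F^*_{X_0}\Omega^1_{X_0}) \cdot \sO_{X_0}(3\Theta_0)^{d-i}\bigr),
\]
where $d = n-c$ and $X_0$, $A_0$, $\Theta_0$ are as in the theorem statement.

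Finally, I would invoke the explicit intersection-theoretic computation of \cite[Theorem~6.1]{Scarponi:Sparsity}, which in the complete-intersection setting rewrites the right-hand side above as a polynomial in $p$, $n$, and intersection numbers of products of the hypersurfaces $H_1, \ldots, H_c$ (with appropriate powers of $\Theta_0$). Concretely, Scarponi uses the conormal exact sequence $0 \to N^\vee_{X_0/A_0} \to \Omega^1_{A_0}|_{X_0} \to \Omega^1_{X_0} \to 0$, together with the triviality of $\Omega^1_{A_0}$ on an abelian variety and the multiplicativity of Segre classes (\autoref{lemma:SegreWhitneySum}), to convert Segre classes of $F^*_{X_0}\Omega^1_{X_0}$ into polynomial expressions in the $H_j$ and $p$ (the powers of $p$ coming from the Frobenius pullback acting on first Chern classes). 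Intersecting with powers of $3\Theta_0$ on $A_0$ produces the desired polynomial $\alpha(p,n,\underline{I})$.

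The main obstacle is that Scarponi's explicit bound in \cite[Theorem~6.1]{Scarponi:Sparsity} is \emph{conditional} on finiteness of the first critical scheme associated to $X$. Our \autoref{thm:mainthm1} (applied via \autoref{lemma:findingprime}) supplies exactly this finiteness in the ample cotangent setting, and thereby unlocks Scarponi's polynomial bound; the two results then combine to give the theorem.
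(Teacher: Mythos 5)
Your proposal is correct and follows essentially the same route as the paper: Debarre's theorem gives ampleness of $\Omega^1_X$, \autoref{lemma:findingprime} and \autoref{thm:mainthm1} (via \autoref{thm:explicit_unramified_MM}) give the Segre-class bound for $p \gg 0$, and Scarponi's complete-intersection computation (the paper cites his Equation (8) and Remark 6.2) converts that bound into the polynomial $\alpha(p,n,\underline{I})$, with your finiteness result supplying the hypothesis that makes Scarponi's analysis applicable. The only cosmetic difference is that you sketch the mechanism of Scarponi's computation where the paper simply cites it.
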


First, we recall of Debarre \cite{Debarre:VarietiesAmpleCotangent, Debarre:CorrigendumVarietiesAmple} concerning ampleness of the cotangent bundle of general complete intersections. 

\begin{theorem}[\protect{\cite[Theorem 8]{Debarre:VarietiesAmpleCotangent, Debarre:CorrigendumVarietiesAmple} }]\label{thm:Debarre}
Fix $K$ an algebraically closed field of characteristic zero, and let $A$ be an abelian $K$-variety of dimension $n$. 
For $c > n/2$, let $L_1,\dots,L_c$ be very ample line bundles on $A$, let $e_2,\dots,e_c$ be large and divisible enough positive integers, and let $H_1 \in |L_1^{e_1}|,\dots,H_c\in |L_c^{e_c}|$ be general divisors. 
The cotangent bundle $\Omega_V^1$ of $V = H_1\cap \cdots \cap H_c$ is ample. 
\end{theorem}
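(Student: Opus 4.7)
The plan is to set up the hypotheses of \autoref{thm:explicit_unramified_MM_indoc} for our $X$ and then translate the Segre-class bound appearing there into a polynomial expression in $p$, $n$, and intersection numbers of the $H_i$. First, by \autoref{thm:Debarre} applied with $c>n/2$ and appropriately large, divisible degrees $d_1,\dots,d_c$, the cotangent bundle $\Omega_{X_{\mQ^{\alg}}}^1$ of $V=H_1\cap\cdots\cap H_c$ is ample, and since ampleness descends by faithfully flat base change, $\Omega_X^1$ itself is ample. In particular all of the hypotheses of \autoref{thm:explicit_unramified_MM_indoc} are in place, so for every sufficiently large prime $p$ of good reduction for $A$ and $X$, we obtain
\[
\#(X(\mQ^{\alg})\cap A(\mQ^{\alg})[\nptors]) \leq p^{3n} 3^{n} n! \sum_{i=0}^{d} \binom{2d}{d+i} \deg\bigl((-1)^i s_i(F^{*}_{X_0}\Omega_{X_0}^1)\cdot \sO_{X_0}(3\Theta_0)^{d-i}\bigr).
\]

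Next, I would unpack the Segre class intersection numbers appearing on the right-hand side. Because $A_0$ is an abelian variety, $\Omega_{A_0}^1$ is trivial, and the conormal sequence for the complete intersection $X_0=H_{1,0}\cap\cdots\cap H_{c,0}$ reads
\[
0\to \bigoplus_{j=1}^{c}\sO_{X_0}(-H_{j,0}) \to \sO_{X_0}^{\oplus n} \to \Omega_{X_0}^1 \to 0.
\]
Pulling back by the Frobenius $F_{X_0}$ replaces each $\sO_{X_0}(-H_{j,0})$ by $\sO_{X_0}(-pH_{j,0})$, and then applying \autoref{lemma:SegreWhitneySum} (the Whitney formula for total Segre classes) and \autoref{lemma:SegreChernInverse} (the inverse relation with total Chern classes) gives an expression for $s_i(F^{*}_{X_0}\Omega_{X_0}^1)$ as an explicit polynomial, with integer coefficients depending on $n, c, i$, in the first Chern classes $pH_{j,0}$. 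Intersecting with the theta-divisor class $\sO_{X_0}(3\Theta_0)^{d-i}$ and taking degrees on $X_0 = H_{1,0}\cdots H_{c,0}$ thus expresses every term in the sum as a $\ZZ[p]$-linear combination of mixed intersection numbers of the form $\deg_{A_0}(H_{1,0}\cdots H_{c,0}\cdot H_{j_1,0}^{a_1}\cdots H_{j_r,0}^{a_r}\cdot \Theta_0^b)$, i.e.\ of the numbers forming the tuple $\underline{I}$ in the statement.

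Combining these calculations, each summand is polynomial in $p$, $n$ (through the binomials and the expansion combinatorics), and the intersection numbers $\underline{I}$; the prefactor $p^{3n}3^n n!$ is already polynomial in $p$ and $n$. Collecting gives a constant $\alpha(p,n,\underline{I})$ of the required form. At this stage I would also invoke the detailed study in \cite[Theorem~6.1 and its proof]{Scarponi:Sparsity}, which carries out essentially this same Segre-class expansion and provides an explicit packaging of the coefficients; since our \autoref{thm:mainthm1} supplies the finiteness that Scarponi's bound is conditional on, we are free to quote his formulae for the explicit polynomial $\alpha$.

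The main obstacle I anticipate is purely bookkeeping: writing down the conormal-sequence expansion in a form that makes the polynomial dependence on $(p,n,\underline{I})$ transparent, and matching indices and signs with the Scarponi presentation. The geometric input (Debarre's ampleness, triviality of $\Omega_{A_0}^1$, Whitney multiplicativity of Segre classes, scaling of Chern classes under Frobenius) is clean; the only real subtlety is verifying that after base change from $\mQ^{\alg}$ to $\mQ$ one can still arrange for a prime of good reduction at which the $H_i$ and $X$ all spread out smoothly, which is handled by \autoref{lemma:findingprime} applied to a model of $X$ together with a spreading out of the defining hypersurfaces.
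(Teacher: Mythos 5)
There is a fundamental mismatch here: the statement you were asked to prove is \autoref{thm:Debarre} itself — Debarre's purely geometric, characteristic-zero assertion that for a general complete intersection $V = H_1\cap\cdots\cap H_c$ of $c>n/2$ sufficiently ample hypersurfaces in an abelian variety $A$ over an algebraically closed field $K$ of characteristic zero, the cotangent bundle $\Omega_V^1$ is ample. Your proposal does not prove this. Instead, it proves the downstream arithmetic application (\autoref{thm:complete_intersection_bound}), namely the polynomial bound on $\#(X(\mQ^{\alg})\cap A(\mQ^{\alg})[\nptors])$, and in its very first step it \emph{invokes} \autoref{thm:Debarre} to conclude that $\Omega_{X}^1$ is ample. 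As a proof of the given statement this is circular: the conclusion you are supposed to establish is assumed as input, and everything that follows (good reduction, Frobenius pullbacks, Segre-class expansions, Scarponi's computation) is irrelevant to a statement that has no arithmetic content and whose conclusion is about ampleness, not about counting torsion points.

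To actually prove \autoref{thm:Debarre} you would need to engage with the geometry of the cotangent bundle of $V$: Debarre's argument works with the conormal sequence $0\to\bigoplus_j \sO_V(-H_j)\to \sO_V^{\oplus n}\to \Omega_V^1\to 0$ (which you do write down, but only in characteristic $p$ and only to compute Segre classes), the induced morphism from $\mathbf{P}(\Omega_V^1)$ to $\mathbf{P}^{n-1}$ coming from the trivialization of $\Omega_A^1$, and a careful analysis of which curves can be contracted, using the generality and the largeness/divisibility of the degrees $e_i$ together with the hypothesis $c>n/2$. None of this appears in your proposal. For what it is worth, the paper does not prove this theorem either — it is quoted verbatim from Debarre's work — and your argument, read as a proof of \autoref{thm:complete_intersection_bound} instead, does follow essentially the paper's route (Debarre for ampleness, \autoref{thm:mainthm1} for the bound, Scarponi for the explicit Segre-class evaluation). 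But as a proof of the stated theorem it has a fatal gap: it proves the wrong statement and presupposes the right one.
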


In \cite[Theorem 7]{Debarre:VarietiesAmpleCotangent}, Debarre shows that if $A$ is assumed to be simple, then one may take $e_2,\dots,e_c > n$. 
We refer the reader to \textit{loc.~cit.}~for details on what the terms large and divisible enough refer to in \autoref{thm:Debarre}. 
We are now ready to prove \autoref{thm:complete_intersection_bound}.

\begin{proof}[Proof of \autoref{thm:complete_intersection_bound}]
By our assumptions on $X$, \autoref{thm:Debarre} implies that $X_{\mQ^{\alg}}$ has ample cotangent bundle, and via \cite[\href{https://stacks.math.columbia.edu/tag/0D2P}{Tag 0D2P}]{stacks-project}, we have that $X$ has ample cotangent bundle as well. 
Let $p$ be a sufficiently large prime such that $A$ has good reduction at $p$, satisfying \autoref{lemma:findingprime}, and so that $X$ remains a complete intersection modulo $p$.
The assumptions on $p$ tells us that the conditions of \autoref{thm:mainthm1} are satisfied, and hence 
\begin{equation}\label{eqn:proof_complete_int_bound}
\#(X(\mQ^{\alg}) \cap A(\mQ^{\alg})[\nptors]) \leq p^{3n} 3^{n} n!  \left( \sum_{i=0}^{d}  {2d \choose d + i} (-1)^is_i(F^{*}_{X_{\mF_{p}^{\alg}}}\Omega_{X_{\mF_{p}^{\alg}}}^1)\cdot \sO_{X_{\mF_{p}^{\alg}}}(3\Theta_0)^{d-i}\right).
\end{equation}
where $d = \dim(X)$.

It remains to deduce that the right-hand side of the above equation can expressed as a polynomial in the parameters $p,n$, and intersection numbers $\underline{I}$ of certain products of the hypersurfaces.  
We note that the part of \autoref{eqn:proof_complete_int_bound} in the parentheses is exactly the same as \cite[Equation (8)]{Scarponi:Sparsity}, after tracing through different notations and reindexing terms.  
Moreover in \textit{loc.cit.~}Remark 6.2 the author computes an expression for this part of \autoref{eqn:proof_complete_int_bound} which is polynomial in the above parameters. 
Our claim concerning the structure of $\alpha$ follows. 
\end{proof}

  \bibliography{refs}{}

\def\cprime{$'$}
\providecommand{\bysame}{\leavevmode\hbox to3em{\hrulefill}\thinspace}
\providecommand{\MR}{\relax\ifhmode\unskip\space\fi MR }
\providecommand{\MRhref}[2]{%
  \href{http://www.ams.org/mathscinet-getitem?mr=#1}{#2}
}
\providecommand{\href}[2]{#2}
\begin{thebibliography}{{Sta}25}

\bibitem[AWZ21]{Achingeretal:GlobalFrobLiftability1}
Piotr Achinger, Jakub Witaszek, and Maciej Zdanowicz, \emph{Global {F}robenius
  liftability {I}}, J. Eur. Math. Soc. (JEMS) \textbf{23} (2021), no.~8,
  2601--2648. \MR{4269423}

\bibitem[Bog80]{Bogomolov:LAdicReps}
Fedor~Alekseivich Bogomolov, \emph{Sur l'alg\'ebricit\'e{} des
  repr\'esentations {$l$}-adiques}, C. R. Acad. Sci. Paris S\'er. A-B
  \textbf{290} (1980), no.~15, A701--A703. \MR{574307}

\bibitem[Bor11]{Bor11}
James Borger, \emph{The basic geometry of {W}itt vectors. {II}: {S}paces},
  Math. Ann. \textbf{351} (2011), no.~4, 877--933. \MR{2854117}

\bibitem[BPS23]{BPS23}
Alessandra Bertapelle, Emma Previato, and Arnab Saha, \emph{Arithmetic jet
  spaces}, J. Algebra \textbf{623} (2023), 127--153. \MR{4556144}

\bibitem[Bui96]{Buium:GeometrypJets}
Alexandru Buium, \emph{Geometry of {$p$}-jets}, Duke Math. J. \textbf{82}
  (1996), no.~2, 349--367. \MR{1387233}

\bibitem[Bui00]{Bui00}
\bysame, \emph{Continuous {$\pi$}-adic functions and {$\pi$}-derivations}, J.
  Number Theory \textbf{84} (2000), no.~1, 34--39. \MR{1782259}

\bibitem[Bui05]{Bui05}
\bysame, \emph{Arithmetic differential equations}, Mathematical Surveys and
  Monographs, vol. 118, American Mathematical Society, Providence, RI, 2005.
  \MR{2166202}

\bibitem[BV96]{BV96}
Alexandru Buium and Jos\'e{}~Felipe Voloch, \emph{Lang's conjecture in
  characteristic {$p$}: an explicit bound}, Compositio Math. \textbf{103}
  (1996), no.~1, 1--6. \MR{1404995}

\bibitem[Col87]{Col87}
Robert~F. Coleman, \emph{Ramified torsion points on curves}, Duke Math. J.
  \textbf{54} (1987), no.~2, 615--640. \MR{899407}

\bibitem[Deb01]{Debarrebook}
O.~Debarre, \emph{Higher-dimensional algebraic geometry}, Universitext,
  Springer-Verlag, New York, 2001. \MR{1841091 (2002g:14001)}

\bibitem[Deb05]{Debarre:VarietiesAmpleCotangent}
Olivier Debarre, \emph{Varieties with ample cotangent bundle}, Compos. Math.
  \textbf{141} (2005), no.~6, 1445--1459. \MR{2188444}

\bibitem[Deb13]{Debarre:CorrigendumVarietiesAmple}
\bysame, \emph{Corrigendum: {V}arieties with ample cotangent bundle
  ({C}ompositio {M}ath. 141 (2005), 1445--1459) [mr2188444]}, Compos. Math.
  \textbf{149} (2013), no.~3, 505--506. \MR{3040749}

\bibitem[DGH21]{DGH:Uniform}
Vesselin Dimitrov, Ziyang Gao, and Philipp Habegger, \emph{Uniformity in
  {M}ordell-{L}ang for curves}, Ann. of Math. (2) \textbf{194} (2021), no.~1,
  237--298. \MR{4276287}

\bibitem[DP07]{DavidPhilippon:Minorations}
Sinnou David and Patrice Philippon, \emph{Minorations des hauteurs
  normalis\'ees des sous-vari\'et\'es des puissances des courbes elliptiques},
  Int. Math. Res. Pap. IMRP (2007), no.~3, Art. ID rpm006, 113. \MR{2355454}

\bibitem[DP25]{DograPandit}
Netan Dogra and Sudip Pandit, \emph{A {B}uium--{C}oleman bound for the
  {M}ordell--{Lang} conjecture}, Preprint,arXiv:2504.10155v1 (April 14, 2025).

\bibitem[DZB19]{DupuyDZB:DeligneIlluse}
Taylor Dupuy and David Zureick-Brown, \emph{Deligne-{I}llusie classes as
  arithmetic {K}odaira-{S}pencer classes}, J. Th\'eor. Nombres Bordeaux
  \textbf{31} (2019), no.~2, 371--383. \MR{4030913}

\bibitem[Ful98]{Fulton:IntersectionTheory}
William Fulton, \emph{Intersection theory}, second ed., Ergebnisse der
  Mathematik und ihrer Grenzgebiete. 3. Folge. A Series of Modern Surveys in
  Mathematics [Results in Mathematics and Related Areas. 3rd Series. A Series
  of Modern Surveys in Mathematics], vol.~2, Springer-Verlag, Berlin, 1998.
  \MR{1644323}

\bibitem[GGK24]{GaoGeKuhne:UniformML}
Ziyang Gao, Tangli Ge, and Lars K{\"u}hne, \emph{The uniform {M}ordell--{L}ang
  conjecture}, Preprint,2105.15085v3 (June 26, 2024).

\bibitem[Har66]{Hartshorne:AmpleVectorBundle}
Robin Hartshorne, \emph{Ample vector bundles}, Inst. Hautes \'Etudes Sci. Publ.
  Math. (1966), no.~29, 63--94. \MR{193092}

\bibitem[Hru01]{Hrushovski:MM}
Ehud Hrushovski, \emph{The {M}anin-{M}umford conjecture and the model theory of
  difference fields}, Ann. Pure Appl. Logic \textbf{112} (2001), no.~1,
  43--115. \MR{1854232}

\bibitem[Ito06]{Ito:MMSupersingular}
Tetsushi Ito, \emph{On the {M}anin-{M}umford conjecture for abelian varieties
  with a prime of supersingular reduction}, Proc. Amer. Math. Soc. \textbf{134}
  (2006), no.~10, 2857--2860. \MR{2231608}

\bibitem[Jou83]{Jouanolou:Bertini}
Jean-Pierre Jouanolou, \emph{Th{\'e}oremes de {Bertini} et applications}, Prog.
  Math., vol.~42, Birkh{\"a}user, Cham, 1983 (French).

\bibitem[Kat76]{Katz:GaloisProperties}
Nicholas~M. Katz, \emph{Galois properties of torsion points on abelian
  varieties}, Inventiones Mathematicae \textbf{37} (1976), no.~2, 183--202.

\bibitem[K{\"u}h24]{Kuhne:Equidistribution}
Lars K{\"u}hne, \emph{Equidistribution in families of abelian varieties and
  uniformity}, Preprint, arXiv:2101.10272v4 (December 23, 2024).

\bibitem[Laz04]{Lazarsfeld:Positivity2}
Robert Lazarsfeld, \emph{Positivity in algebraic geometry. {II}}, Ergebnisse
  der Mathematik und ihrer Grenzgebiete. 3. Folge. A Series of Modern Surveys
  in Mathematics [Results in Mathematics and Related Areas. 3rd Series. A
  Series of Modern Surveys in Mathematics], vol.~49, Springer-Verlag, Berlin,
  2004, Positivity for vector bundles, and multiplier ideals. \MR{2095472}

\bibitem[MD84]{MartinDeschamps:DescentCotangentAmple}
Mireille Martin-Deschamps, \emph{Propri\'et\'es de descente des vari\'et\'es
  \`a fibr\'e cotangent ample}, Ann. Inst. Fourier (Grenoble) \textbf{34}
  (1984), no.~3, 39--64. \MR{762693}

\bibitem[Mum08]{MumAb}
D.~Mumford, \emph{Abelian varieties}, Tata Institute of Fundamental Research
  Studies in Mathematics, vol.~5, Published for the Tata Institute of
  Fundamental Research, Bombay; by Hindustan Book Agency, New Delhi, 2008.
  \MR{2514037 (2010e:14040)}

\bibitem[Poo01]{Poonen:2001:CTP}
Bjorn Poonen, \emph{Computing torsion points on curves}, Experimental
  Mathematics \textbf{10} (2001), no.~3, 449--466.

\bibitem[PR02]{PinkRoessler:HrushovskiMM}
Richard Pink and Damian Roessler, \emph{On {H}rushovski's proof of the
  {M}anin-{M}umford conjecture}, Proceedings of the {I}nternational {C}ongress
  of {M}athematicians, {V}ol. {I} ({B}eijing, 2002), Higher Ed. Press, Beijing,
  2002, pp.~539--546. \MR{1989204}

\bibitem[Ray83a]{Ray83b}
M.~Raynaud, \emph{Around the {M}ordell conjecture for function fields and a
  conjecture of {S}erge {L}ang}, Algebraic geometry ({T}okyo/{K}yoto, 1982),
  Lecture Notes in Math., vol. 1016, Springer, Berlin, 1983, pp.~1--19.
  \MR{726419}

\bibitem[Ray83b]{Raynaud:ManinMumford}
\bysame, \emph{Courbes sur une vari{\'e}t{\'e} ab{\'e}lienne et points de
  torsion}, Invent. Math. \textbf{71} (1983), no.~1, 207--233. \MR{688265}

\bibitem[Ray83c]{Raynaud:Torsion}
\bysame, \emph{Sous-vari\'et\'es d'une vari\'et\'e{} ab\'elienne et points de
  torsion}, Arithmetic and geometry, {V}ol. {I}, Progr. Math., vol.~35,
  Birkh\"auser Boston, Boston, MA, 1983, pp.~327--352. \MR{717600}

\bibitem[Roe05]{Roessler:NoteMM}
Damian Roessler, \emph{A note on the {M}anin-{M}umford conjecture}, Number
  fields and function fields---two parallel worlds, Progr. Math., vol. 239,
  Birkh\"auser Boston, Boston, MA, 2005, pp.~311--318. \MR{2176757}

\bibitem[Sca18]{Scarponi:Sparsity}
Danny Scarponi, \emph{Sparsity of {$p$}-divisible unramified liftings for
  subvarieties of abelian varieties with trivial stabilizer}, Algebra Number
  Theory \textbf{12} (2018), no.~2, 411--428. \MR{3803708}

\bibitem[{Sta}25]{stacks-project}
The {Stacks project authors}, \emph{The stacks project},
  \url{https://stacks.math.columbia.edu}, 2025.

\bibitem[Tan72]{Tango:ExtensionsVBFrobenius}
Hiroshi Tango, \emph{On the behavior of extensions of vector bundles under the
  {F}robenius map}, Nagoya Math. J. \textbf{48} (1972), 73--89. \MR{314851}

\bibitem[Tze00]{Tzermias:MMSurvey}
Pavlos Tzermias, \emph{The {M}anin-{M}umford conjecture: a brief survey}, Bull.
  London Math. Soc. \textbf{32} (2000), no.~6, 641--652. \MR{1781574}

\end{thebibliography}
\bibliographystyle{amsalpha}

\end{document}